\author{Vincent DEVINCK}
\newtheorem{Theo}{Theorem}[section]
\newtheorem{Lem}[Theo]{Lemma}
\newtheorem{Prop}[Theo]{Proposition}
\newtheorem{Cor}[Theo]{Corollary}
\newtheorem{Fac}[Theo]{Fact}
\theoremstyle{definition}
\newtheorem{Def}[Theo]{Definition}
\newtheorem{Ex}[Theo]{Example}
\newtheorem{Ass}[Theo]{Assumption}
\newtheorem{Mot}[Theo]{Motivation}
\newtheorem{Que}[Theo]{Question}
\newtheorem{Rem}[Theo]{Remark}
\numberwithin{equation}{section}
\date{}
\title{\textbf{Strongly mixing operators on Hilbert spaces and speed of mixing}}
\begin{document}
\maketitle

\begin{abstract}
\noindent We investigate the subject of speed of mixing for operators on infinite dimensional Hilbert spaces which are strongly mixing with respect to a nondegenerate Gaussian measure. We prove that there is no way to find a uniform speed of mixing for all square-integrable functions. We give classes of regular functions for which the sequence of correlations decreases to zero with speed $n^{-\alpha}$ when the eigenvectors associated to unimodular eigenvalues of the operator are parametrized by an $\alpha$-H\"olderian $\mathbb{T}$-eigenvector field.
\end{abstract}

\noindent Mathematics Subject Classification (2010) codes: 37A05, 37A25, 58D20, 65F15

\section{Introduction}

In this paper, we will be interested in the dynamics of a bounded linear operator $T$ acting on a complex separable infinite dimensional Hilbert space $\mathcal{H}$ from the measure-theoretic point of view, and more precisely in the strong mixing property of $T$. The study of strongly mixing operators on infinite dimensional spaces was begun in 2006 by Bayart and Grivaux in \cite{BG1} where they give conditions on $T$ so that $T$ admits a nondegenerate Gaussian measure $m$ with respect to which it defines a strongly mixing transformation on $(\mathcal{H},m)$. In a second work, they extended this result when the underlying space is a complex separable Banach space of type different from $2$ (see \cite{BG2}). The idea of studying a linear operator from the measure-theoretic point of view is that measurable dynamics and topological dynamics are connected: if an operator $T$ acting on some complex separable infinite dimensional Banach space $X$ turns out to be ergodic with respect to some nondegenerate measure, then the operator is hypercyclic, that is there exists a vector $x$ in the space $X$ such that the orbit $\mathcal{O}rb(x,T)=\{T^nx\,;\,n\ge 0\}$ of $x$ under the action of $T$ is dense in $X$. Building from the work in \cite{BG1} and \cite{BG2}, our aim is to study the convergence to zero of the correlations in the definition of the strong mixing property.

All the definitions on ergodic theory can be found in \cite{W}. The first important concept is that of measure-preserving transformation.

\begin{Def}
Let $(X,\mathcal{B},m)$ be a probability space. We say that a measurable map $T:(X,\mathcal{B},m) \longrightarrow (X,\mathcal{B},m)$ is a \textit{measure-preserving transformation} if for any measurable set $A$ in $\mathcal{B}$, we have $m(T^{-1}(A))=m(A)$.
\end{Def}

\noindent To begin with, we should recall the fundamental notion of ergodicity.

\begin{Def}
Let $(X,\mathcal{B},m)$ be a probability space. We say that a measure-preserving transformation $T: (X,\mathcal{B},m) \longrightarrow (X,\mathcal{B},m)$ is \textit{ergodic} if one of the two equivalent conditions is satisfied:\\
$(i)$ for every measurable set $A$ in $\mathcal{B}$, if $T^{-1}(A)=A$, then $m(A)=0$ or $m(A)=1$;\\
$(ii)$ for every functions $f,g$ in $L^2(X,\mathcal{B},m)$, 
$$
\frac{1}{N}\sum_{n=0}^{N-1}\int_X f(T^nx)g(x)\,dm(x)\underset{N \to +\infty}{\longrightarrow}\int_X f\,dm \int_X g\,dm
$$
\end{Def}

\noindent We now define the central notion of the paper which is a stronger notion than ergodicity.

\begin{Def}
Let $(X,\mathcal{B},m)$ be a probability space. We say that a measure-preserving transformation $T: (X,\mathcal{B},m) \longrightarrow (X,\mathcal{B},m)$ is \textit{strongly mixing} if one of the two equivalent conditions is satisfied:\\
$(i)$ for every measurable sets $A,B$ in $\mathcal{B}$, $\displaystyle m(T^{-n}(A)\cap B)\underset{n\to +\infty}{\longrightarrow} m(A)m(B)$;\\
$(ii)$ for every functions $f,g$ in $L^2(X,\mathcal{B},m)$, $\displaystyle \int_X f(T^nx)g(x)\,dm(x)\underset{n \to +\infty}{\longrightarrow}\int_X f\,dm \int_X g\,dm$.
\end{Def}
\noindent By the speed of mixing, we refer to the speed with which the \textit{correlation of order n between} $f$ \textit{and} $g$
\begin{equation}\label{expressioncorrelation}
\mathcal{I}_n(f,g):= \int_X f(T^nx)g(x)\,dm(x)-\int_X f\,dm \int_X g\,dm
\end{equation}
converges to zero as $n$ goes to infinity, where $f$ and $g$ belong to $L^2(X,\mathcal{B},m)$.

The fundamental notions of ergodicity, weak mixing (see the definition in \cite{BG1}) or strong mixing in ergodic theory are very studied when we deal with transformations on compact sets (for more on the compact setting, see \cite{KB}). The notion of speed of mixing when the transformation is strongly mixing on a compact set $K$ is systematically studied and the way of computation depends on the structure of the compact set $K$ (see for instance \cite{D1}, \cite{D2}, \cite{Y1} or \cite{Y2}). It is proved in these papers that, in the situations studied here, the sequences of correlations decrease to zero with exponential speed when dealing with some classes of regular functions.

Troughout this paper, $\mathcal{H}$ is a complex separable infinite dimensional Hilbert space, with Borel $\sigma$-algebra $\mathcal{B}$ generated by the bounded real linear functionals $\mathfrak{Re}\langle x,\cdot \rangle: \mathcal{H}\longrightarrow \mathbb{R}$, where $x$ is a vector of $\mathcal{H}$. It is equipped  with a scalar product $\langle u,v\rangle$ which is assumed to be linear with respect to $v$ and conjugate-linear with respect to $u$ and we adopt the convention that all the scalar products which appear in the paper have this property. The algebra of bounded linear operators on the Hilbert space $\mathcal{H}$ is denoted by $\mathcal{B}(\mathcal{H})$. Finally, $\mathbb{T}$ stands for the set of all complex numbers of modulus $1$ and $\mu$ is the normalized Lebesgue measure on $\mathbb{T}$, that is $d\mu=\frac{d\theta}{2\pi}$.

The starting point of our investigation here is Theorem $3.29$ of \cite{BG1} which says that for a bounded linear operator $T$ on $\mathcal{H}$ whose eigenvectors associated to unimodular eigenvalues are $\mu$-spanning (Definition \ref{eigenvector}), there exists a Gaussian measure $m$ on $\mathcal{H}$ (Definition \ref{gaussianmeasure}) for which $T:(\mathcal{H},\mathcal{B},m)\longrightarrow (\mathcal{H},\mathcal{B},m)$ is a strongly mixing transformation. It is the object of Section $2$ to recall the main definitions and ideas around this result which will be useful in our work. In particular, the parametrization of these eigenvectors by $\mathbb{T}$-eigenvector fields (Definition \ref{eigenvectorfield}) in a regular way (Assumption \ref{assumption}) gives us the first basic result about speed of mixing (Proposition \ref{functional}) which is the kind of result we would like to generalize to a broader class of functions than bounded linear functionals on $\mathcal{H}$. We also give examples of strongly mixing operators on Hilbert spaces where the $\mathbb{T}$-eigenvector field is directly given in a regular way (Examples $2.11$, $2.12$, $2.13$ and $2.14$) and where Assumption \ref{assumption} is satisfied. From the examples of $\alpha$-H\"olderian $\mathbb{T}$-eigenvector fields arises a natural question (Question \ref{question}): can we find a Hilbert space and a bounded linear operator on it which admits a $\mathbb{T}$-eigenvector field which is H\"olderian for a fixed H\"older exponent? We give a positive answer to it (Theorem \ref{reponse}). From now on, let $T$ denote a bounded linear operator on the Hilbert space $\mathcal{H}$ whose eigenvectors associated to unimodular eigenvalues are $\mu$-spanning (Definition \ref{eigenvector}) and are parametrized by a $\mathbb{T}$-eigenvector field $E$ (Definition \ref{eigenvectorfield}) which is $\alpha$-H\"olderian ($\alpha\in (0,1]$).

Our main problem is to study the convergence to zero of the sequence of correlations \eqref{expressioncorrelation} for any functions $f$ and $g$ in $L^2(\mathcal{H},\mathcal{B},m)$. A first natural question is to wonder if there exists a sequence of positive real numbers $(s_n)_{n\in\mathbb{N}}$ going to zero as $n$ goes to infinity such that 
\begin{equation}\label{vitesse}
\vert \mathcal{I}_n(f,g)\vert \le C_{f,g}\, s_n\ \ \ \ \ \mathrm{for\ any\ functions}\ f\ \mathrm{and}\ g\ \mathrm{in}\ L^2(\mathcal{H},\mathcal{B},m),
\end{equation}
where the positive constant $C_{f,g}$ only depends on $f$ and $g$. In this case, we will say that the correlation $\mathcal{I}_n(f,g)$ goes to zero with speed $s_n$.

\indent In Section $3$, we give an answer to the problem \eqref{vitesse} and we prove that there is no way to have a speed of mixing in the whole space $L^2(\mathcal{H},\mathcal{B},m)$ (Theorem \ref{non}).\\
\indent Sections $4$ and $5$ are devoted to study the problem \eqref{vitesse} for functions $f$ and $g$ which belong to some classes of regular functions in $L^2(\mathcal{H},\mathcal{B},m)$. In Section $4$, we explain a way of computation of the correlations by considering an orthogonal decomposition of the space $L^2(\mathcal{H},\mathcal{B},m)$ which is given by the theory of Fock spaces. We then make some smoothness assumption on our functions (condition \eqref{intfinite}) and we prove that the Fourier coefficients of such a function $f$ have an integral representation (Lemma \ref{coefficient})  which yields naturally a sequence of useful multilinear forms associated to the components of $f$ in its decomposition as Wiener chaos \eqref{expansion}. After proving several useful estimates involving these multilinear forms, we prove the main theorem of the paper (Theorem \ref{theoremefinal}) on the rate of mixing for some classes of regular functions. More precisely, we define two classes $\mathcal{X}$ and $\mathcal{Y}$ of regular functions in $L^2(\mathcal{H},\mathcal{B},m)$ having the property that for any $f\in \mathcal{X}$ and $g\in \mathcal{Y}$ there exists a positive constant $C_{f,g}$ such that for any $n\ge 1$, $\vert \mathcal{I}_n(f,g)\vert \le C_{f,g}\,n^{-\alpha}$. We finally give some applications of this result by exhibiting concrete functions which belong to our classes $\mathcal{X}$ and $\mathcal{Y}$.

\section{A known result about strongly mixing operators}
We present here a result which is for us the starting point for the question of speed of mixing. It gives a condition for an operator to admit a measure for which the operator is strongly mixing. Before stating the theorem, we need to recall some definitions and facts about Gaussian measures on complex Hilbert spaces. All the definitions and facts on Gaussian measures stated below can be found in one of the references \cite{BM} or \cite{J}. Even if the notion of Gaussian measure can naturally be developped in the Banach space setting, we will stay in the Hilbert case since our method to get a speed of mixing is purely Hilbertian (for the definitions in the Banach case, see for instance \cite{BM}). Furthermore, Bayart and Matheron proved in a recent paper (\cite{BM0}) that the result of this section we are going to present (Theorem \ref{mixing}) is also true when the space is not a Hilbert space with the weaker conclusion that the operator is a weakly mixing transformation.

\subsection{Gaussian measures on Hilbert spaces}
First, we have to introduce complex Gaussian distribution. For any $\sigma>0$, let us denote by $\gamma_\sigma$ the centred Gaussian measure on $\mathbb{R}$ with variance $\sigma^2$, that is
\begin{equation*}
d\gamma_\sigma=\frac{1}{\sigma\sqrt{2\pi}}\,e^{-t^2/2\sigma^2}\,dt.
\end{equation*}
\begin{Def}
Let $(\Omega, \mathcal{F}, \mathbb{P})$ be a probability space and $f : (\Omega,\mathcal{F},\mathbb{P}) \longrightarrow \mathbb{C}$ be a complex-valued measurable function. Then we say that $f$ has \textit{complex symmetric Gaussian distribution} if either $f$ is almost surely equal to zero or the real and imaginary parts $\mathfrak{Re} \, f$ and $\mathfrak{Im} \, f$ have independent centred Gaussian distribution with the same variance.
\end{Def}
\noindent In other words, a nonzero random variable $f$ has a complex Gaussian distribution if and only if its distribution is $\gamma_\sigma\otimes\gamma_\sigma$ for some $\sigma>0$.\\
\noindent It is important to note that if $f$ has complex symmetric Gaussian distribution then so does $\lambda f$ for any complex number $\lambda$. In particular, $f$ and $\lambda f$ have the same distribution when $\lambda$ is a complex number of modulus $1$.
\begin{Def}\label{gaussianmeasure}
A \textit{Gaussian measure} on $\mathcal{H}$ is a probability measure $m$ on $\mathcal{H}$ such that for every vector $x$ of $\mathcal{H}$, the bounded linear functional $\langle x,\cdot \rangle : y \longmapsto \langle x,y \rangle$ has complex symmetric Gaussian distribution when considered as a random variable on $(\mathcal{H},\mathcal{B},m)$.
\end{Def}
\noindent For a random variable $f:(\mathcal{H},\mathcal{B},m)\longrightarrow \mathbb{C}$, we denote by $\mathrm{var}_m(f)$ its variance with respect to $m$, that is
\begin{equation*}
\mathrm{var}_m(f):=\int_\mathcal{H}\vert f(z)\vert^2\,dm(z)-\bigg\vert \int_\mathcal{H} f(z)\,dm(z) \bigg\vert^2
\end{equation*}
and in the special case where $f=\mathfrak{Re}\langle x,\cdot \rangle$ is a bounded real linear functional, we denote the variance of $f$ by
\begin{equation*}
\sigma_x^2:=\int_\mathcal{H} (\mathfrak{Re}\langle x,z\rangle)^2\,dm(z).
\end{equation*}
\noindent A fundamental result is that a Gaussian measure $m$ on $\mathcal{H}$ has finite moments of all orders. In particular, the quantity
\begin{equation}\label{moment}
\int_{\mathcal{H}}\vert\vert x \vert\vert^2\,dm(x)
\end{equation}
is always finite.\\
In order to study the properties of a Gaussian measure $m$, we introduce its \textit{covariance operator} $R_m$ which is defined on $\mathcal{H}$ by the following equation:
\begin{equation*}
\langle R_m x,y\rangle = \int_{\mathcal{H}}\langle x,z\rangle\overline{\langle y,z\rangle} \,dm(z)
\end{equation*}
for every vectors $x,y$ of $\mathcal{H}$. According to \eqref{moment}, $R_m$ is a bounded linear operator which is self-adjoint and positive. Furthermore, $R_m$ is of trace class. In fact, the Gaussian measure $m$ is completely determined by its covariance operator $R_m$: if $R$ is a bounded linear self-adjoint positive operator which is of trace class, then there is a unique Gaussian measure on $\mathcal{H}$ whose covariance operator is $R$ (see \cite{BM}, Chapter $5$).\\
\noindent In the sequel, we need some factorization of the covariance operator $R$ of a Gaussian measure $m$.
Since $R$ is a positive operator, it admits a square root, that is there exists a \textit{unique} pair $(\tilde{\mathcal{H}},K)$ consisting in a separable Hilbert space $\tilde{\mathcal{H}}$ and a bounded linear operator $K: \tilde{\mathcal{H}}\longrightarrow \mathcal{H}$ such that $R=KK^{*}$. By the uniqueness we mean that if $\hat{\mathcal{H}}$ is another Hilbert space and $K_1 : \hat{\mathcal{H}}\to \mathcal{H}$ is such that $R=K_1K_1^{*}$, then there is an isometry $V: \tilde{H}\to \hat{\mathcal{H}}$ such that $K_1=KV^{*}$.

\subsection{The result}
It is now time to state the result which provides strongly mixing operators with respect to a Gaussian measure. We need to point out the main ideas of the proof which will be fundamental in the sequel. The condition for the operator to be strongly mixing with respect to a Gaussian measure is to have sufficiently many eigenvectors associated to unimodular eigenvalues. All the definitions and facts of the present section can be found in \cite{BG1}, \cite{BG2} (or the book \cite{BM} for a summary).
\begin{Def}\label{eigenvector}
Let $T\in \mathcal{B}(\mathcal{H})$. We say that the eigenvectors of $T$ associated to unimodular eigenvalues are $\mu$-\textit{spanning} if for every $\mu$-measurable subset $A$ of $\mathbb{T}$ such that $\mu(A)=1$, the eigenspaces $\text{Ker}(T-\lambda)$, $\lambda\in A$, span a dense subset of $\mathcal{H}$, where $\mu$ is the normalized Lebesgue measure on $\mathbb{T}$.
\end{Def}

\noindent The result that we are going to start with is the following (see for instance \cite{BG1}) and is due to Bayart and Grivaux.

\begin{Theo}\label{mixing}
Let $T\in\mathcal{B}(\mathcal{H})$. If the eigenvectors of $T$ associated to unimodular eigenvalues are $\mu$-spanning, then there exists a nondegenerate invariant Gaussian measure $m$ on $\mathcal{H}$ such that $T : (\mathcal{H},\mathcal{B},m)\longrightarrow (\mathcal{H},\mathcal{B},m)$ is a strongly mixing transformation.
\end{Theo}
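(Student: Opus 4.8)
The plan is to construct the Gaussian measure $m$ directly from the eigenvector data of $T$, following the Bayart–Grivaux strategy. First I would exploit the $\mu$-spanning hypothesis to produce a measurable field of eigenvectors: by a measurable selection argument, there is a $\mu$-measurable map $E:\mathbb{T}\longrightarrow\mathcal{H}$ with $T E(\lambda)=\lambda E(\lambda)$ for almost every $\lambda\in\mathbb{T}$, such that the closed linear span of $\{E(\lambda)\,;\,\lambda\in A\}$ is all of $\mathcal{H}$ for every full-measure set $A$; moreover one can arrange $E\in L^2(\mathbb{T},\mu;\mathcal{H})$, and even bounded, by rescaling each $E(\lambda)$. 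This is the object later formalized in the paper as a $\mathbb{T}$-eigenvector field. The point of having such a field is that it lets us transport the rotation dynamics on $\mathbb{T}$ into the dynamics of $T$ on $\mathcal{H}$.

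Next I would define the candidate measure as the distribution of the random vector
\begin{equation*}
X=\sum_{k\in\mathbb{Z}} a_k\, g_k\, E(\lambda_k)\quad\text{or, in continuous form,}\quad X=\int_{\mathbb{T}} \phi(\lambda)\,E(\lambda)\,d\xi(\lambda),
\end{equation*}
where $(g_k)$ are i.i.d.\ standard complex Gaussian variables (resp.\ $\xi$ is a complex Gaussian white noise on $(\mathbb{T},\mu)$) and the weights $a_k$ (resp.\ the weight function $\phi$) are chosen so that the series (resp.\ integral) converges in $L^2(\Omega;\mathcal{H})$, which forces the covariance operator $R_m=\int_{\mathbb{T}}|\phi(\lambda)|^2\,E(\lambda)\otimes E(\lambda)\,d\mu(\lambda)$ to be trace class. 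One then checks: (i) $m$ is Gaussian, since every $\langle x,\cdot\rangle$ becomes a complex Gaussian variable; (ii) $m$ is nondegenerate, i.e.\ $R_m$ is injective — this is exactly where the $\mu$-spanning property is used, because $R_m x=0$ would force $\langle x,E(\lambda)\rangle=0$ for a.e.\ $\lambda$, contradicting density of the span; (iii) $m$ is $T$-invariant, which follows from $T E(\lambda)=\lambda E(\lambda)$ together with rotation-invariance of $\mu$ (the change of variables $\lambda\mapsto \bar c\lambda$ after multiplying by a unimodular constant) — equivalently, $T R_m T^{*}=R_m$.

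Finally I would prove the strong mixing property via condition $(ii)$ of the definition. Because linear combinations of the functionals $\langle x,\cdot\rangle$ and their conjugates generate (after taking the closed algebra they span, which by the Gaussian structure is dense in $L^2(\mathcal{H},m)$) the whole space, it suffices to establish $\mathcal{I}_n(f,g)\to 0$ when $f,g$ are such exponentials/polynomials, and by the Gaussian integration-by-parts / Wick formula this reduces to the single estimate
\begin{equation*}
\int_{\mathcal{H}}\langle x,T^n z\rangle\,\overline{\langle y,z\rangle}\,dm(z)=\langle R_m x, T^n{}^{*}? \rangle \;=\; \int_{\mathbb{T}}\lambda^{n}\,\langle x,E(\lambda)\rangle\,\overline{\langle y,E(\lambda)\rangle}\,|\phi(\lambda)|^2\,d\mu(\lambda)\xrightarrow[n\to\infty]{}0,
\end{equation*}
which is precisely the statement that the Fourier coefficients of the $L^1(\mathbb{T},\mu)$ function $\lambda\mapsto\langle x,E(\lambda)\rangle\overline{\langle y,E(\lambda)\rangle}|\phi(\lambda)|^2$ tend to zero — the Riemann–Lebesgue lemma. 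The main obstacle, and the step deserving the most care, is the first one: producing a genuinely measurable (and suitably integrable) eigenvector field from the purely qualitative $\mu$-spanning hypothesis, and simultaneously choosing the weights so that $R_m$ is trace class while remaining injective. Everything after that is a matter of assembling standard facts about Gaussian measures on Hilbert spaces and invoking Riemann–Lebesgue; since the statement is quoted from \cite{BG1}, I would in practice cite that reference for the construction and emphasize only the three verifications (Gaussian, nondegenerate, invariant) and the Riemann–Lebesgue reduction, as these are the ingredients reused throughout the rest of the paper.
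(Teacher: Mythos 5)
Your proposal is correct and follows essentially the same route as the paper (and the Bayart--Grivaux source it cites): build the covariance operator $R=KK^{*}$ from the eigenvector field via $Kf=\int_{\mathbb{T}}f(\lambda)E(\lambda)\,d\mu(\lambda)$, check that $K$ is Hilbert--Schmidt with dense range and satisfies the intertwining $TK=KV$, and reduce strong mixing to the decay of $\langle RT^{*n}x,y\rangle=\widehat{\mu_{x,y}}(n)$, which follows from Riemann--Lebesgue since $\mu_{x,y}\ll\mu$. The only caveat is that your measurable-selection step in general yields a \emph{countable family} of $\mathbb{T}$-eigenvector fields rather than a single one (the paper's Fact \ref{parametrisation}); one then takes $\tilde{\mathcal{H}}=\bigoplus_i L^2(\mathbb{T},\mu)$ with suitable weights, exactly as in your weighted construction.
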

Since a Gaussian measure is completely determined by its covariance operator, the idea of the proof of this theorem is to construct directly the covariance operator $R$. Moreover, the positivity of a covariance operator shows that we only need to build its square root $K:\tilde{\mathcal{H}}\longrightarrow \mathcal{H}$, where $\tilde{\mathcal{H}}$ is a separable Hilbert space to determine. When $R$ is a covariance operator, we will denote by $m$ the probability measure associated to the operator $R$ (or equivalently to $K$). Recall that the measure $m$ is said to be \textit{nondegenerate} if $m(U)>0$ for every nonempty open subset $U$ of $\mathcal{H}$. Then, the following fact, which is proved in \cite{BM}, collects the main steps of the proof of Theorem \ref{mixing}.
\begin{Fac}\label{fact}
Let $T\in \mathcal{B}(\mathcal{H})$.\\
$(i)$ If $K$ is a Hilbert-Schmidt operator, then $m$ defines a Gaussian measure on $\mathcal{H}$ $($in fact, this is also a necessary condition$)$.\\
We now assume that $m$ is a Gaussian measure. Then we have:\\
$(ii)$ the probability measure $m$ is nondegenerate if and only if the operator $K$ has dense range;\\
$(iii)$ the probability measure $m$ is $T$-invariant if and only if there exists a co-isometry $V:\tilde{\mathcal{H}}\longrightarrow \tilde{\mathcal{H}}$ such that the intertwining equation 
\begin{equation}\label{intertwining}
TK=KV
\end{equation}
is satisfied;\\
$(iv)$ when the above conditions are realized, the operator $T$ is strongly mixing with respect to the Gaussian measure $m$ if and only if for every vectors $x, y$ in $\mathcal{H}$,
\begin{equation*}
\langle RT^{*n}x,y\rangle\underset{n\to+\infty}{\longrightarrow}0.
\end{equation*}
\end{Fac}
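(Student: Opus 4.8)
The four assertions all follow from the correspondence between Gaussian measures on $\mathcal{H}$ and positive self-adjoint trace-class operators recalled above, from the way covariance operators transform under a continuous linear push-forward, and --- for $(iv)$ --- from the density in $L^{2}(\mathcal{H},\mathcal{B},m)$ of a family of functions whose correlations can be computed explicitly. I would treat the four points in turn. For $(i)$, I would use the identity $\mathrm{tr}(KK^{*})=\Vert K\Vert_{HS}^{2}$: thus $K$ is Hilbert--Schmidt if and only if $R:=KK^{*}$ is a positive self-adjoint operator of trace class, and the existence of a unique Gaussian measure $m$ with covariance operator $R$ is then exactly the statement recalled above; the necessity is the already-quoted fact that the covariance operator of any Gaussian measure on $\mathcal{H}$ is of trace class.

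For $(ii)$, the "only if" direction is the remark that if $x$ is a unit vector orthogonal to $\mathrm{ran}\,K$ then $\sigma_{x}^{2}=\langle R_{m}x,x\rangle=\Vert K^{*}x\Vert^{2}=0$, so $\langle x,\cdot\rangle=0$ $m$-almost everywhere and $m$ is carried by the proper closed hyperplane $\{\,z\in\mathcal{H}:\langle x,z\rangle=0\,\}$, which has empty interior, contradicting nondegeneracy; for the "if" direction I would use that when $K$ has dense range every finite-dimensional marginal of $m$ is a nondegenerate finite-dimensional Gaussian, so that every ball --- hence every nonempty open set --- has positive measure. For $(iii)$, the key observation is that the push-forward $m\circ T^{-1}$ is again a Gaussian measure, since $\langle x,\cdot\rangle$ considered on $(\mathcal{H},\mathcal{B},m\circ T^{-1})$ has the same distribution as $\langle T^{*}x,\cdot\rangle$ on $(\mathcal{H},\mathcal{B},m)$, and a direct computation gives that its covariance operator is $TR_{m}T^{*}=(TK)(TK)^{*}$. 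Hence $T$-invariance of $m$ is equivalent to $(TK)(TK)^{*}=KK^{*}$, and applying the uniqueness of the square root recalled above to the two factorizations $K$ and $TK$ of $R$ turns this into the existence of a co-isometry $V:\tilde{\mathcal{H}}\to\tilde{\mathcal{H}}$ with $TK=KV$; the reverse implication is the one-line verification $TKK^{*}T^{*}=KVV^{*}K^{*}=KK^{*}$ using $VV^{*}=I$.

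For $(iv)$, since the expression $\int_{\mathcal{H}}f(T^{n}z)g(z)\,dm(z)$ is bilinear in $(f,g)$ and continuous on $L^{2}(\mathcal{H},\mathcal{B},m)\times L^{2}(\mathcal{H},\mathcal{B},m)$, it suffices to test the strong-mixing convergence on a total subset of $L^{2}(\mathcal{H},\mathcal{B},m)$. Taking $f=\langle x,\cdot\rangle$ and $g=\overline{\langle y,\cdot\rangle}$ and writing $\langle x,T^{n}z\rangle=\langle T^{*n}x,z\rangle$ yields
\begin{equation*}
\int_{\mathcal{H}}\langle x,T^{n}z\rangle\,\overline{\langle y,z\rangle}\,dm(z)=\langle R_{m}T^{*n}x,y\rangle=\langle RT^{*n}x,y\rangle,
\end{equation*}
and since Gaussian linear functionals are centred, $\int\langle x,\cdot\rangle\,dm\cdot\overline{\int\langle y,\cdot\rangle\,dm}=0$, which already gives the necessity of the stated condition. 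For sufficiency I would use as total family the bounded functions $f_{x}:=\exp\!\bigl(i\,\mathfrak{Re}\langle x,\cdot\rangle\bigr)$, $x\in\mathcal{H}$, whose linear span is dense in $L^{2}(\mathcal{H},\mathcal{B},m)$ because the Fourier transform of a finite complex measure on $\mathcal{H}$ determines it. For jointly Gaussian centred real random variables $U,V$ one has $\int e^{i(U+V)}\,dm=\exp\!\bigl(-\tfrac12(\mathrm{var}_{m}(U)+\mathrm{var}_{m}(V)+2\,\mathrm{cov}_{m}(U,V))\bigr)$; since $T$ is measure-preserving the variance of $\mathfrak{Re}\langle T^{*n}x,\cdot\rangle$ equals $\sigma_{x}^{2}$ for every $n$, while the covariance of $\mathfrak{Re}\langle T^{*n}x,\cdot\rangle$ with $\mathfrak{Re}\langle y,\cdot\rangle$ equals $\tfrac12\mathfrak{Re}\langle RT^{*n}x,y\rangle\to0$. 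Hence $\int_{\mathcal{H}}f_{x}(T^{n}z)f_{y}(z)\,dm(z)\to\exp(-\tfrac12\sigma_{x}^{2})\exp(-\tfrac12\sigma_{y}^{2})=\bigl(\int f_{x}\,dm\bigr)\bigl(\int f_{y}\,dm\bigr)$, and a density argument gives the strong mixing of $T$.

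The routine parts are $(i)$ together with the covariance manipulations in $(ii)$ and $(iii)$. The genuinely delicate step is the sufficiency direction of $(iv)$: passing from the convergence of the "first-chaos" correlations $\langle RT^{*n}x,y\rangle$ to the convergence of all $L^{2}$-correlations requires either producing an explicitly computable total family, as above, or invoking the orthogonality of the Wiener chaos decomposition --- so that cross-chaos contributions vanish and within-chaos ones become polynomial expressions in the first-chaos correlations --- and in either case the careful handling of the density argument and of the passage to the limit is where the real work lies.
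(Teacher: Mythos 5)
The paper does not prove this Fact at all: it is stated as a summary of results established in the reference \cite{BM}, so there is no in-paper argument to compare against. Your reconstruction follows the standard route of that reference --- trace-class/Hilbert--Schmidt correspondence for $(i)$, push-forward of the covariance operator plus uniqueness of the square-root factorization for $(iii)$, and, for the delicate sufficiency direction of $(iv)$, the totality in $L^2(\mathcal{H},\mathcal{B},m)$ of the exponentials $\exp(i\,\mathfrak{Re}\langle x,\cdot\rangle)$ together with the Gaussian characteristic-function computation; the uniform bound $\vert\mathcal{I}_n(f,g)\vert\le 2\Vert f\Vert_{2}\Vert g\Vert_{2}$ then lets the convergence pass from this total family to all of $L^2$. (An equivalent route, closer to the machinery the paper develops later, is to use the Wiener chaos decomposition and the fact that the $k$-th chaos correlations are polynomial in the first-chaos ones.) The only step that is genuinely too quick as written is the ``if'' direction of $(ii)$: nondegeneracy of all finite-dimensional marginals does not by itself give positive measure to every ball, since balls are not cylinder sets; one needs the standard description of the topological support of a Gaussian measure as $\overline{K(\tilde{\mathcal{H}})}$ (via $m(B(0,r))>0$ and the Cameron--Martin quasi-invariance along the dense subspace $K(\tilde{\mathcal{H}})$). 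With that supplement the argument is complete and consistent with the cited source.
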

With this result in hands, it remains to exhibit the operators $K : \tilde{\mathcal{H}}\longrightarrow \mathcal{H}$ and $V : \tilde{\mathcal{H}}\longrightarrow \tilde{\mathcal{H}}$ which make the intertwining equation \eqref{intertwining} true. They are defined by using the eigenvectors of $T$ associated to the unimodular eigenvalues. We first need to introduce some terminology.

\begin{Def}\label{eigenvectorfield}
Let $T\in \mathcal{B}(\mathcal{H})$. A bounded map $E:\mathbb{T}\longrightarrow \mathcal{H}$ such that $E(\lambda)$ belongs to $\text{Ker}(T-\lambda)$ for every $\lambda$ in $\mathbb{T}$ is called a $\mathbb{T}$-eigenvector field for $T$.
\end{Def}

The definition of the operator $K$ comes from a parametrization of the eigenvectors of $T$ associated to unimodular eigenvalues which is based on the following fact (see \cite{BG1}, Lemma $3.17$).
\begin{Fac}\label{parametrisation}
There exists a countable family $(E_i)_{i\in I}$ of $\mathbb{T}$-eigenvector fields for $T$ such that $\mathrm{Ker}(T-\lambda)=\overline{\mathrm{span}}^\mathcal{H}\big[E_i(\lambda)\, ;\, i\in I \big]$ for every $\lambda$ in $\mathbb{T}$. 
\end{Fac}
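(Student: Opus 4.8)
The plan is to produce the fields by pushing a fixed dense sequence of $\mathcal{H}$ through the orthogonal projections onto the eigenspaces, so that no measurable-selection machinery is needed. Since $\mathcal{H}$ is separable, fix a dense sequence $(f_k)_{k\ge 1}$ in $\mathcal{H}$ and, for each $\lambda\in\mathbb{T}$, let $P_\lambda\in\mathcal{B}(\mathcal{H})$ be the orthogonal projection onto the closed subspace $\mathrm{Ker}(T-\lambda)$ (which is $\{0\}$ for every $\lambda$ outside the unimodular point spectrum of $T$). Take $I=\mathbb{N}$ and set $E_k(\lambda):=P_\lambda f_k$. By construction $E_k(\lambda)\in\mathrm{Ker}(T-\lambda)$ for every $\lambda$, and since $P_\lambda$ is a contraction we get $\|E_k(\lambda)\|\le\|f_k\|$ for all $\lambda$; hence each $E_k$ is a bounded map $\mathbb{T}\to\mathcal{H}$, i.e.\ a $\mathbb{T}$-eigenvector field for $T$ in the sense of Definition \ref{eigenvectorfield}.

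It then remains only to check the spanning identity, for each fixed $\lambda$. The inclusion $\overline{\mathrm{span}}^{\mathcal{H}}\big[E_k(\lambda)\,;\,k\ge 1\big]\subseteq\mathrm{Ker}(T-\lambda)$ is immediate because the right-hand side is a closed subspace containing every $E_k(\lambda)$. For the reverse inclusion, take $v\in\mathrm{Ker}(T-\lambda)$ and $\varepsilon>0$; choosing $k$ with $\|f_k-v\|<\varepsilon$ gives $\|E_k(\lambda)-v\|=\|P_\lambda(f_k-v)\|\le\|f_k-v\|<\varepsilon$, using $P_\lambda v=v$. Thus $\{E_k(\lambda):k\ge 1\}$ is already dense in $\mathrm{Ker}(T-\lambda)$, so the two subspaces coincide, which is the claim.

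Finally, it is worth recording where the real content lies. Definition \ref{eigenvectorfield} asks only for boundedness, and for that the two paragraphs above form a complete argument with no genuine obstacle. However, the later construction of the operator $K$ needs the fields $E_k$ to be (Borel) measurable, and the construction above also delivers this: writing $A_\lambda:=(T-\lambda)^{*}(T-\lambda)\ge 0$, one has $\mathrm{Ker}(T-\lambda)=\mathrm{Ker}\,A_\lambda$, and by the bounded Borel functional calculus $P_\lambda=\lim_{\varepsilon\to 0^{+}}\varepsilon(A_\lambda+\varepsilon)^{-1}$ in the strong operator topology; since $\lambda\mapsto A_\lambda$ is norm-continuous on $\mathbb{T}$ (as $|\lambda|=1$) and operator inversion is norm-continuous, each map $\lambda\mapsto\varepsilon(A_\lambda+\varepsilon)^{-1}f_k$ is $\mathcal{H}$-norm-continuous, so $E_k$ is a pointwise limit of continuous maps and hence Borel. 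The one step here that goes beyond routine density arguments is the strong convergence $\varepsilon(A_\lambda+\varepsilon)^{-1}\to P_\lambda$, which I would obtain from the spectral theorem for $A_\lambda$ together with bounded convergence, since $\varepsilon/(t+\varepsilon)\to\mathbf{1}_{\{0\}}(t)$ pointwise and boundedly on $[0,\|A_\lambda\|]$.
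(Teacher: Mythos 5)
Your argument is correct. The paper does not prove this fact itself --- it simply cites Lemma $3.17$ of \cite{BG1} --- and your construction (projecting a fixed dense sequence onto the closed eigenspaces $\mathrm{Ker}(T-\lambda)$, with Borel measurability of $\lambda\mapsto P_\lambda f_k$ obtained from the strong limit $\varepsilon(A_\lambda+\varepsilon)^{-1}\to P_\lambda$ along a sequence $\varepsilon_n\downarrow 0$) is essentially the standard argument given in that reference, including the correct observation that measurability, not boundedness, is the only point requiring any work.
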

For our purpose, we can assume that we have a unique $\mathbb{T}$-eigenvector field $E$ for $T$ (see \cite{BM} for the general setting). Then, under the assumption that the eigenvectors of $T$ associated to the unimodular eigenvalues are $\mu$-spanning, the following operators $K:\tilde{\mathcal{H}}\longrightarrow \mathcal{H}$ and $V:\tilde{\mathcal{H}} \longrightarrow \tilde{\mathcal{H}}$ defined on the separable Hilbert space $\tilde{\mathcal{H}}=L^2(\mathbb{T},\mu)$ as
\begin{equation*}
Kf=\int_{\mathbb{T}}f(\lambda)E(\lambda)\,d\mu(\lambda)\ \ \ \text{and}\ \ \ Vf(\lambda)=\lambda f(\lambda)
\end{equation*}
satisfy Fact \ref{fact}. Moreover, the adjoint operator $K^{*} : \mathcal{H} \longrightarrow L^2(\mathbb{T},\mu)$ of $K$ is given by
\begin{equation*}
K^{*}x=\overline{\langle x,E(\cdot)\rangle}
\end{equation*}
for every vector $x$ of $\mathcal{H}$. In order to prove that $T$ is strongly mixing with respect to the Gaussian measure $m$ associated to $K$, it is crucial to see that $\langle RT^{*n}x,y\rangle$ is a Fourier coefficient. Indeed, it is proved in Lemma $3.23$ of \cite{BG1} that for every vectors $x, y$ of $\mathcal{H}$ we have the following integral representation:
\begin{equation}\label{representation}
\langle RT^{*n}x,y\rangle=\int_{\mathbb{T}}\lambda^{n}\langle x, E(\lambda)\rangle\overline{\langle y,E(\lambda)\rangle}\,d\mu(\lambda)=\widehat{\mu_{x,y}}(n)
\end{equation}
where 
\begin{equation*}
d\mu_{x,y}(\lambda)=\langle x, E(\lambda)\rangle\overline{\langle y, E(\lambda)\rangle}\,d\mu(\lambda),
\end{equation*}
which is a consequence of the intertwining equation \eqref{intertwining}.
It is obvious that the normalized Lebesgue measure on $\mathbb{T}$ is a Rajchman measure (that is the sequence $(\hat{\mu}(n))_{n\in \mathbb{Z}}$ of its Fourier coefficients tends to zero as $\vert n \vert$ goes to infinity). Since the measure $\mu_{x,y}$ is absolutely continuous with respect to $\mu$, it is also a Rajchman measure (see \cite{Ke}) and the strong mixing property follows.\\
We now want to give another proof of the convergence to zero of the sequence $(\langle RT^{*n}x,y\rangle)_{n\in \mathbb{N}}$ by making some regularity assumption on the $\mathbb{T}$-eigenvector field $E$. The additional assumption will provide a speed of convergence of this sequence and we will see in the examples that this condition of smoothness arises naturally.

\begin{Ass}\label{assumption}
There exists a real number $\alpha$ in $(0,1]$ such that the $\mathbb{T}$-eigenvector field $E$ is $\alpha$-H\"olderian, that is there exists a constant $C(E)>0$ such that
\begin{equation*}
\big\vert\big\vert E(e^{i\theta})-E(e^{i\theta'})\big\vert\big\vert\le C(E)\, \vert \theta-\theta' \vert^{\alpha}
\end{equation*}
for any $\theta, \theta'$ in $[0,2\pi)$.
\end{Ass}

The next result on the convergence to zero of the sequence of correlations $(\langle RT^{*n}x,y\rangle)_{n\in \mathbb{N}}$ explains the initial motivation of the paper.

\begin{Prop}\label{functional}
 There exists a positive constant $C(E,\alpha)$ which only depends on $E$ and $\alpha$ such that, for every vectors $x, y$ of $\mathcal{H}$, we have
\begin{equation*}
\vert \langle RT^{*n}x,y \rangle\vert \le \frac{C(E,\alpha)\,\vert\vert x \vert\vert \, \vert\vert y \vert\vert}{n^{\alpha}}
\end{equation*}
for any positive integer $n$.
\end{Prop}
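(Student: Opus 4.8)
The plan is to use the integral representation \eqref{representation}, which identifies $\langle RT^{*n}x,y\rangle$ with the $n$-th Fourier coefficient $\widehat{\mu_{x,y}}(n)$ of the complex measure with density $\phi_{x,y}(\lambda):=\langle x,E(\lambda)\rangle\,\overline{\langle y,E(\lambda)\rangle}$ with respect to $\mu$, and then to invoke the elementary fact that the Fourier coefficients of an $\alpha$-H\"olderian function on $\mathbb{T}$ decay like $n^{-\alpha}$, with a constant controlled by the H\"older seminorm. Thus the argument splits into two parts: upgrading the regularity of $E$ to regularity of $\phi_{x,y}$ with the correct dependence on $x$ and $y$, and the Fourier estimate itself.

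First I would record the relevant bounds on $\phi_{x,y}$. Writing $M:=\sup_{\lambda\in\mathbb{T}}\|E(\lambda)\|$, which is finite since $E$ is a bounded map, the Cauchy--Schwarz inequality gives $|\langle x,E(\lambda)\rangle|\le M\|x\|$ and, using Assumption \ref{assumption}, $|\langle x,E(e^{i\theta})\rangle-\langle x,E(e^{i\theta'})\rangle|\le C(E)\|x\|\,|\theta-\theta'|^{\alpha}$ for $\theta,\theta'\in[0,2\pi)$, with the analogous estimates for $y$. Writing $\phi_{x,y}(\lambda)-\phi_{x,y}(\lambda')=\langle x,E(\lambda)\rangle\bigl(\overline{\langle y,E(\lambda)\rangle}-\overline{\langle y,E(\lambda')\rangle}\bigr)+\overline{\langle y,E(\lambda')\rangle}\bigl(\langle x,E(\lambda)\rangle-\langle x,E(\lambda')\rangle\bigr)$ and inserting these bounds yields $|\phi_{x,y}(e^{i\theta})-\phi_{x,y}(e^{i\theta'})|\le 2MC(E)\|x\|\,\|y\|\,|\theta-\theta'|^{\alpha}$ together with $\|\phi_{x,y}\|_{\infty}\le M^2\|x\|\,\|y\|$.

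Next I would estimate $\widehat{\mu_{x,y}}(n)=\int_0^{2\pi}\phi_{x,y}(e^{i\theta})e^{-in\theta}\,\frac{d\theta}{2\pi}$ by the classical translation trick: replacing $\theta$ by $\theta+\pi/n$ and using $2\pi$-periodicity multiplies $e^{-in\theta}$ by $-1$, so averaging the two expressions gives $\widehat{\mu_{x,y}}(n)=\tfrac12\int_0^{2\pi}\bigl(\phi_{x,y}(e^{i\theta})-\phi_{x,y}(e^{i(\theta+\pi/n)})\bigr)e^{-in\theta}\,\frac{d\theta}{2\pi}$, hence $|\widehat{\mu_{x,y}}(n)|\le\tfrac12\int_0^{2\pi}|\phi_{x,y}(e^{i\theta})-\phi_{x,y}(e^{i(\theta+\pi/n)})|\,\frac{d\theta}{2\pi}$. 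For those $\theta$ with $\theta+\pi/n<2\pi$ the H\"older bound from the previous step controls the integrand by $2MC(E)\|x\|\,\|y\|\,(\pi/n)^{\alpha}$; on the remaining arc of length $\pi/n$, where $\theta+\pi/n$ wraps past $2\pi$, I would simply bound the integrand by $2\|\phi_{x,y}\|_{\infty}$, which contributes at most a constant multiple of $\|x\|\,\|y\|/n\le\|x\|\,\|y\|/n^{\alpha}$ since $\alpha\le 1$. Summing the two contributions gives the stated inequality with, for instance, $C(E,\alpha)=MC(E)\pi^{\alpha}+M^2/2$, which depends only on $E$ and $\alpha$.

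The only point requiring a little care is precisely this wrap-around at the seam $\theta\equiv 0\pmod{2\pi}$: Assumption \ref{assumption} is stated for $\theta,\theta'\in[0,2\pi)$ and does not by itself control $\|E(e^{i\theta})-E(e^{i\theta'})\|$ by the arc-length distance near the endpoints, so one cannot naively extend it $2\pi$-periodically. Treating the short arc of length $\pi/n$ by the crude $L^{\infty}$ bound, as above, circumvents this at the cost of an extra term of the same order $n^{-\alpha}$; alternatively one observes that the stated condition is, up to enlarging $C(E)$, equivalent to $\alpha$-H\"olderness with respect to the circle metric, after which periodic extension is legitimate. Apart from this, every step is a routine estimate.
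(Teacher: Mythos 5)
Your proof is correct and follows essentially the same route as the paper's: the integral representation \eqref{representation} combined with the classical $\theta\mapsto\theta+\pi/n$ translation trick and the $\alpha$-H\"older estimate for the product $\langle x,E(\cdot)\rangle\overline{\langle y,E(\cdot)\rangle}$. Your version is in fact slightly more careful than the paper's, which omits the factor $M=\sup_{\lambda\in\mathbb{T}}\Vert E(\lambda)\Vert$ from the H\"older constant of $f_{x,y}$ and does not address the wrap-around at the seam of $[0,2\pi)$.
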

\begin{proof}
To do this, we use a very classical argument which can be found in \cite{K} (Chapter $1$). 
For $x,y\in \mathcal{H}$, we consider the function $f_{x,y}:\theta\longmapsto \langle x,E(e^{i\theta})\rangle\overline{\langle y,E(e^{i\theta})\rangle}$. By a change of variable in the integral representation \eqref{representation}, we find that
\begin{equation*}
\langle RT^{*n}x,y \rangle=\frac{1}{2}\int_0^{2\pi}\Big(f_{x,y}(\theta)-f_{x,y}\Big(\theta+\frac{\pi}{n}\Big)\Big)e^{in\theta}\,\frac{d\theta}{2\pi}.
\end{equation*}
Since $E$ is $\alpha$-H\"olderian with H\"older constant $C(E)$, the function $f_{x,y}$ is also $\alpha$-H\"olderian and \begin{equation*}
\vert\vert f_{x,y}(\theta)-f_{x,y}(\theta')\vert\vert\le 2\,C(E)\,\vert\vert x\vert\vert\,\vert\vert y\vert\vert\,\vert \theta-\theta'\vert^\alpha
\end{equation*}
for any $\theta, \theta'$ in $[0,2\pi)$. Then the conclusion easily follows with the constant $C(E,\alpha):=C(E)\,\pi^{\alpha}$.
\end{proof}

\begin{Mot}
This proposition shows that, by taking $f=\langle x,\cdot\rangle$ and $g=\overline{\langle y,\cdot\rangle}$, the sequence of correlations $(\mathcal{I}_n(f,g))_{n\in \mathbb{N}}$ goes to zero with speed $n^{-\alpha}$ since
\begin{equation*}
\big\vert \mathcal{I}_n(f,g) \big\vert \le \frac{C(E,\alpha)\,\vert\vert x \vert\vert\, \vert\vert y \vert\vert}{n^{\alpha}}
\end{equation*}
for any positive integer $n$. Thus, it is natural to wonder if there exists a uniform rate of mixing in $L^2(\mathcal{H},\mathcal{B},m)$. For instance, have we got a convergence to zero of the sequence of correlations $(\mathcal{I}_n(f,g))_{n\in \mathbb{N}}$ with speed $n^{-\alpha}$ for any functions $f,g$ in $L^2(\mathcal{H},\mathcal{B},m)$?
\end{Mot}

\subsection{Examples}
In order to illustrate the previous subsection, let us give examples which show that the $\mu$-spanning condition is rather easy to check in general. In practice, the $\mathbb{T}$-eigenvector field is usually directly given with a regular parametrization. We will also construct a bounded linear operator on a Hilbert space which admits an $\alpha$-H\"olderian $\mathbb{T}$-eigenvector field for a fixed H\"older exponent $\alpha$. We denote by $(e_n)_{n\ge 0}$ the canonical basis of $\ell_2(\mathbb{Z}_+)$.

\begin{Ex}\label{analytic}
Let $w$ be a complex number such that $\vert w \vert>1$ and $B$ be the classical backward shift on $\ell_2(\mathbb{Z}_+)$, that is $Be_0=0$ and $Be_n=e_{n-1}$ when $n$ is a positive integer. The eigenvectors of $w B$ associated to the unimodular eigenvalues are $E(\lambda):=\sum_{n\ge 0}\big(\frac{\lambda}{w}\big)^ne_n$, where $\lambda$ belongs to $\mathbb{T}$. It is proved in \cite{BG1} (Example $3.3$) that $(E(\lambda))_{\lambda\in \mathbb{T}}$ is $\mu$-spanning. Furthermore, the map $E:\mathbb{T}\longrightarrow \ell_2(\mathbb{Z}_+)$ is a $\mathbb{T}$-eigenvector field for $wB$ which is a vector-valued analytic function.
\end{Ex}

\begin{Ex}
We deal with an operator which has been introduced by Kalisch in \cite{Kal}: it is the so-called \textit{Kalisch-type operator} $T$ defined on $L^2([0,2\pi])$ by the formula
\begin{equation*}
Tf(\theta)=e^{i\theta}f(\theta)-\int_{0}^{\theta}ie^{it}f(t)\,dt
\end{equation*}
for any $f$ in $L^2([0,2\pi])$.
For any $\alpha$ in $[0,2\pi)$, let $E(e^{i\alpha}):=\mathbf{1}_{(\alpha,2\pi)}$. Then $E(e^{i\alpha})$ is an eigenvector of $T$ associated to the unimodular eigenvalue $e^{i\alpha}$ and it is proved in \cite{BG2} (Example $3.11$) that $(E(e^{i\alpha}))_{\alpha\in [0,2\pi)}$ is $\mu$-spanning. In particular, the map $E:\mathbb{T}\longrightarrow L^2([0,2\pi])$ is a $\mathbb{T}$-eigenvector field for $T$ which is $\frac{1}{2}$-H\"olderian since for every $0\le \alpha < \beta <2\pi$,
\begin{equation*}
\big\vert\big\vert E(e^{i\alpha})-E(e^{i\beta})\big\vert\big\vert_2 =\bigg(\int_{0}^{2\pi}\big\vert E(e^{i\alpha})(\theta)-E(e^{i\beta})(\theta)\big\vert^2\,\frac{d\theta}{2\pi}\bigg)^{1/2} = (\beta-\alpha)^{1/2}.
\end{equation*}
\end{Ex}

\begin{Ex}\label{example}
For a bounded sequence of positive real numbers $\textbf{w}=(w_n)_{n\ge 1}$, we define the weighted backward shift $B_\textbf{w}$ on $\ell_2(\mathbb{Z}_+)$ by $B_\textbf{w} e_0=0$ and $B_\textbf{w} e_n=w_n e_{n-1}$ when $n$ is a positive integer. For the particular sequence $\textbf{w}$ where $w_1=1$ and $w_n=\frac{n}{n-1}$ when $n\ge 2$, the weighted backward shift $B_\textbf{w}$ admits a $\mu$-spanning $\mathbb{T}$-eigenvector field which is $\frac{1}{2}$-H\"olderian.
\end{Ex}

\begin{proof}
It is already known from \cite{BG1} (Example $3.21$) that the $\mathbb{T}$-eigenvector field $E: \mathbb{T}\longrightarrow \ell_2(\mathbb{Z}_+)$ which is defined by
\begin{equation*}
E(\lambda):=\sum_{n\ge 0}\frac{\lambda^n}{w_1\dots w_n}e_n=e_0+\sum_{n\ge 1}\frac{\lambda^n}{n}e_n
\end{equation*}
is $\mu$-spanning. Now, for every $\lambda, \xi$ in $\mathbb{T}$, we have
\begin{equation*}
\vert\vert E(\lambda)-E(\xi)\vert\vert_2^2=\sum_{n\ge 1}\frac{\vert \lambda^n-\xi^n\vert^2}{n^2}=\sum_{n\ge 1}\frac{\vert  (\lambda \overline{\xi})^n-1\vert^2}{n^2}\cdot
\end{equation*}
Then, if we introduce the $2\pi$-periodic function $f: [0,2\pi[ \longrightarrow \mathbb{R}$ by setting
\begin{equation*}
f(\theta):=\sum_{n\ge 1}\frac{\vert e^{in\theta}-1 \vert^2}{n^2}=2\bigg(\frac{\pi^2}{6}-\sum_{n\ge 1}\frac{\cos (n\theta)}{n^2}\bigg),
\end{equation*}
we can check that $f(\theta)=\pi \theta-\frac{\theta^2}{2}$ for every $\theta$ in $[0,2\pi[$. In particular, $f(\theta)$ behaves like $\pi \theta$ when $\theta$ is closed to zero. We deduce from this that $f$ is a Lipschitz function and then that $E$ is $\frac{1}{2}$-H\"olderian.
\end{proof}

\begin{Ex}\label{lipschitz}
In the same way as in the previous example, we consider the weighted sequence $\textbf{w}$ such that $w_1=1$ and $w_n=\big(\frac{n}{n-1}\big)^{\kappa}$ for $n\ge 2$, where $\kappa >\frac{3}{2}$. The weighted backward shift $B_\textbf{w}$ admits a $\mu$-spanning $\mathbb{T}$-eigenvector field which is a Lipschitz function.
\end{Ex}

\begin{proof}
We define the $\mathbb{T}$-eigenvector field $E$ for $B_\textbf{w}$ as in the previous proof:
\begin{equation*}
E(\lambda):=\sum_{n\ge 0}\frac{\lambda^n}{w_1\dots w_n}e_n=e_0+\sum_{n\ge 1}\frac{\lambda^n}{n^{\kappa}}
\end{equation*}
and it is already known from \cite{BG1} (Example $3.21$) that $E$ is $\mu$-spanning. Moreover, for every $\lambda, \xi$ in $\mathbb{T}$, we have
\begin{eqnarray*}
\vert\vert E(\lambda)-E(\xi) \vert\vert_2^2&=&\sum_{n\ge 1}\frac{\vert \lambda^n-\xi^n\vert^2}{n^{2\kappa}}=\vert\lambda-\xi\vert^2\sum_{n \ge 1}\frac{\big\vert\lambda^{n-1}\xi^0+\lambda^{n-2}\xi^1+\dots+\lambda^{0}\xi^{n-1}\big\vert^2}{n^{2\kappa}}\\
&\le & \bigg(\sum_{n\ge 1}\frac{1}{n^{2\kappa-2}}\bigg)\vert\lambda-\xi\vert^2
\end{eqnarray*}
and we conclude that $E$ is a Lipschitz function since the series $\sum_{n\ge 1}n^{-2\kappa+2}$ is convergent by definition of $\kappa$.
\end{proof}
\noindent From the above examples arises the next natural question.

\begin{Que}\label{question}
If $\alpha$ is any fixed number in $(0,1]$, can we find a separable complex Hilbert space and a bounded linear operator on this space which admits a $\mu$-spanning $\mathbb{T}$-eigenvector field which is \textit{exactly} $\alpha$-H\"olderian (that is $\alpha$-H\"olderian and not $\beta$-H\"olderian for any $\beta>\alpha$)?
\end{Que}

\noindent Refining Example \ref{lipschitz}, we can solve this problem. This positive answer is essentially due to the referee (originally, it was a partial answer).

\begin{Theo}\label{reponse}
Let $\alpha$ be a real number in $(0,1]$. There exists a sequence of positive real numbers $\textbf{w}=(w_n)_{n\ge 1}$ such that the weighted backward shift $B_\textbf{w}\in \mathcal{B}(\ell_2(\mathbb{Z}_+))$ admits a $\mathbb{T}$-eigenvector field which is exactly $\alpha$-H\"olderian and $\mu$-spanning.
\end{Theo}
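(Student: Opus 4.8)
The plan is to build on the computation in Example \ref{lipschitz} and Example \ref{example}, where the square of the modulus of continuity of a $\mathbb{T}$-eigenvector field of the form $E(\lambda)=\sum_{n\ge 0}c_n\lambda^n e_n$ is expressed through a trigonometric series. Concretely, take $E(\lambda)=e_0+\sum_{n\ge 1}c_n\lambda^n e_n$ with $c_n>0$ and $\sum_{n\ge 1}c_n^2<\infty$, which is a $\mathbb{T}$-eigenvector field for the weighted shift $B_{\mathbf w}$ with $w_1=1$ and $w_n=c_{n-1}/c_n$ for $n\ge 2$; the $\mu$-spanning property follows exactly as in \cite{BG1} (Example $3.21$) since $\sum c_n^2=\infty$ is not needed there — what is needed is that the reciprocals of the weight products, namely $c_n$, are square-summable, and that they do not decay too fast (I would pick $c_n$ comparable to $n^{-(1+\alpha)/2}$ up to slowly varying corrections, so that the relevant Fourier series has exactly the right regularity). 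The point is that
\begin{equation*}
\big\Vert E(\lambda)-E(\xi)\big\Vert_2^2=\sum_{n\ge 1}c_n^2\,\big\vert (\lambda\overline\xi)^n-1\big\vert^2=g(\theta),\qquad \lambda\overline\xi=e^{i\theta},
\end{equation*}
where $g(\theta)=2\sum_{n\ge 1}c_n^2\big(1-\cos(n\theta)\big)$ is an even $2\pi$-periodic function, and $E$ is exactly $\alpha$-H\"olderian if and only if $g(\theta)\asymp\vert\theta\vert^{2\alpha}$ as $\theta\to 0$ (and $g$ is bounded, which is automatic). So the whole problem reduces to choosing the nonnegative coefficients $a_n:=c_n^2$ with $\sum a_n<\infty$ so that $\sum_{n\ge 1}a_n(1-\cos n\theta)$ behaves exactly like $\vert\theta\vert^{2\alpha}$ near $0$.

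The key step is therefore a purely classical Fourier-analytic one: produce a summable sequence $(a_n)$ of nonnegative reals whose cosine series $\sum a_n(1-\cos n\theta)$ is two-sided comparable to $\vert\theta\vert^{2\alpha}$ near the origin. For $\alpha\in(0,1)$ the natural choice is $a_n\asymp n^{-1-2\alpha}$: the standard fact (Zygmund, Chapter V, on fractional integration of Fourier series, or a direct summation-by-parts estimate splitting the sum at $n\sim 1/\vert\theta\vert$) gives $\sum_{n\ge 1}n^{-1-2\alpha}(1-\cos n\theta)\asymp\vert\theta\vert^{2\alpha}$, with both an upper and a matching lower bound; here $2\alpha<2$ is exactly what makes the series diverge in a way that produces the $\vert\theta\vert^{2\alpha}$ singularity rather than a smoother behaviour, and $2\alpha>0$ is what makes $\sum a_n$ finite after the $1-\cos$ is taken into account — actually $\sum a_n<\infty$ directly since $1+2\alpha>1$. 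For the borderline case $\alpha=1$ the choice $a_n\asymp n^{-3}$ would give $g(\theta)\asymp\theta^2$ which is only "exactly Lipschitz" in the weak sense; to get a genuinely exactly $1$-H\"olderian (Lipschitz but not $\beta$-H\"olderian for $\beta>1$, which for $\beta>1$ forces constancy) field one simply notes that any nonconstant Lipschitz $E$ is automatically not $\beta$-H\"olderian for $\beta>1$, so Example \ref{lipschitz} already settles $\alpha=1$; alternatively take $a_n\asymp n^{-3}$ and check $g(\theta)=\Theta(\theta^2)$ directly (e.g. $a_n=n^{-3}$ gives $g$ a $C^1$ function with $g''(0^+)>0$).

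I would then assemble the argument in this order: (1) fix $a_n$ as above and set $c_n=\sqrt{a_n}$, $w_1=1$, $w_n=c_{n-1}/c_n$ for $n\ge2$, and check $\mathbf w$ is bounded (for $a_n=n^{-1-2\alpha}$ one gets $w_n=((n)/(n-1))^{(1+2\alpha)/2}\to 1$, so bounded, as in Examples \ref{example} and \ref{lipschitz}); (2) invoke \cite{BG1}, Example $3.21$, to get that $E(\lambda)=\sum_{n\ge0}\lambda^n(w_1\cdots w_n)^{-1}e_n=e_0+\sum_{n\ge1}c_n\lambda^n e_n$ is a $\mu$-spanning $\mathbb{T}$-eigenvector field for $B_{\mathbf w}$ (this uses only $\sum c_n^2<\infty$, i.e. $1+2\alpha>1$); (3) record the identity $\Vert E(\lambda)-E(\xi)\Vert_2^2=g(\theta)$ with $\lambda\overline\xi=e^{i\theta}$; (4) prove the two-sided estimate $g(\theta)\asymp\vert\theta\vert^{2\alpha}$ near $0$ — this is the analytic heart — and deduce on one hand $\Vert E(\lambda)-E(\xi)\Vert_2\le C(E)\vert\theta\vert^{\alpha}$ for all $\theta$ (extending from near $0$ to all of $[0,2\pi)$ using boundedness of $g$), so $E$ is $\alpha$-H\"olderian, and on the other hand, if $E$ were $\beta$-H\"olderian with $\beta>\alpha$ then $g(\theta)\le C\vert\theta\vert^{2\beta}=o(\vert\theta\vert^{2\alpha})$, contradicting the lower bound, so $E$ is exactly $\alpha$-H\"olderian. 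The main obstacle is step (4), and specifically the lower bound $g(\theta)\ge c\vert\theta\vert^{2\alpha}$: the upper bound is a routine split-at-$1/\vert\theta\vert$ plus Abel summation, but the lower bound needs care that the oscillatory tail $\sum_{n>1/\vert\theta\vert}a_n\cos n\theta$ does not cancel the main term $\sum_{n\le 1/\vert\theta\vert}a_n(1-\cos n\theta)\asymp\vert\theta\vert^{2}\sum_{n\le1/\vert\theta\vert}n^{2}a_n\asymp\vert\theta\vert^{2}\cdot\vert\theta\vert^{2\alpha-2}=\vert\theta\vert^{2\alpha}$; one controls the tail by Abel summation, using monotonicity of $a_n$, to bound it by a constant times $\vert\theta\vert^{-1}a_{\lceil 1/\vert\theta\vert\rceil}\asymp\vert\theta\vert^{2\alpha}$ with a constant that can be absorbed, or more cleanly by choosing the implied constants in $a_n\asymp n^{-1-2\alpha}$ so that the main term dominates. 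This is exactly the kind of estimate that appears in the theory of lacunary-free Fourier series with prescribed smoothness, and it is where I expect the referee's refinement of Example \ref{lipschitz} to have done its work.
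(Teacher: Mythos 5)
Your proposal is correct and follows essentially the same route as the paper: the same weights $w_n=(n/(n-1))^{\kappa}$ with $\kappa=\alpha+\tfrac12$ (so $c_n^2=n^{-1-2\alpha}$), the same reduction of $\Vert E(\lambda)-E(\xi)\Vert^2$ to the nonnegative trigonometric sum $\sum_n c_n^2\vert(\lambda\overline{\xi})^n-1\vert^2$, the same split at $n\sim 1/\vert\theta\vert$ for the upper bound, and a lower bound obtained by keeping only a block of indices near $1/\vert\theta\vert$ (the paper uses $n\in\{N,\dots,3N/2\}$; your worry about an oscillatory tail is moot since every summand is nonnegative). Only note that your first mention of ``$c_n$ comparable to $n^{-(1+\alpha)/2}$'' is a slip for $n^{-(1+2\alpha)/2}$, which is what you correctly use thereafter.
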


\begin{proof}
For $\alpha=1$, the problem has already been studied in Example \ref{lipschitz}. Let $\alpha \in (0,1)$ and $\kappa=\alpha+\frac{1}{2}.$ Our goal is to prove that there exists a weighted backward shift on $\ell_2(\mathbb{Z}_+)$ which admits a $\mathbb{T}$-eigenvector field which is exactly $\alpha$-H\"olderian and $\mu$-spanning. As in Example \ref{lipschitz}, we consider the weighted sequence $\mathbf{w}$ such that $w_1=1$ and $w_n=\big(\frac{n}{n-1}\big)^{\kappa}$ for $n\ge 2$ and we define the $\mathbb{T}$-eigenvector field $E$ for $B_\textbf{w}$ by setting
$$
E(\lambda):=\sum_{n\ge 0}\frac{\lambda^n}{w_1\dots w_n}e_n=e_0+\sum_{n\ge 1}\frac{\lambda^n}{n^{\kappa}}\cdot
$$
It is already known from \cite{BG1} (Example $3.21$) that $E$ is $\mu$-spanning. We prove that $E$ is not better than $\alpha$-H\"olderian. Let $N$ be an even positive integer and $\lambda_N=e^{i\pi/N}$. Then there exists a positive constant $C$ (which not depends on $N$) such that $\vert \lambda_N^k-1\vert\ge C>0$ for any $k\in \{N,\dots, 3N/2\}$. Then we have
\begin{align*}
\vert\vert E(\lambda_N)-E(1)\vert\vert^2_2 &\ge \sum_{n=N}^{3N/2}\frac{\vert \lambda_N^n -1\vert^2}{n^{2\kappa}}\\
&\ge C^2\sum_{n=N}^{3N/2}\frac{1}{n^{2\alpha+1}}\\
&\ge C^2\int_{N}^{3N/2+1}\frac{dt}{t^{2\alpha+1}}=\frac{C^2}{2\alpha}\bigg(\frac{1}{N^{2\alpha}}-\frac{1}{(3N/2+1)^{2\alpha}}\bigg)
\end{align*}
and we conclude that there exists $C_\alpha>0$ (which only depends on $\alpha$) such that
$$
\vert\vert E(\lambda_N)-E(1)\vert\vert^2_2\ge \frac{C_\alpha}{N^{2\alpha}}\ge \frac{C_\alpha}{2^\alpha \pi^{2\alpha}}\vert \lambda_N-1\vert^{2\alpha}.
$$
This proves that the $\mathbb{T}$-eigenvector field $E$ is not better than $\alpha$-H\"olderian. We now prove that $E$ is $\alpha$-H\"olderian. We just need to prove that $E$ is $\alpha$-H\"olderian at $1$ (since $\vert \lambda^n-\xi^n\vert=\vert (\lambda\overline{\xi})^n-1\vert$ for every $\lambda,\xi\in \mathbb{T}$). We write $\lfloor x\rfloor$ to denote the integer part of the real number $x$. Let $\lambda\in \mathbb{T}\setminus\{1\}$ (close to $1$) and we put $N=\Big\lfloor \frac{1}{\vert \lambda-1\vert}\Big\rfloor$. Then we have
\begin{align*}
\vert\vert E(\lambda)-E(1)\vert\vert^2 &=\sum_{n=1}^N\frac{\vert \lambda^n-1\vert^2}{n^{2\alpha+1}}+\sum_{n=N+1}^{+\infty}\frac{\vert \lambda^n-1\vert^2}{n^{2\alpha+1}}\\
&:=\mathcal{S}_1(\lambda)+\mathcal{S}_2(\lambda)
\end{align*}
where we denote the first sum by $\mathcal{S}_1(\lambda)$ and the second one by $\mathcal{S}_2(\lambda)$. We estimate $\mathcal{S}_1(\lambda)$ by using the inequality $\vert \lambda^n-1\vert\le n\vert \lambda-1\vert$:
\begin{align*}
\mathcal{S}_1(\lambda)\le \vert \lambda-1\vert^2\sum_{n=1}^N \frac{1}{n^{2\alpha-1}}
\end{align*}
and 
\begin{align*}
\sum_{n=1}^N \frac{1}{n^{2\alpha-1}}&=(1-2\alpha)\sum_{n=1}^N\int_1^n\frac{dt}{t^{2\alpha}}+N\\
&=(1-2\alpha)\int_1^N\bigg(\sum_{t<n\le N}1\bigg)\frac{dt}{t^{2\alpha}}+N\\
&=(1-2\alpha)\int_1^N (N-\lfloor t\rfloor)\frac{dt}{t^{2\alpha}}+N.
\end{align*}
Putting $\{t\}$ for the fractional part of $t$ (that is $\{t\}=t-\lfloor t\rfloor$), we get
\begin{align*}
\sum_{n=1}^N \frac{1}{n^{2\alpha-1}}&=N\bigg(\frac{1}{N^{2\alpha-1}}-1\bigg)-\frac{1-2\alpha}{2-2\alpha}\bigg(\frac{1}{N^{2\alpha-2}}-1\bigg)+(1-2\alpha)\int_1^N\frac{\{t\}}{t^{2\alpha-1}}\,dt+N\\
&\le\frac{1}{(2-2\alpha)N^{2\alpha-2}}+\bigg\vert \frac{1-2\alpha}{2-2\alpha}\bigg\vert+\bigg\vert \frac{1-2\alpha}{2-2\alpha}\bigg\vert\bigg(\frac{1}{N^{2\alpha-2}}+1\bigg)
\end{align*}
It easily follows that there exists a positive constant $D_\alpha$ such that
\begin{align*}
\mathcal{S}_1(\lambda)\le D_\alpha \vert\lambda-1\vert^{2\alpha}.
\end{align*}
We then estimate $\mathcal{S}_2(\lambda)$ by using the inequality $\vert \lambda-1\vert\le 2$ and we get
\begin{align*}
\mathcal{S}_2(\lambda)&\le 4\sum_{n=N+1}^{+\infty}\frac{1}{n^{2\alpha+1}}\le 4\int_N^{+\infty}\frac{dt}{t^{2\alpha+1}}=\frac{2}{\alpha N^{2\alpha}}
\end{align*}
and there exists a positive constant $E_\alpha$ such that $\mathcal{S}_2(\lambda)\le E_\alpha \vert\lambda-1\vert^{2\alpha}$. Finally, we conclude that the $\mathbb{T}$-eigenvector field $E$ is $\alpha$-H\"olderian, which concludes the proof of Theorem \ref{reponse}.
\end{proof}

\section{A negative result}
The aim of this section is to show that, given a bounded linear operator $T$ on $\mathcal{H}$ whose eigenvectors associated to unimodular eigenvalues are parametrized by a $\mu$-spanning $\mathbb{T}$-eigenvector field, there is no uniform rate of decrease of the sequence of correlations for every functions in $L^{2}(\mathcal{H},\mathcal{B},m)$.\\
\noindent Since the covariance operator $R$ is a positive trace class operator, the Hilbert space $\mathcal{H}$ has an orthonormal basis $(e_n)_{n\in \mathbb{N}}$ consisting of eigenvectors of $R$. Thus $Re_n=\lambda_n e_n$ where $\lambda_n \ge 0$, $\text{tr} R=\sum_{n\ge 1}\lambda_n<+\infty$ and one can easily shows that $\lambda_n=2\,\sigma_n^2$, where $\sigma_n^2$ is the variance of the Gaussian random variable $\mathfrak{Re}\langle e_n,\cdot\rangle$ with respect to the measure $m$. The following property of this basis is fundamental for the sequel.
\begin{Prop}\label{basis}
The sequence of random variables $(\langle e_k,\cdot\rangle)_{k\in \mathbb{N}}$ is orthogonal in the space $L^2(\mathcal{H},\mathcal{B},m)$. In fact, the complex Gaussian variables $\langle e_k,\cdot \rangle$ are independent.
\end{Prop}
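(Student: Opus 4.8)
The plan is to combine two ingredients: that the vectors $(e_k)$ are eigenvectors of the covariance operator $R=R_m$, which yields the $L^2$-orthogonality almost immediately, and the joint Gaussianity of the functionals $\langle e_k,\cdot\rangle$, which is what promotes orthogonality to independence.

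For the orthogonality I would first observe that each $\langle e_k,\cdot\rangle$ is centred (being complex symmetric Gaussian), so the $L^2(\mathcal{H},\mathcal{B},m)$-inner product of $\langle e_j,\cdot\rangle$ and $\langle e_k,\cdot\rangle$ is exactly $\int_{\mathcal{H}}\langle e_j,z\rangle\overline{\langle e_k,z\rangle}\,dm(z)$, which by the definition of the covariance operator equals $\langle Re_j,e_k\rangle=\lambda_j\langle e_j,e_k\rangle$; this vanishes for $j\neq k$. To reach independence I also need the vanishing of the "pseudo-covariances" $\int_{\mathcal{H}}\langle x,z\rangle\langle y,z\rangle\,dm(z)$ for all $x,y\in\mathcal{H}$. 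This follows by applying to $\langle x+y,\cdot\rangle=\langle x,\cdot\rangle+\langle y,\cdot\rangle$ the elementary fact that a complex symmetric Gaussian variable $f$ satisfies $\int f^{2}\,dm=\int\big((\mathfrak{Re}f)^{2}-(\mathfrak{Im}f)^{2}\big)\,dm+2i\int\mathfrak{Re}f\,\mathfrak{Im}f\,dm=0$ (its real and imaginary parts being independent with equal variance), and then expanding the square and cancelling the two diagonal terms.

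Next I would check that the real family $\{\mathfrak{Re}\langle e_k,\cdot\rangle,\ \mathfrak{Im}\langle e_k,\cdot\rangle\}_{k}$ is jointly Gaussian: any finite real-linear combination of these equals $\mathfrak{Re}\langle w,\cdot\rangle$ for a suitable $w\in\mathcal{H}$ (using $\mathfrak{Im}\langle e_k,z\rangle=\mathfrak{Re}\langle ie_k,z\rangle$ and the $\mathbb{R}$-linearity of $x\mapsto\mathfrak{Re}\langle x,z\rangle$), hence is a real Gaussian variable by Definition \ref{gaussianmeasure}. Fixing $j\neq k$ and writing $a,b$ (resp. $c,d$) for the real and imaginary parts of $\langle e_j,\cdot\rangle$ (resp. $\langle e_k,\cdot\rangle$), the two complex identities $\int_{\mathcal{H}}\langle e_j,z\rangle\overline{\langle e_k,z\rangle}\,dm(z)=0$ and $\int_{\mathcal{H}}\langle e_j,z\rangle\langle e_k,z\rangle\,dm(z)=0$ expand, after taking real and imaginary parts and forming sums and differences, into the four real equalities $\int_{\mathcal{H}} ac\,dm=\int_{\mathcal{H}} ad\,dm=\int_{\mathcal{H}} bc\,dm=\int_{\mathcal{H}} bd\,dm=0$. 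Thus the centred, jointly Gaussian vectors $(a,b)$ and $(c,d)$ are uncorrelated, hence independent; since the whole family is jointly Gaussian, pairwise independence (block-diagonal covariance matrix) forces mutual independence, which in particular re-establishes the orthogonality.

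The computations are all routine; the only points deserving care are the conjugation bookkeeping when isolating the four real cross-covariances in the last step, and the recognition that the genuinely "Gaussian" input — uncorrelated jointly Gaussian variables are independent — is what does the work, orthogonality alone being strictly weaker. If some $\lambda_k=0$ the corresponding variable is $m$-a.s. zero and the statement is trivial for that index, so one may freely assume all $\lambda_k>0$.
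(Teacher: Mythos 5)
Your proof is correct and follows essentially the same route as the paper: orthogonality comes from the eigenvector relation $\langle Re_j,e_k\rangle=\lambda_j\langle e_j,e_k\rangle=0$ together with the definition of $R$, and independence comes from showing that all four real cross-covariances between $\{\mathfrak{Re}\langle e_j,\cdot\rangle,\mathfrak{Im}\langle e_j,\cdot\rangle\}$ and $\{\mathfrak{Re}\langle e_k,\cdot\rangle,\mathfrak{Im}\langle e_k,\cdot\rangle\}$ vanish and invoking the fact that uncorrelated jointly Gaussian variables are independent. The only difference is one of sourcing: where the paper obtains these covariances from Lemma \ref{rotations} (rotation invariance of the Gaussian measure), you derive them directly by combining the Hermitian covariance $\langle Re_j,e_k\rangle=0$ with the vanishing of the pseudo-covariance $\int_{\mathcal{H}}\langle x,z\rangle\langle y,z\rangle\,dm(z)$, the latter obtained by polarizing $\int f^{2}\,dm=0$ for complex symmetric Gaussian $f$ --- a correct and self-contained substitute for the cited lemma.
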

\begin{proof}
Since $(e_k)_{k\in \mathbb{N}}$ is an orthogonal sequence of eigenvectors of $R$, $\langle Re_k,e_\ell\rangle=0$ for every distinct positive integers $k$ and $\ell$. Hence, the first statement holds by definition of $R$. Furthermore, the independence comes from the fact that for any distinct positive integers $k$ and $\ell$, the real Gaussian variables $\mathfrak{Re}\langle e_k,\cdot\rangle$ and $\mathfrak{Im}\langle e_k,\cdot\rangle$ are independent from $\mathfrak{Re}\langle e_\ell,\cdot\rangle$ and $\mathfrak{Im}\langle e_\ell,\cdot\rangle$ since they are orthogonal real Gaussian variables. Indeed, the covariances of these real Gaussian variables are computed in the following lemma which is a consequence of the rotation invariance of a Gaussian measure (see \cite{BG2}, Section $3$).
\begin{Lem}\label{rotations}
For every vectors $x,y$ of $\mathcal{H}$, we have
\begin{equation}\label{rotation1}
 \big\langle \mathfrak{Re}\langle x,\cdot\rangle,\mathfrak{Re}\langle y,\cdot \rangle\big\rangle_{L^2(m)}=\big\langle \mathfrak{Im}\langle x,\cdot\rangle,\mathfrak{Im}\langle y,\cdot \rangle\big\rangle_{L^2(m)}=\frac{1}{2}\mathfrak{Re}\langle Rx,y\rangle
\end{equation}
and
\begin{equation}\label{rotation2}
\big\langle \mathfrak{Im}\langle x,\cdot\rangle,\mathfrak{Re}\langle y,\cdot \rangle\big\rangle_{L^2(m)}=-\big\langle \mathfrak{Re}\langle x,\cdot\rangle,\mathfrak{Im}\langle y,\cdot \rangle\big\rangle_{L^2(m)}=\frac{1}{2}\mathfrak{Im}\langle Rx,y\rangle.
\end{equation}
\end{Lem}
\end{proof}
\noindent The integral representation of the correlations \eqref{representation} gives us the corresponding result in $L^2(\mathbb{T},\mu)$.
\begin{Cor}\label{orthogonality}
The sequence of functions $\big(\langle e_k,E(\cdot)\rangle\big)_{k\in \mathbb{N}}$ is orthogonal in $L^2(\mathbb{T},\mu)$
 and 
\begin{equation*}
\int_{\mathbb{T}}\big\vert \langle e_k,E(\lambda)\rangle\big\vert^2\,d\mu(\lambda)=\int_{\mathcal{H}}\big\vert \langle e_k,x\rangle \big\vert^2\,dm(x)=2\sigma_k^2
\end{equation*}
for any positive integer $k$.
\end{Cor}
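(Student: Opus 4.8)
The plan is to read off the whole statement from the integral representation \eqref{representation} of the correlations, specialized to $n=0$. First I would apply \eqref{representation} with $n=0$ and with $x=e_k$, $y=e_\ell$ for arbitrary positive integers $k,\ell$; since $T^{*0}$ is the identity, this gives
\[
\langle Re_k,e_\ell\rangle=\int_{\mathbb{T}}\langle e_k,E(\lambda)\rangle\overline{\langle e_\ell,E(\lambda)\rangle}\,d\mu(\lambda),
\]
and the right-hand side is exactly the inner product in $L^2(\mathbb{T},\mu)$ of the functions $\langle e_k,E(\cdot)\rangle$ and $\langle e_\ell,E(\cdot)\rangle$.

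Next I would use that $(e_k)_{k\in\mathbb{N}}$ is an orthonormal basis of $\mathcal{H}$ consisting of eigenvectors of $R$, so that $Re_k=\lambda_k e_k$ and hence $\langle Re_k,e_\ell\rangle=\lambda_k\,\delta_{k,\ell}$. For $k\neq\ell$ this yields $\int_{\mathbb{T}}\langle e_k,E(\lambda)\rangle\overline{\langle e_\ell,E(\lambda)\rangle}\,d\mu(\lambda)=0$, which is precisely the claimed orthogonality of the family $\big(\langle e_k,E(\cdot)\rangle\big)_{k\in\mathbb{N}}$ in $L^2(\mathbb{T},\mu)$. For $k=\ell$ it gives $\int_{\mathbb{T}}\vert\langle e_k,E(\lambda)\rangle\vert^2\,d\mu(\lambda)=\lambda_k=2\sigma_k^2$, invoking the identity $\lambda_k=2\sigma_k^2$ recalled just before Proposition \ref{basis}. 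Finally, the middle equality $\int_{\mathcal{H}}\vert\langle e_k,x\rangle\vert^2\,dm(x)=2\sigma_k^2$ is immediate from the very definition of the covariance operator $R=R_m$, namely $\langle Re_k,e_k\rangle=\int_{\mathcal{H}}\vert\langle e_k,z\rangle\vert^2\,dm(z)$, combined once more with $\langle Re_k,e_k\rangle=\lambda_k=2\sigma_k^2$; alternatively one can split $\vert\langle e_k,x\rangle\vert^2$ into the squares of its real and imaginary parts and apply Lemma \ref{rotations} directly.

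There is essentially no obstacle: the corollary is a direct transcription of \eqref{representation} at $n=0$ through the spectral decomposition of $R$. The only points deserving a word of care are to use that the $e_k$'s are genuinely orthonormal (so that $\langle Re_k,e_\ell\rangle=\lambda_k\delta_{k,\ell}$, not merely a multiple of $\langle e_k,e_\ell\rangle$) and to quote the normalization $\lambda_k=2\sigma_k^2$, which is what makes the three displayed quantities literally equal rather than equal up to a constant.
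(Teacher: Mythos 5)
Your proposal is correct and is exactly the argument the paper intends: the corollary is stated as an immediate consequence of the integral representation \eqref{representation}, which at $n=0$ gives $\langle Re_k,e_\ell\rangle=\int_{\mathbb{T}}\langle e_k,E(\lambda)\rangle\overline{\langle e_\ell,E(\lambda)\rangle}\,d\mu(\lambda)$, and the spectral relation $Re_k=2\sigma_k^2 e_k$ together with the definition of the covariance operator yields both the orthogonality and the displayed equalities. Nothing is missing.
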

\noindent We now introduce the \textit{complex Gaussian space} 
\begin{equation*}
\mathcal{G}_{\mathbb{C}}:=\overline{\text{span}}^{L^2(\mathcal{H},\mathcal{B},m)}\big[\langle e_k,\cdot \rangle\, ;\, k\in \mathbb{N}\big].
\end{equation*}
This subspace is called \textit{Gaussian} in the sense that any function in $\mathcal{G}_{\mathbb{C}}$ has complex symmetric Gaussian distribution. We can now prove the main result of this section.
\begin{Theo}\label{non}
Let $T\in \mathcal{B}(\mathcal{H})$ be a bounded linear operator on $\mathcal{H}$ whose eigenvectors associated to unimodular eigenvalues are parametrized by a $\mu$-spanning $\mathbb{T}$-eigenvector field $E$. Then, for every null sequence $(s_n)_{n\in \mathbb{N}}$ of positive real numbers, there exists a function $f$ in $\mathcal{G}_{\mathbb{C}}$ such that
\begin{equation*}
\big\vert \mathcal{I}_n(\overline{f},f) \big\vert \ge s_n
\end{equation*}
for any positive integer $n$, where $\overline{f}$ denotes the function $x\longmapsto \overline{f(x)}$.
\end{Theo}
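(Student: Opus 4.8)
\textit{Reduction to a Fourier problem on $\mathbb{T}$.} The plan is to compute $\mathcal{I}_n(\overline f,f)$ explicitly for a general $f=\sum_k a_k\langle e_k,\cdot\rangle$ in $\mathcal{G}_{\mathbb{C}}$ (so $\sum_k|a_k|^2\sigma_k^2<\infty$), and reduce the statement to a construction in $L^2(\mathbb{T},\mu)$. Since every element of $\mathcal{G}_{\mathbb{C}}$ is a centred complex Gaussian variable, $\int_\mathcal{H}f\,dm=\int_\mathcal{H}\overline f\,dm=0$, so $\mathcal{I}_n(\overline f,f)=\int_\mathcal{H}\overline{f(T^nx)}f(x)\,dm(x)$. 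Expanding $f(T^nx)=\sum_k a_k\langle T^{*n}e_k,x\rangle$, using the definition of $R$ to get $\int_\mathcal{H}\overline{\langle T^{*n}e_k,x\rangle}\langle e_\ell,x\rangle\,dm(x)=\langle Re_\ell,T^{*n}e_k\rangle=\overline{\langle RT^{*n}e_k,e_\ell\rangle}$, and then inserting the integral representation \eqref{representation}, one obtains
$$
\mathcal{I}_n(\overline f,f)=\int_{\mathbb{T}}\overline{\lambda}^{\,n}\,|\phi(\lambda)|^2\,d\mu(\lambda),\qquad \phi:=\sum_k a_k\,\langle e_k,E(\cdot)\rangle .
$$
By Corollary~\ref{orthogonality} the functions $\langle e_k,E(\cdot)\rangle$ are orthogonal in $L^2(\mathbb{T},\mu)$ with $\|\langle e_k,E(\cdot)\rangle\|^2=2\sigma_k^2$, so as $(a_k)$ ranges over all admissible coefficients, $\phi$ ranges over the whole closed subspace $\mathcal{E}:=\overline{\operatorname{span}}^{\,L^2(\mu)}[\langle e_k,E(\cdot)\rangle:k\ge0]$, with $\|\phi\|_{L^2(\mu)}=\|f\|_{L^2(m)}$. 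Hence it is enough to show: for every nonincreasing null sequence $(s_n)$ (we may pass to $s_n'=\sup_{k\ge n}s_k$) there is $\phi\in\mathcal{E}$ with $\bigl|\int_{\mathbb{T}}\overline\lambda^{\,n}|\phi|^2\,d\mu\bigr|\ge s_n$ for all $n\ge1$.

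\textit{The combinatorial core.} The second step is an elementary lemma: for every nonincreasing null sequence $(t_n)$ there is a nonnegative $(b_k)_{k\ge0}\in\ell^2$ with $\sum_{k\ge0}b_kb_{k+n}\ge t_n$ for all $n\ge1$. One takes $b$ equal to a constant $\beta_j$ on consecutive blocks $[M_{j-1},M_j)$ of length $L_j:=M_j-M_{j-1}$; the self-overlap of the $j$-th block with its shift by $n$ already gives $\sum_k b_kb_{k+n}\ge(L_j-n)\beta_j^2$ whenever $L_j>n$. Fixing $L_j\beta_j^2=2^{-j}$ and choosing the $L_j$ increasing fast enough, recursively, using $t_n\to0$, forces $\sum_k b_kb_{k+n}\ge t_n$, while $\sum_j\beta_j^2L_j=\sum_j2^{-j}<\infty$ (a harmless rescaling handles the smallest values of $n$).

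\textit{Construction of $\phi$.} The intertwining $TK=KV$ gives $K^{*}T^{*}=V^{*}K^{*}$, i.e.\ $K^{*}$ conjugates $T^{*}$ into multiplication by $\overline\lambda$; taking complex conjugates shows $\mathcal{E}$ is invariant under multiplication by $\lambda$. Also $\mathcal{E}$ contains the bounded nonzero function $\phi_0:=\langle e_0,E(\cdot)\rangle$ (bounded since $E$ is bounded, nonzero since $\int_\mathbb{T}|\phi_0|^2\,d\mu=2\sigma_0^2>0$). Therefore $Q\phi_0\in\mathcal{E}$ for every $Q$ in the Hardy space $H^2$ of $L^2(\mathbb{T},\mu)$, and with $Q=\sum_k b_k\lambda^k$ one has $|\phi|^2=w|Q|^2$ with $w:=|\phi_0|^2$; writing $c_\ell:=\int_\mathbb{T}w\,\lambda^\ell\,d\mu$ and $r_m:=\sum_{a\ge0}b_ab_{a+|m|}\ge0$ a short computation gives $\mathcal{I}_n(\overline f,f)=\sum_{\ell\in\mathbb{Z}}c_\ell\,r_{\ell+n}$. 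The decisive point is to kill all cancellation in this sum: this happens whenever $w$ is positive definite, i.e.\ $c_\ell\ge0$ for all $\ell$, for then $\mathcal{I}_n\ge c_0\,r_n=\bigl(\int_\mathbb{T}w\,d\mu\bigr)r_n$, and applying the lemma with $t_n=s_n/c_0$ finishes the proof. Positive definiteness of $w$ is automatic as soon as $\mathcal{E}$ contains an $H^2$-function with nonnegative Taylor coefficients — in particular when $H^2\subseteq\mathcal{E}$, where one simply takes $\phi_0=1$. For a general $\mathcal{E}$ one argues inside the shift-invariant subspace $\mathcal{E}$: by the classification of shift-invariant subspaces of $L^2(\mathbb{T})$, either $\mathcal{E}=\psi_*H^2$ for a unimodular $\psi_*$, and $\phi:=\psi_*Q$ gives $|\phi|^2=|Q|^2$, $\mathcal{I}_n=r_n\ge s_n$ at once; or $\mathcal{E}=\mathbf{1}_E L^2(\mathbb{T})$ for a set $E$ with $\mu(E)>0$, and one builds $|\phi|^2$ as a superposition of bumps supported near a Lebesgue density point $\lambda_0$ of $E$, at geometrically decreasing scales, chosen so that their Fourier transforms all carry the same phase $\overline{\lambda_0}^{\,n}$ (hence add up) and the weights follow $(b_k)$; a constant rescaling of $\phi$ concludes.

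\textit{Where the difficulty is.} The obstacle is precisely the uniformity over all $n\ge1$: a fixed $\phi\in\mathcal{E}$ satisfies $\mathcal{I}_n\to0$ by strong mixing, so one must make infinitely many Fourier modes reinforce one another rather than cancel. The role of
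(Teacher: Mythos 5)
Your reduction of $\mathcal{I}_n(\overline f,f)$ to the Fourier coefficients $\int_{\mathbb{T}}\lambda^{-n}\vert f\circ E\vert^2\,d\mu$ and the observation that $f\mapsto f\circ E$ is an isometry of $\mathcal{G}_{\mathbb{C}}$ onto the $V$-invariant subspace $\mathcal{E}$ are exactly the paper's first step. Where the paper then simply invokes the Badea--M\"uller theorem on weak orbits (for an operator of spectral radius $1$ whose powers tend to $0$ in the weak operator topology), you try to reprove that input by hand via the Beurling--Wiener classification of the shift-invariant subspace $\mathcal{E}$. In the simply invariant case $\mathcal{E}=\psi_*H^2$ your argument is complete and correct: the block-constant sequence $(b_k)$ gives $\widehat{\vert Q\vert^2}(n)=\sum_k b_kb_{k+n}\ge t_n$, and $\phi=\psi_*Q$ finishes it. (The intermediate paragraph about forcing $w=\vert\phi_0\vert^2$ to be positive definite is a dead end for a general $\phi_0$ and can be deleted; note also that $Q\phi_0$ need not lie in $L^2$ unless $\phi_0$ is bounded, which is why the dichotomy is the right move.)

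The genuine gap is the doubly invariant case $\mathcal{E}=\mathbf{1}_A L^2(\mathbb{T})$: there you must produce a nonnegative $h\in L^1$ \emph{supported on $A$} with $\vert\hat h(n)\vert\ge s_n$ for \emph{every} $n\ge 1$, and the one-sentence sketch does not confront the actual obstruction. A bump at scale $\delta_j$ restricted to $A$ has the form (good kernel)$\,-\,$(kernel restricted to $A^c$); the good parts add coherently only for $\vert n\vert\lesssim 1/\delta_j$, while the error term contributed by the $j$-th bump admits no $n$-decay beyond the uniform bound $\epsilon_j$ coming from the density point. Summed over $j$, these errors accumulate to a fixed positive constant, whereas the coherent signal at frequency $n$ is only $\sum_{\delta_j\le c/n}w_j$, which tends to $0$ as $n\to\infty$; so the naive superposition fails for large $n$. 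Repairing this requires choosing the scales $\delta_j$ recursively, using Riemann--Lebesgue on the finitely many coarser bumps to push their error below the tail weight $\sum_{i\ge j}w_i$ on the frequency range assigned to bump $j$ (and using Fej\'er-type bumps so that the coarse bumps never contribute a large negative real part). That interleaved construction is precisely the content of the special case of Badea--M\"uller you are trying to avoid, and it is missing; the proposal is also cut off mid-sentence. As written, the proof is complete only when $\mathcal{E}$ happens to be simply invariant.
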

\begin{proof}
Since the random variables $\langle e_k,\cdot \rangle$ are orthogonal in the space $L^2(\mathcal{H},\mathcal{B},m)$ by Proposition \ref{basis}, for every function $f$ in  $\mathcal{G}_{\mathbb{C}}$ we can find a sequence of complex numbers $(a_k)_{k\in \mathbb{N}}$ such that
\begin{equation*}
f=\sum_{k\ge 1}a_k\langle e_k,\cdot \rangle\quad \text{with} \quad \sum_{k= 1}^{+\infty}\vert a_k \vert^2\sigma_k^2<+\infty.
\end{equation*}
Now the random variable $f$ is centered and then the integral representation \eqref{representation} gives us
\begin{align*}
\mathcal{I}_n(\overline{f},f)&=\sum_{(k,\ell)\in\mathbb{N}^2}\overline{a_k}a_\ell\int_{\mathcal{H}}\overline{\langle e_k,T^nx\rangle}\langle e_\ell,x\rangle\,dm(x)=\sum_{(k,\ell)\in \mathbb{N}^2}\overline{a_k}a_\ell \overline{\langle RT^{*n}e_k,e_\ell\rangle} \\
&=\sum_{(k,\ell)\in \mathbb{N}^2}\overline{a_k}a_\ell\int_{\mathbb{T}}\lambda^{-n}\overline{\langle e_k,E(\lambda)\rangle}\langle e_\ell,E(\lambda)\rangle\,d\mu(\lambda)=\big\langle V^n(f\circ E),f\circ E\big\rangle_{L^2(\mathbb{T},\mu)},
\end{align*}
that is $\big\{\mathcal{I}_n(\overline{f},f)\,;\, n\in \mathbb{N}\big\}$ is the weak orbit of the vector $f\circ E$ under the action of the operator $V$ of multiplication by the variable $\lambda$ on $L^2(\mathbb{T},\mu)$. Then we consider the closed subspace of $L^2(\mathbb{T},\mu)$: 
\begin{equation*}
\mathcal{E}=\overline{\text{span}}^{L^2(\mathbb{T},\mu)}\big[\langle e_k, E(\cdot)\rangle \, ; \, k\in \mathbb{N}\big].
\end{equation*}
Since $E(\lambda)$ is an eigenvector of $T$ associated to the eigenvalue $\lambda$, the subspace $\mathcal{E}$ of $L^2(\mathbb{T},\mu)$ is $V$-invariant. At this stage, we apply a result of \cite{BMu} due to Badea and M\"uller which deals with the speed of convergence to zero of the weak orbits $(\langle S^nx,y\rangle)_{n\in \mathbb{N}}$ of an operator $S$ such that $S^n\longrightarrow 0$ in the weak operator topology. In particular, it is proved in \cite{BMu} that if $S$ is a bounded linear operator on a complex Hilbert space $H$ with spectral radius equal to $1$ such that $S^n\longrightarrow 0$ in the weak operator topology, then for any sequence $(s_n)_{n\ge 1}$ of positive numbers which decreases to zero, we can find a vector $x$ in $H$ such that $\vert\langle S^nx,x\rangle\vert \ge s_n$ for any positive integer $n$. Applying this result here, we get that for any sequence $(s_n)_{n\ge 1}$ of positive numbers which decreases to zero there exists a function $f_E$ in $\mathcal{E}$ such that $\vert\langle V^nf_E,f_E\rangle_{L^2(\mathbb{T},\mu)}\vert\ge s_n$ for any positive integer $n$. We expand $f_E$ as
\begin{equation*}
f_E=\sum_{k\ge 1}a_k\langle e_k,E(\cdot)\rangle\ \ \ \mathrm{with}\ \ \ \sum_{k=1}^{+\infty}\vert a_k\vert^2\, \sigma_k^2<+\infty,
\end{equation*}
and we conclude that $f=\sum_{k\ge 1}a_k\langle e_k,\cdot\rangle$ is a function in $\mathcal{G}_{\mathbb{C}}$ which satisfied the conclusion of Theorem \ref{non}.
\end{proof}

\begin{Rem}\label{plusieurs}
In our work, we consider bounded linear operators which admit only one $\mathbb{T}$-eigenvector field $E$ which is $\mu$-spanning. In a more general situation, the $\mathbb{T}$-eigenvectors of the bounded linear operator $T\in \mathcal{B}(\mathcal{H})$ are parametrized by a countable family of $\mathbb{T}$-eigenvector fields $(E_i)_{i\in I}$ (see Fact \ref{parametrisation}). Then the operator $K$ is defined on the Hilbert space $\bigoplus_{i\in I}L^2(\mathbb{T},\mu)$ by 
\begin{equation*}
K(\oplus_{i\in I} f_i)=\sum_{i\in I} \alpha_i K_{E_i}(f_i)\ \ \ \mathrm{where}\ \ \ K_{E_i}(f_i)=\int_{\mathbb{T}}f_i(\lambda)E_i(\lambda)\,d\mu(\lambda)
\end{equation*}
and where $(\alpha_i)_{i\in I}$ is a sequence of positive numbers such that $\sum_{i\in I}\alpha_i^2\vert\vert E_i \vert\vert_2^2<\infty$ where $\vert\vert E_i \vert\vert_2^2=\int_\mathbb{T}\vert\vert E_i(\lambda)\vert\vert^2\,d\mu(\lambda)$ (and we put $R:=KK^{*}=\sum_{i\in I}^{} \alpha_i^2 K_{E_i}K_{E_i}^*$). In this case, it readily follows from the proof of Theorem \ref{non} that there is no uniform rate of decrease in this situation too.
\end{Rem}

This result shows that there is no uniform rate of decrease of the correlations in the whole space $L^2(\mathcal{H},\mathcal{B},m)$. The rest of the paper is devoted to find a speed of mixing for classes of regular functions of $L^2_{\mathbb{R}}(\mathcal{H},\mathcal{B},m):=\{f:\mathcal{H}\longrightarrow \mathbb{R}\ ;\ f\in L^2(\mathcal{H},\mathcal{B},m)\}$. In a first step, we will need to compute the correlations $\mathcal{I}_n(P,Q)$ where $P$ and $Q$ are real polynomials in several variables. We can do this by using the Fock space associated to $L^2_{\mathbb{R}}(\mathcal{H},\mathcal{B},m)$.

\section{Orthogonal decomposition of $L^2_{\mathbb{R}}(\mathcal{H},\mathcal{B},m)$}
In this section, we aim to present a way to compute the correlations $\mathcal{I}_n(f,g)$ for arbitrary functions $f, g$ in $L^2_{\mathbb{R}}(\mathcal{H},\mathcal{B},m)$. In order to do this, we will first explain the construction of the Fock space over a Gaussian subspace of $L^2_{\mathbb{R}}(\mathcal{H},\mathcal{B},m)$ and we will establish some helpful properties of the orthogonal components of the Fock space before giving the general formula for the correlations.

\subsection{Fock space over a Gaussian space}
The theory of Fock spaces will allow us to compute the correlations for arbitrary real functions in $L^2(\mathcal{H},\mathcal{B},m)$ by using an orthogonal decomposition of this space. We begin by recalling some definitions and facts on Fock spaces that will be useful in the sequel; for a thorough account see \cite{J} or \cite{P}.\\
We denote by $\mathbb{Z}^*$ the set of integers different from zero. In the sequel, we denote by $(\mathfrak{e}_\ell)_{\ell\in \mathbb{Z}^*}$ the sequence of vectors of $\mathcal{H}$ defined by 
$$
\mathfrak{e}_\ell=e_\ell\qquad \mathrm{and}\qquad \mathfrak{e}_{-\ell}=ie_\ell
$$
for any positive integer $\ell$. Recall that for any positive integer $\ell$, $\sigma_\ell^2$ denotes the variance of the Gaussian random variables $\mathfrak{Re}\langle \mathfrak{e}_\ell,\cdot\rangle=\mathfrak{Re}\langle e_\ell,\cdot\rangle$ and $\mathfrak{Re}\langle \mathfrak{e}_{-\ell},\cdot\rangle=\mathfrak{Im}\langle e_\ell,\cdot\rangle$:
$$
\sigma_\ell^2=\int_\mathcal{H}(\mathfrak{Re}\langle \mathfrak{e}_\ell,x\rangle)^2\,dm(x)=\int_\mathcal{H}(\mathfrak{Re}\langle \mathfrak{e}_{-\ell},x\rangle)^2\,dm(x)
$$
and we put $\sigma_{-\ell}^2:=\sigma_\ell^2$. We also denote by $\mathcal{G}$ the real Gaussian space 
\begin{equation*}
\mathcal{G}=\overline{\text{span}}^{L^2_{\mathbb{R}}(\mathcal{H},\mathcal{B},m)}[\mathfrak{Re}\langle \mathfrak{e}_k, \cdot \rangle ; k\in \mathbb{Z}^*].
\end{equation*}
Since $\mathcal{B}$ is the $\sigma$-algebra generated by the functions in $\mathcal{G}$, we get by applying the Weierstrass Theorem:
\begin{equation*}
L^2_{\mathbb{R}}(\mathcal{H},\mathcal{B},m)=\overline{\text{span}}^{L^2_{\mathbb{R}}(\mathcal{H},\mathcal{B},m)}[g^k ; g\in \mathcal{G}, k\in \mathbb{Z}_+].
\end{equation*}
Let $\mathcal{G}^k$ denote the space of homogeneous polynomials of degree $k$ of elements of $\mathcal{G}$, with $\mathcal{G}^0=\mathbb{R}$. Then the spaces $\mathcal{G}^k$ are linearly independent (see \cite{P}, Chapter $8$, Lemma $2.3$) and we can orthonormalize them by the so-called \textit{Wick transform}.

\begin{Def}
The Wick tranform $:f:$ of a function $f$ belonging to one of the spaces $\mathcal{G}^k$ is defined in the following way:\\
$(i)$ if $f$ is constant, $:f:=f$;\\
$(ii)$ if $f\in \mathcal{G}^k$, $k\ge 1$, then $:f:=f-\mathcal{P}_k(f)$,
where $\mathcal{P}_k$ denotes the orthogonal projection onto the closure in $L^2_{\mathbb{R}}(\mathcal{H},\mathcal{B},m)$ of $\text{span} [\mathcal{G}^j\, ;\, 0\le j \le k-1]$.\\
We also define $:\mathcal{G}^{k}:$ to be the space $\{:f: \ ;\ f\in \mathcal{G}^k\}$.
\end{Def}
By definition of the Wick transform, we have an orthogonal decomposition of $L^2_{\mathbb{R}}(\mathcal{H},\mathcal{B},m)$ as
\begin{equation*}
L^2_{\mathbb{R}}(\mathcal{H},\mathcal{B},m)=\bigoplus_{k\ge 0}:\mathcal{G}^{k}:
\end{equation*}
and a function $f$ in $L^2_{\mathbb{R}}(\mathcal{H},\mathcal{B},m)$ can be decomposed into its so-called \textit{Wiener chaos decomposition}
\begin{equation}\label{expansion}
f=\sum_{k\ge 0}\mathcal{P}_{:\mathcal{G}^k:}f,
\end{equation}
where $\mathcal{P}_{:\mathcal{G}^k:}$ denotes the orthogonal projection onto the space $:\mathcal{G}^k:$.\\
Our aim is to identify the space $L^2_{\mathbb{R}}(\mathcal{H},\mathcal{B},m)$ with the Fock space over $\mathcal{G}$ by using this decomposition. We define the scalar product $\langle \cdot , \cdot \rangle_{\otimes}$ on the Hilbert tensor product $\bigotimes_k\mathcal{G}$ by setting, for every $g_1,\dots,g_k, h_1,\dots,h_k$ in $\mathcal{G}$,

\begin{equation*}
\langle g_1\otimes\dots\otimes g_k, h_1\otimes\dots\otimes h_k \rangle_{\otimes}=\langle g_1, h_1\rangle_{L^2(m)} \dots \langle g_k, h_k \rangle_{L^2(m)}.
\end{equation*}
We then introduce the space $\mathcal{G}_{\odot}^k$ which is the range of the projection
\begin{equation*}
 \textrm{Sym} : \bigotimes_k{\mathcal{G}}\longrightarrow \mathcal{G}_{\odot}^k
\end{equation*} 
 defined by, for every $f_1,\dots,f_k$ in $\mathcal{G}$,
\begin{equation}\label{sym}
\text{Sym}(f_1\otimes\dots\otimes f_k)=\frac{1}{k!}\sum_{\tau \in \mathfrak{S}_k}f_{\tau(1)}\otimes\dots\otimes f_{\tau(k)},
\end{equation}
where $\mathfrak{S}_k$ denotes the group of permutations of the set $\{1,\dots,k\}$.
For convenience, we endow $\mathcal{G}_{\odot}^k$ with a new scalar product $\langle \cdot,\cdot \rangle_{\odot}$ by setting
\begin{equation}\label{scalarproduct}
\langle f,g \rangle_{\odot}=k!\,\langle f,g \rangle_{\otimes}
\end{equation}
for every $f,g$ in $\mathcal{G}_{\odot}^k$.
\begin{Def}
The Fock space $\mathcal{F}(\mathcal{G})$ over $\mathcal{G}$ is defined by
\begin{equation*}
\mathcal{F}(\mathcal{G})=\bigoplus_{k\ge 0}\mathcal{G}_{\odot}^k
\end{equation*}
where the sum is an orthogonal direct sum and each $\mathcal{G}_{\odot}^k$ is endowed with the scalar product $\langle \cdot,\cdot \rangle_{\odot}$.
\end{Def}
\noindent The main interest of this is that the map
\begin{eqnarray}\label{isometrie}
\notag :\mathcal{G}^{k}:&\longrightarrow & \mathcal{G}_{\odot}^k\\
 :f_1\dots f_k:&\longmapsto & \text{Sym}(f_1\otimes\dots\otimes f_k)
\end{eqnarray}
extends uniquely to an isometry from $\big(:\mathcal{G}^{k}:,\langle \cdot,\cdot\rangle_{L^2(m)}\big)$ onto $\big(\mathcal{G}_{\odot}^k,\langle \cdot,\cdot\rangle_{\odot}\big)$. Hence, the orthogonal decomposition
\begin{equation*}
L_{\mathbb{R}}^2(\mathcal{H},\mathcal{B},m)=\bigoplus_{k\ge 0}:\mathcal{G}^{k}:
\end{equation*}
allows us to make the identification $L_{\mathbb{R}}^2(\mathcal{H},\mathcal{B},m)=\mathcal{F}(\mathcal{G})$.\\
\noindent In order to compute the correlations between two functions in $L^2_{\mathbb{R}}(\mathcal{H},\mathcal{B},m)$, we are now going to find  a helpful decomposition of the functions $\mathcal{P}_{:\mathcal{G}^k:}f$ which appear in \eqref{expansion}.

\subsection{Canonical decomposition in the spaces $:\mathcal{G}^k:$}
It is now time to understand more precisely the spaces $:\mathcal{G}^k:$ and to make some computations in them. We shall begin with the space $:\mathcal{G}^1:=\mathcal{G}$ which is of important interest in the rest of the paper. The following result ensues directly from Proposition \ref{basis} and Lemma \ref{rotations}.
\begin{Cor}\label{dec}
The sequence of random variables $\big(\mathfrak{Re}\langle \mathfrak{e}_k,\cdot\rangle\big)_{k\in \mathbb{Z}^*}$ is orthogonal in the space $L^2_{\mathbb{R}}(\mathcal{H},\mathcal{B},m)$.
\end{Cor}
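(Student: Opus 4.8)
The plan is to reduce everything to Lemma \ref{rotations} applied to the eigenvectors $e_n$ of $R$. First I would unwind the definition of the vectors $\mathfrak{e}_\ell$: since $\langle\cdot,\cdot\rangle$ is conjugate-linear in its first slot, $\langle ie_\ell,x\rangle=-i\langle e_\ell,x\rangle$, whence $\mathfrak{Re}\langle\mathfrak{e}_\ell,\cdot\rangle=\mathfrak{Re}\langle e_\ell,\cdot\rangle$ and $\mathfrak{Re}\langle\mathfrak{e}_{-\ell},\cdot\rangle=\mathfrak{Im}\langle e_\ell,\cdot\rangle$ for every positive integer $\ell$. Thus, up to reindexing, the family $(\mathfrak{Re}\langle\mathfrak{e}_k,\cdot\rangle)_{k\in\mathbb{Z}^*}$ is the family $\big(\mathfrak{Re}\langle e_\ell,\cdot\rangle,\ \mathfrak{Im}\langle e_\ell,\cdot\rangle\big)_{\ell\ge 1}$, and it suffices to check that any two distinct members of this family are orthogonal in $L^2_{\mathbb{R}}(\mathcal{H},\mathcal{B},m)$.

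Next I would record the one algebraic fact about $R$ that we need: because $(e_n)_{n\in\mathbb{N}}$ is an orthonormal basis of eigenvectors of $R$ with $Re_n=\lambda_n e_n$ and $\lambda_n\ge 0$, we have $\langle Re_a,e_b\rangle=\lambda_a\,\delta_{a,b}$, which in every case is a \emph{nonnegative real number}. In particular $\mathfrak{Im}\langle Re_a,e_b\rangle=0$ for all $a,b\ge 1$, and $\mathfrak{Re}\langle Re_a,e_b\rangle=0$ whenever $a\ne b$.

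Then I would run through the handful of cases using Lemma \ref{rotations}. If $a\ne b$, formula \eqref{rotation1} gives $\langle\mathfrak{Re}\langle e_a,\cdot\rangle,\mathfrak{Re}\langle e_b,\cdot\rangle\rangle_{L^2(m)}=\langle\mathfrak{Im}\langle e_a,\cdot\rangle,\mathfrak{Im}\langle e_b,\cdot\rangle\rangle_{L^2(m)}=\frac12\mathfrak{Re}\langle Re_a,e_b\rangle=0$, which handles all distinct pairs whose two indices have the same sign. For two indices of opposite sign, the relevant inner product is $\pm\frac12\mathfrak{Im}\langle Re_a,e_b\rangle$ by \eqref{rotation2}, and this is $0$ for \emph{every} $a,b$ (including $a=b$) by the reality of $\lambda_a$; in particular $\langle\mathfrak{Re}\langle e_a,\cdot\rangle,\mathfrak{Im}\langle e_a,\cdot\rangle\rangle_{L^2(m)}=-\frac12\mathfrak{Im}\,\lambda_a=0$. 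This exhausts all cases and proves the corollary.

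There is essentially no genuine obstacle here; the statement is a bookkeeping consequence of Proposition \ref{basis} and Lemma \ref{rotations}. The only point deserving a moment's attention is the opposite-sign, equal-modulus pair $\mathfrak{e}_a,\mathfrak{e}_{-a}$, for which one cannot appeal to $a\ne b$ and must instead use that $\lambda_a$ is real (and nonnegative) so that $\mathfrak{Im}\langle Re_a,e_a\rangle=0$.
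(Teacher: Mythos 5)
Your proof is correct and follows exactly the route the paper intends: the paper simply asserts that the corollary "ensues directly from Proposition \ref{basis} and Lemma \ref{rotations}", and your case analysis (real part of $\langle Re_a,e_b\rangle$ vanishing for $a\ne b$, imaginary part vanishing for all $a,b$ since the eigenvalues $\lambda_a$ are real) is precisely the bookkeeping that assertion leaves to the reader. The attention you pay to the pair $\mathfrak{e}_a,\mathfrak{e}_{-a}$ is the one case genuinely worth writing down, and you handle it correctly.
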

\begin{Rem}\label{decompositiong}
According to Corollary \ref{dec}, every function $f$ in $\mathcal{G}$ can be written in a unique way as
\begin{equation*}
f=\sum_{k\in \mathbb{Z}^*}a_k\,\mathfrak{Re}\langle \mathfrak{e}_k,\cdot \rangle
\end{equation*}
where $(a_k)_{k\in \mathbb{Z}^*}$ is a sequence of real numbers such that $\sum_{k\in \mathbb{Z}^*}a_k^2\,\sigma_k^2<+\infty$.
\end{Rem}
It is now time to look more closely at the Wick transform of a polynomial of elements of $\mathcal{G}$.
\begin{Prop}\label{hermite}
For any vector $x$ of $\mathcal{H}$ and any positive integer $k$, there exists a $k$-tuple $(\alpha_0,\dots,\alpha_{k-1})$ of real numbers such that
\begin{equation*}
:(\mathfrak{Re}\langle x,\cdot\rangle)^k:=(\mathfrak{Re}\langle x,\cdot\rangle)^k+\alpha_{k-1}\,(\mathfrak{Re}\langle x,\cdot\rangle)^{k-1}+\dots+\alpha_1\, \mathfrak{Re}\langle x,\cdot\rangle+\alpha_0.
\end{equation*}
More precisely, if $H_k$ denotes the $k^{th}$ Hermite polynomial, that is
\begin{equation*}
H_k(t)=(-1)^k e^{t^2/2}\frac{d^k}{dt^k}e^{-t^2/2},
\end{equation*}
then we have 
\begin{equation}\label{hermitevariance}
:(\mathfrak{Re}\langle x,\cdot \rangle)^k:=\sigma_x^k\, H_k\bigg(\frac{\mathfrak{Re}\langle x,\cdot\rangle}{\sigma_x}\bigg).
\end{equation}
\end{Prop}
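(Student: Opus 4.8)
The plan is to set $g:=\mathfrak{Re}\langle x,\cdot\rangle$ and to prove the precise identity \eqref{hermitevariance} directly; the first assertion then follows for free, since $H_k$ is monic of degree $k$ with real coefficients, so that $\sigma_x^k H_k(t/\sigma_x)=t^k+\alpha_{k-1}t^{k-1}+\dots+\alpha_0$, where $\alpha_i$ equals $\sigma_x^{k-i}$ times the coefficient of $t^i$ in $H_k$. We may assume $\sigma_x>0$, the case $\sigma_x=0$ being trivial (then $g=0$ $m$-almost everywhere). Note first that $g$ lies in the Gaussian space $\mathcal{G}$: the map $y\longmapsto\mathfrak{Re}\langle y,\cdot\rangle$ is bounded from $\mathcal{H}$ into $L^2(m)$ because $\int_{\mathcal H}(\mathfrak{Re}\langle y,z\rangle)^2\,dm(z)\le\langle Ry,y\rangle\le\|R\|\,\|y\|^2$, and since $x=\lim_M\sum_{\ell\le M}\langle e_\ell,x\rangle e_\ell$ in $\mathcal{H}$, the function $g$ is an $L^2(m)$-limit of finite real-linear combinations of the $\mathfrak{Re}\langle\mathfrak{e}_k,\cdot\rangle$. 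In particular $g^k\in\mathcal{G}^k$, so $:g^k:$ is well defined, and $N:=g/\sigma_x$ is a standard real Gaussian variable on $(\mathcal H,\mathcal B,m)$.

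By definition $:g^k:\,=g^k-\mathcal{P}_k(g^k)$, where $\mathcal{P}_k$ denotes the orthogonal projection of $L^2_{\mathbb R}(\mathcal H,\mathcal B,m)$ onto the closure $\mathcal{M}$ of $\text{span}[\mathcal{G}^j\,;\,0\le j\le k-1]$. It therefore suffices to establish the two facts: \emph{(a)} $g^k-\sigma_x^k H_k(N)$ belongs to $\text{span}[\mathcal{G}^j\,;\,0\le j\le k-1]$; and \emph{(b)} $\sigma_x^k H_k(N)$ is orthogonal in $L^2(m)$ to $\mathcal{G}^j$ for every $j$ with $0\le j\le k-1$. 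Indeed, granting \emph{(a)} and \emph{(b)}, the decomposition $g^k=\big(g^k-\sigma_x^k H_k(N)\big)+\sigma_x^k H_k(N)$ has its first summand in $\mathcal{M}$ and its second orthogonal to $\mathcal{M}$, whence $\mathcal{P}_k(g^k)=g^k-\sigma_x^k H_k(N)$ and so $:g^k:\,=\sigma_x^k H_k(N)$, which is \eqref{hermitevariance}. Fact \emph{(a)} is just the first paragraph: $g^k-\sigma_x^k H_k(N)=-\sum_{i=0}^{k-1}\alpha_i g^i$ and each $g^i\in\mathcal{G}^i$.

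The substantial point is \emph{(b)}. Fix $j$ with $0\le j\le k-1$ and $g_1,\dots,g_j\in\mathcal{G}$; by linearity it is enough to show that $\int_{\mathcal H}H_k(N)\,g_1\cdots g_j\,dm=0$. Projecting each $g_i$ onto the line $\mathbb{R}N$, write $g_i=c_iN+h_i$ with $c_i\in\mathbb{R}$ and $h_i\in\mathcal{G}$ orthogonal to $N$ in $L^2(m)$. Since $\mathcal{G}$ is a Gaussian subspace, $(N,h_1,\dots,h_j)$ is a Gaussian random vector and $N$ is uncorrelated with each $h_i$, hence $N$ is independent of the vector $(h_1,\dots,h_j)$. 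Expanding, $g_1\cdots g_j$ is a finite $\mathbb{R}$-linear combination of terms of the form $N^aP$, where $0\le a\le j<k$ and $P$ is a product of some of the $h_i$'s (hence a function of $(h_1,\dots,h_j)$, so independent of $N$). For such a term, independence and the fact that $N$ has standard Gaussian law give
\begin{equation*}
\int_{\mathcal H}H_k(N)\,N^aP\,dm=\bigg(\int_{\mathbb R}H_k(t)\,t^a\,\frac{e^{-t^2/2}}{\sqrt{2\pi}}\,dt\bigg)\bigg(\int_{\mathcal H}P\,dm\bigg),
\end{equation*}
and the first factor vanishes for $a<k$: by the Rodrigues formula $H_k(t)=(-1)^k e^{t^2/2}\frac{d^k}{dt^k}e^{-t^2/2}$ and $k$ integrations by parts (the boundary terms being zero), $\int_{\mathbb R}H_k(t)t^a\frac{e^{-t^2/2}}{\sqrt{2\pi}}\,dt=\frac{1}{\sqrt{2\pi}}\int_{\mathbb R}\frac{d^k}{dt^k}(t^a)\,e^{-t^2/2}\,dt=0$, since $\frac{d^k}{dt^k}(t^a)\equiv0$ when $a<k$. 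Hence every term vanishes, which proves \emph{(b)} and completes the argument.

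The step I expect to require the most care is \emph{(b)}: one must exploit the joint Gaussianity of $\mathcal{G}$ to split off the $N$-dependence of the mixed product $g_1\cdots g_j$ and reduce to pure powers $N^a$ with $a<k$, after which the classical orthogonality of $H_k$ to all polynomials of degree less than $k$ finishes the proof. The rest — the shape of $\sigma_x^k H_k(t/\sigma_x)$, the membership $g\in\mathcal{G}$, and the identification of $\mathcal{P}_k(g^k)$ via uniqueness of orthogonal decompositions — is routine.
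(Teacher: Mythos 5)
Your proof is correct. It arrives at the same identity $:(\mathfrak{Re}\langle x,\cdot\rangle)^k:\,=\sigma_x^k H_k(\mathfrak{Re}\langle x,\cdot\rangle/\sigma_x)$ and, like the paper, ultimately rests on the classical orthogonality of $H_k$ to all polynomials of degree less than $k$ against the Gaussian weight; but the key reduction is handled differently. The paper quotes Theorem $3.4$ of \cite{J}, according to which the Wick transform of $(\mathfrak{Re}\langle x,\cdot\rangle)^k$ does not depend on the ambient Gaussian space: this immediately gives $:g^k:\,=Q_k(g)$ for a monic polynomial $Q_k$ in the single variable $g=\mathfrak{Re}\langle x,\cdot\rangle$, and the conditions $\int_{\mathcal H}Q_k(g)g^j\,dm=0$ for $j<k$ then identify $Q_k$ with the rescaled Hermite polynomial. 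You instead guess the answer and verify it against the definition of the Wick transform, and the point you must supply --- that $H_k(g/\sigma_x)$ is orthogonal to \emph{all} of $\mathcal{G}^j$ for $j<k$, not merely to the powers $g^j$ --- is precisely what the citation buys the paper. Your splitting $g_i=c_iN+h_i$ with $h_i\perp N$, combined with the fact that uncorrelated jointly Gaussian variables are independent, is a clean self-contained proof of that point, and the reduction to $\int_{\mathbb R}H_k(t)t^a e^{-t^2/2}\,dt=0$ for $a<k$ is handled correctly. The trade-off is the usual one: your argument is longer but avoids the black box; the paper's is shorter but leans on \cite{J}. The only hypothesis you should make explicit to yourself is that every finite family of elements of the closed span $\mathcal{G}$ is jointly Gaussian (an $L^2$-limit of jointly Gaussian vectors is jointly Gaussian), since that is what licenses the passage from orthogonality to independence.
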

\begin{proof}
The Wick transform of $(\mathfrak{Re}\langle x,\cdot \rangle)^k$ does not depend of the Gaussian space which contains $\mathfrak{Re}\langle x,\cdot\rangle$ (see for instance \cite{J}, Theorem $3.4$). Then there exists a monic polynomial with real coefficients $Q_k$ such that
\begin{equation*} 
:(\mathfrak{Re}\langle x,\cdot \rangle)^k:=Q_k(\mathfrak{Re}\langle x,\cdot \rangle).
\end{equation*}
By definition of the Wick transform, we know that for every integer $j$ in $\{0,\dots,k-1\}$, we have
\begin{equation*}
\int_{\mathcal{H}}:(\mathfrak{Re}\langle x,\cdot\rangle)^k:(z)(\mathfrak{Re}\langle x,z\rangle)^j\,dm(z)=\int_{\mathcal{H}}Q_k(\mathfrak{Re}\langle x,z\rangle)(\mathfrak{Re}\langle x,z\rangle)^j\,dm(z)=0.
\end{equation*}
We now use the fact that the random variable $\mathfrak{Re}\langle x,\cdot \rangle$ has Gaussian distribution $\gamma_{\sigma_x}$ and we get
\begin{equation*}
\int_{\mathbb{R}}Q_k(\sigma_x\,s)\,s^j\,e^{-s^2/2}\,ds=0
\end{equation*}
for any $j$ in $\{0,\dots,k-1\}$. The conclusion follows from the definition of Hermite polynomials $(H_\ell)_{\ell\ge 0}$ which is the sequence of monic polynomials in the weighted space $L^2(\mathbb{R},e^{-s^2/2}\,ds)$ which orthogonalizes the polynomials $t^k$ in this space. Indeed, we proved that $H_k=\frac{Q_k(\sigma_x\,\cdot)}{\sigma_x^k}$, that is $Q_k=\sigma_x^k\,H_k(\frac{\cdot}{\sigma_x})$ and we finally find that
\begin{equation*}
:(\mathfrak{Re}\langle x,\cdot \rangle)^k:=Q_k(\mathfrak{Re}\langle x,\cdot  \rangle)=\sigma_x^k\, H_k\bigg(\frac{\mathfrak{Re}\langle x,\cdot \rangle}{\sigma_x}\bigg).
\end{equation*}
\end{proof}
We now want to give a canonical decompostion of a function belonging to the space $:\mathcal{G}^k:$ by using the properties of the orthonormal basis $(e_n)_{n\in \mathbb{N}}$. To do this, the following lemma will be useful.
\begin{Lem}\label{correlation0}
For every $k$-tuples $(j_1,\dots,j_k)$ and $(\ell_1,\dots,\ell_k)$ of integers different from zero, we have
\begin{align*}
\int_{\mathcal{H}}&:\mathfrak{Re}\langle \mathfrak{e}_{j_1},\cdot\rangle\dots\mathfrak{Re}\langle \mathfrak{e}_{j_k},\cdot\rangle:(x):\mathfrak{Re}\langle \mathfrak{e}_{\ell_1},\cdot\rangle\dots\mathfrak{Re}\langle \mathfrak{e}_{\ell_k},\cdot\rangle:(x)\,dm(x)\\
&=\sum_{\tau\in \mathfrak{S}_k}\big\langle \mathfrak{Re}\langle \mathfrak{e}_{j_1},\cdot\rangle,\mathfrak{Re}\langle \mathfrak{e}_{\ell_{\tau(1)}},\cdot\rangle\big\rangle_{L^2(m)}\dots\big\langle \mathfrak{Re}\langle \mathfrak{e}_{j_k},\cdot\rangle,\mathfrak{Re}\langle \mathfrak{e}_{\ell_{\tau(k)}},\cdot\rangle\big\rangle_{L^2(m)}.
\end{align*}
\end{Lem}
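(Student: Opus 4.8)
The idea is to deduce the identity from the Fock space picture set up just above, i.e.\ from the isometry \eqref{isometrie}. Put $f_i:=\mathfrak{Re}\langle\mathfrak{e}_{j_i},\cdot\rangle$ and $h_i:=\mathfrak{Re}\langle\mathfrak{e}_{\ell_i},\cdot\rangle$; these lie in $:\mathcal{G}^1:=\mathcal{G}$, so $:f_1\cdots f_k:$ and $:h_1\cdots h_k:$ are genuine elements of $:\mathcal{G}^k:$ in the domain of \eqref{isometrie}. Since \eqref{isometrie} is an isometry from $\big(:\mathcal{G}^k:,\langle\cdot,\cdot\rangle_{L^2(m)}\big)$ onto $\big(\mathcal{G}_{\odot}^k,\langle\cdot,\cdot\rangle_{\odot}\big)$, it preserves inner products, and hence the left-hand side of the lemma equals
\[
\big\langle\,\text{Sym}(f_1\otimes\cdots\otimes f_k)\,,\,\text{Sym}(h_1\otimes\cdots\otimes h_k)\,\big\rangle_{\odot}.
\]

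Next I would compute this symmetric-tensor inner product. By \eqref{scalarproduct} it equals $k!$ times the corresponding inner product for $\langle\cdot,\cdot\rangle_{\otimes}$. Now $\text{Sym}$ is an orthogonal projection of $\bigotimes_k\mathcal{G}$ onto $\mathcal{G}_{\odot}^k$ for $\langle\cdot,\cdot\rangle_{\otimes}$: for each $\tau\in\mathfrak{S}_k$ the coordinate permutation operator $g_1\otimes\cdots\otimes g_k\longmapsto g_{\tau(1)}\otimes\cdots\otimes g_{\tau(k)}$ is unitary, with adjoint the operator associated with $\tau^{-1}$, so $\text{Sym}$, being the average of these operators over $\mathfrak{S}_k$ as in \eqref{sym}, is self-adjoint and idempotent. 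Using self-adjointness and idempotence of $\text{Sym}$ to rewrite the above inner product as $\langle f_1\otimes\cdots\otimes f_k,\,\text{Sym}(h_1\otimes\cdots\otimes h_k)\rangle_{\otimes}$, then expanding $\text{Sym}$ by \eqref{sym} and using that $\langle\cdot,\cdot\rangle_{\otimes}$ factorizes on simple tensors into a product of $L^2(m)$ inner products, I obtain
\[
\big\langle\,\text{Sym}(f_1\otimes\cdots\otimes f_k)\,,\,\text{Sym}(h_1\otimes\cdots\otimes h_k)\,\big\rangle_{\otimes}=\frac{1}{k!}\sum_{\tau\in\mathfrak{S}_k}\prod_{i=1}^k\langle f_i,h_{\tau(i)}\rangle_{L^2(m)}.
\]

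Multiplying by $k!$ cancels the factor $1/k!$, and substituting back the values of $f_i$ and $h_i$ yields precisely the asserted formula, with the permutation acting on the $\ell$-tuple as in the statement. There is no genuine difficulty beyond the bookkeeping of the $k!$'s; the only point that deserves a sentence is the self-adjointness of $\text{Sym}$ for $\langle\cdot,\cdot\rangle_{\otimes}$, together with the remark that no density or approximation argument is needed here, since the Wick products in the statement are literally images under \eqref{isometrie} of simple symmetric tensors.

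As a cross-check one can also argue by a direct diagram-type computation: group the repeated indices among $j_1,\dots,j_k$ and among $\ell_1,\dots,\ell_k$, use that the family $\big(\mathfrak{Re}\langle\mathfrak{e}_k,\cdot\rangle\big)_{k\in\mathbb{Z}^*}$ consists of independent Gaussian variables (Proposition \ref{basis} and Corollary \ref{dec}), rewrite each Wick power as a Hermite polynomial in the associated normalized variable via Proposition \ref{hermite}, and invoke the orthogonality relations $\int_{\mathbb{R}}H_m(s)H_n(s)\,e^{-s^2/2}\,\frac{ds}{\sqrt{2\pi}}=n!\,\delta_{mn}$ together with independence; counting the admissible matchings of Hermite factors then reproduces the sum over $\mathfrak{S}_k$. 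I would nonetheless present the Fock space argument, which is the shortest given the machinery just developed.
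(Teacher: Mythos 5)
Your proof is correct and follows essentially the same route as the paper: pass through the isometry \eqref{isometrie} to $\mathcal{G}_{\odot}^k$, use \eqref{scalarproduct} to pick up the factor $k!$, and expand $\mathrm{Sym}$ via \eqref{sym} to obtain the sum over $\mathfrak{S}_k$. The only (harmless) difference is that you absorb one of the two $\mathrm{Sym}$'s using its self-adjointness and idempotence, whereas the paper expands both and collapses the resulting double sum over $\mathfrak{S}_k\times\mathfrak{S}_k$ directly.
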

\begin{proof}
This is a consequence of the fact that the map between $\big(:\mathcal{G}^k:,\langle \cdot,\cdot\rangle_{L^2(m)}\big)$ and $\big(\mathcal{G}_{\odot}^k,\langle \cdot,\cdot \rangle_{\odot}\big)$ given by \eqref{isometrie} is an isometry. So
\begin{align*}
\mathcal{I}(j_1,\dots,j_k\,&;\, \ell_1,\dots,\ell_k)\\
&:=\int_{\mathcal{H}}:\mathfrak{Re}\langle \mathfrak{e}_{j_1},\cdot\rangle\dots\mathfrak{Re}\langle \mathfrak{e}_{j_k},\cdot\rangle:(x):\mathfrak{Re}\langle \mathfrak{e}_{\ell_1},\cdot\rangle\dots\mathfrak{Re}\langle \mathfrak{e}_{\ell_k},\cdot\rangle:(x)\,dm(x)\\
&=\big\langle \text{Sym}\big(\mathfrak{Re}\langle \mathfrak{e}_{j_1},\cdot\rangle\otimes\dots\otimes\mathfrak{Re}\langle \mathfrak{e}_{j_k},\cdot\rangle\big) , \text{Sym}\big(\mathfrak{Re}\langle \mathfrak{e}_{\ell_1},\cdot\rangle\otimes\dots\otimes\mathfrak{Re}\langle \mathfrak{e}_{\ell_k},\cdot\rangle\big)\big\rangle_{\odot}\\
&=k!\,\big\langle \text{Sym}\big(\mathfrak{Re}\langle  \mathfrak{e}_{j_1},\cdot\rangle\otimes\dots\otimes\mathfrak{Re}\langle \mathfrak{e}_{j_k},\cdot\rangle\big) , \text{Sym}\big(\mathfrak{Re}\langle \mathfrak{e}_{\ell_1},\cdot\rangle\otimes\dots\otimes\mathfrak{Re}\langle \mathfrak{e}_{\ell_k},\cdot\rangle\big)\big\rangle_{\otimes}
\end{align*}
where the last equality comes from \eqref{scalarproduct}. By using the expression \eqref{sym} of the function \text{Sym}, we get by definition of the scalar product $\displaystyle \langle \cdot, \cdot\rangle_{\otimes}$ that
\begin{align*}
\mathcal{I}(j_1,\dots,j_k\,&;\, \ell_1,\dots,\ell_k)\\
&=\frac{1}{k!}\sum_{\substack{\sigma\in \mathfrak{S}_k\\ \tau \in \mathfrak{S}_k}}\big\langle \mathfrak{Re}\langle \mathfrak{e}_{j_{\sigma(1)}},\cdot\rangle\otimes\dots\otimes\mathfrak{Re}\langle \mathfrak{e}_{j_{\sigma(k)}},\cdot\rangle, \mathfrak{Re}\langle \mathfrak{e}_{\ell_{\tau(1)}},\cdot\rangle\otimes\dots\otimes\mathfrak{Re}\langle \mathfrak{e}_{\ell_{\tau(k)}},\cdot\rangle\big\rangle_{\otimes}\\
&=\frac{1}{k!}\sum_{\substack{\sigma\in \mathfrak{S}_k\\ \tau \in \mathfrak{S}_k}}\big\langle \mathfrak{Re}\langle \mathfrak{e}_{j_{\sigma(1)}},\cdot\rangle,\mathfrak{Re}\langle \mathfrak{e}_{\ell_{\tau(1)}},\cdot\rangle\big\rangle_{L^2(m)}\dots \big\langle \mathfrak{Re}\langle \mathfrak{e}_{j_{\sigma(k)}},\cdot\rangle,\mathfrak{Re}\langle \mathfrak{e}_{\ell_{\tau(k)}},\cdot\rangle\big\rangle_{L^2(m)}\\
&=\sum_{\omega\in \mathfrak{S}_k}\big\langle \mathfrak{Re}\langle \mathfrak{e}_{j_1},\cdot\rangle,\mathfrak{Re}\langle \mathfrak{e}_{\ell_{\omega(1)}},\cdot\rangle\big\rangle_{L^2(m)}\dots\big\langle \mathfrak{Re}\langle \mathfrak{e}_{j_k},\cdot\rangle,\mathfrak{Re}\langle \mathfrak{e}_{\ell_{\omega(k)}},\cdot\rangle\big\rangle_{L^2(m)}.
\end{align*}
\end{proof}
\noindent This computation allows us to find an orthogonal basis of the space $:\mathcal{G}^k:$.
\begin{Prop}\label{basis2}
For any positive integer $k$, an orthogonal basis of the space $:\mathcal{G}^k:$ is given by the family
\begin{equation}\label{basis3}
\big(:\mathfrak{Re}\langle \mathfrak{e}_{j_1},\cdot\rangle\dots\mathfrak{Re}\langle \mathfrak{e}_{j_k},\cdot\rangle:\big)_{\substack{(j_1,\dots, j_k)\in (\mathbb{Z}^*)^k\\j_1\le\dots\le j_k}}.
\end{equation}
\end{Prop}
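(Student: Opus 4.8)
The plan is to transport the statement to the symmetric Fock space via the canonical isometry \eqref{isometrie} and there verify separately the two defining properties of an orthogonal basis, namely orthogonality (together with non-vanishing of the vectors) and totality. Under the isometry from $\big(:\mathcal{G}^{k}:,\langle\cdot,\cdot\rangle_{L^2(m)}\big)$ onto $\big(\mathcal{G}_{\odot}^{k},\langle\cdot,\cdot\rangle_{\odot}\big)$, the vector $:\mathfrak{Re}\langle\mathfrak{e}_{j_1},\cdot\rangle\cdots\mathfrak{Re}\langle\mathfrak{e}_{j_k},\cdot\rangle:$ corresponds to $\text{Sym}\big(\mathfrak{Re}\langle\mathfrak{e}_{j_1},\cdot\rangle\otimes\cdots\otimes\mathfrak{Re}\langle\mathfrak{e}_{j_k},\cdot\rangle\big)$, so it is enough to prove that the family of symmetrised tensors $\big(\text{Sym}(\mathfrak{Re}\langle\mathfrak{e}_{j_1},\cdot\rangle\otimes\cdots\otimes\mathfrak{Re}\langle\mathfrak{e}_{j_k},\cdot\rangle)\big)$, indexed by the nondecreasing tuples $(j_1\le\cdots\le j_k)$ in $(\mathbb{Z}^*)^k$, is an orthogonal basis of $\mathcal{G}_{\odot}^{k}$.

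For orthogonality and non-vanishing I would read off the pairwise inner products directly from Lemma \ref{correlation0}: for nondecreasing tuples $(j_1,\dots,j_k)$ and $(\ell_1,\dots,\ell_k)$ the inner product equals $\sum_{\tau\in\mathfrak{S}_k}\prod_{i=1}^{k}\big\langle\mathfrak{Re}\langle\mathfrak{e}_{j_i},\cdot\rangle,\mathfrak{Re}\langle\mathfrak{e}_{\ell_{\tau(i)}},\cdot\rangle\big\rangle_{L^2(m)}$. By Corollary \ref{dec} each factor is $\sigma_{j_i}^2$ when $j_i=\ell_{\tau(i)}$ and $0$ otherwise, so a term of the sum survives only if $\tau$ carries $(\ell_1,\dots,\ell_k)$ entrywise onto $(j_1,\dots,j_k)$, which — both tuples being nondecreasing — happens exactly when the two tuples coincide. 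Hence distinct tuples give orthogonal vectors, while for a single tuple the inner product is a sum of nonnegative terms containing the strictly positive contribution $\prod_{i}\sigma_{j_i}^2$ of the identity permutation; it is strictly positive because the nondegeneracy of $m$ forces the covariance operator $R$ to be injective, so that every $\sigma_j^2>0$. In particular no vector of the family is zero.

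For totality I would work inside $\mathcal{G}_{\odot}^{k}=\text{Sym}\big(\bigotimes_k\mathcal{G}\big)$. The elementary tensors $g_1\otimes\cdots\otimes g_k$ with $g_i\in\mathcal{G}$ span a dense subspace of $\bigotimes_k\mathcal{G}$, and $\text{Sym}$ is a norm-one projection onto $\mathcal{G}_{\odot}^{k}$, so the symmetrised elementary tensors span a dense subspace of $\mathcal{G}_{\odot}^{k}$. Since each $g_i$ lies in $\mathcal{G}=\overline{\text{span}}[\mathfrak{Re}\langle\mathfrak{e}_k,\cdot\rangle\,;\,k\in\mathbb{Z}^*]$, continuity of the tensor product and of $\text{Sym}$ shows that every $\text{Sym}(g_1\otimes\cdots\otimes g_k)$ is a limit of finite linear combinations of vectors $\text{Sym}(\mathfrak{Re}\langle\mathfrak{e}_{j_1},\cdot\rangle\otimes\cdots\otimes\mathfrak{Re}\langle\mathfrak{e}_{j_k},\cdot\rangle)$ with $(j_1,\dots,j_k)\in(\mathbb{Z}^*)^k$; as $\text{Sym}$ is insensitive to the order of its arguments, each such tuple can be rearranged into a nondecreasing one. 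Thus the family is total in $\mathcal{G}_{\odot}^{k}$, and combined with the previous step it is an orthogonal basis of $\mathcal{G}_{\odot}^{k}$, which transports back through \eqref{isometrie} to the asserted orthogonal basis of $:\mathcal{G}^{k}:$.

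The main obstacle is not computational: it is to be careful with the two passages to closures in the totality step — density of the elementary tensors in the Hilbert tensor product and continuity of $\otimes$ and of $\text{Sym}$ — which is exactly the reason it is convenient to argue in the Fock space rather than directly with products of Gaussian variables in $L^2(\mathcal{H},\mathcal{B},m)$; one should also not overlook the small but necessary point that it is the nondegeneracy of $m$ that keeps every $\sigma_j^2$ strictly positive, so that the listed vectors genuinely form a basis and not merely an orthogonal family.
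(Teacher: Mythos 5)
Your proof is correct and follows essentially the same route as the paper: orthogonality is read off from Lemma \ref{correlation0} together with the orthogonality of the variables $\mathfrak{Re}\langle\mathfrak{e}_p,\cdot\rangle$, every permutation term vanishing as soon as the two nondecreasing tuples differ. The paper dispenses with totality (and implicitly with non-vanishing) via the single clause ``by definition of the space $:\mathcal{G}^k:$'', whereas you spell out the density argument in the symmetric Fock space and the strict positivity of the $\sigma_j^2$ coming from nondegeneracy of $m$; these additions are sound and simply make the argument more complete.
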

\begin{proof}
By definition of the space $:\mathcal{G}^k:$, the only thing we need to prove is that the sequence \eqref{basis3} is orthogonal. If $j_1\le \dots \le j_k$ and $\ell_1\le \dots \le \ell_k$ are two different $k$-tuples, then the orthogonality of the sequence $(\mathfrak{Re}\langle \mathfrak{e}_p,\cdot\rangle)_{p\in \mathbb{Z}^*}$ shows that 
\begin{equation*}
\big\langle \mathfrak{Re}\langle \mathfrak{e}_{j_1},\cdot\rangle,\mathfrak{Re}\langle \mathfrak{e}_{\ell_{\tau(1)}},\cdot\rangle\big\rangle_{L^2(m)}\dots\big\langle \mathfrak{Re}\langle \mathfrak{e}_{j_k},\cdot\rangle,\mathfrak{Re}\langle \mathfrak{e}_{\ell_{\tau(k)}},\cdot\rangle\big\rangle_{L^2(m)}=0
\end{equation*}
for every $\tau \in \mathfrak{S}_k$ and the conclusion follows from Lemma \ref{correlation0}.
\end{proof}

\noindent In order to give explicitly the expansion of a function of the space $:\mathcal{G}^k:$ with respect to the basis \eqref{basis3}, we also need to determine the variance with respect to the measure $m$ of an element of this basis.
\begin{Prop}\label{variance}
For every $k$-tuple $(j_1,\dots,j_r)$ of integers different from zero such that $j_1<\dots<j_r$, and for every $k$-tuple $(\ell_1,\dots, \ell_r)$ of positive integers, we have
\begin{equation*}
\mathrm{var}_{m}\big[:(\mathfrak{Re}\langle \mathfrak{e}_{j_1},\cdot \rangle)^{\ell_1}\dots(\mathfrak{Re}\langle \mathfrak{e}_{j_r},\cdot\rangle)^{\ell_r}:\big]=\ell_1!\dots\ell_r!\,\sigma_{j_1}^{2\ell_1}\dots\sigma_{j_r}^{2\ell_r}.
\end{equation*}
\end{Prop}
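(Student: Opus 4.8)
The plan is to reduce the claim about the variance of a Wick-transformed monomial to the combinatorial identity of Lemma \ref{correlation0}, applied with a carefully chosen pair of $k$-tuples. First I would set $k=\ell_1+\dots+\ell_r$ and write the monomial $(\mathfrak{Re}\langle \mathfrak{e}_{j_1},\cdot\rangle)^{\ell_1}\dots(\mathfrak{Re}\langle \mathfrak{e}_{j_r},\cdot\rangle)^{\ell_r}$ as a product of $k$ linear factors, i.e. choose the $k$-tuple $(p_1,\dots,p_k)$ obtained by listing $j_1$ exactly $\ell_1$ times, then $j_2$ exactly $\ell_2$ times, and so on. Then $\mathrm{var}_m$ of the Wick transform equals $\mathcal{I}(p_1,\dots,p_k;p_1,\dots,p_k)$ in the notation of Lemma \ref{correlation0}, because the Wick transform lies in $:\mathcal{G}^k:$ which is orthogonal to all lower chaoses, so the square of its $L^2(m)$-norm is exactly the integral computed there, and the constant term and lower-order corrections contribute nothing.

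Next I would evaluate the sum $\sum_{\tau\in\mathfrak{S}_k}\prod_{s=1}^k\langle \mathfrak{Re}\langle \mathfrak{e}_{p_s},\cdot\rangle,\mathfrak{Re}\langle \mathfrak{e}_{p_{\tau(s)}},\cdot\rangle\rangle_{L^2(m)}$. Since the $j_i$ are distinct, by Corollary \ref{dec} the inner product $\langle \mathfrak{Re}\langle \mathfrak{e}_{p_s},\cdot\rangle,\mathfrak{Re}\langle \mathfrak{e}_{p_{\tau(s)}},\cdot\rangle\rangle_{L^2(m)}$ vanishes unless $p_s=p_{\tau(s)}$, in which case it equals $\sigma_{p_s}^2$ (using Lemma \ref{rotations} together with $\langle Re_\ell,e_\ell\rangle = 2\sigma_\ell^2$, so that $\tfrac12\mathfrak{Re}\langle R\mathfrak{e}_k,\mathfrak{e}_k\rangle=\sigma_k^2$). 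Hence the only surviving permutations $\tau$ are those that preserve each of the $r$ "blocks" of equal indices; these are exactly the permutations in the product subgroup $\mathfrak{S}_{\ell_1}\times\dots\times\mathfrak{S}_{\ell_r}$, of which there are $\ell_1!\cdots\ell_r!$. For each such $\tau$ the product telescopes to $\prod_{i=1}^r(\sigma_{j_i}^2)^{\ell_i}=\sigma_{j_1}^{2\ell_1}\cdots\sigma_{j_r}^{2\ell_r}$, and summing over the $\ell_1!\cdots\ell_r!$ permutations yields the stated formula.

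The only real subtlety — the step I would be most careful about — is the bookkeeping that identifies the set of non-vanishing permutations with the Young-type subgroup $\mathfrak{S}_{\ell_1}\times\dots\times\mathfrak{S}_{\ell_r}$: one must check that a permutation of $\{1,\dots,k\}$ satisfies $p_s=p_{\tau(s)}$ for all $s$ if and only if it maps each consecutive block of indices corresponding to a fixed $j_i$ to itself, which is immediate once one notes the $p_s$ take the value $j_i$ on a block of size $\ell_i$ and the $j_i$ are pairwise distinct. An alternative, slicker route is to invoke Proposition \ref{hermite}: since the $\mathfrak{Re}\langle \mathfrak{e}_{j_i},\cdot\rangle$ are independent centred Gaussians (Proposition \ref{basis}), the Wick transform of the monomial factorizes as $\prod_{i=1}^r \sigma_{j_i}^{\ell_i}H_{\ell_i}\!\big(\mathfrak{Re}\langle \mathfrak{e}_{j_i},\cdot\rangle/\sigma_{j_i}\big)$, and then $\mathrm{var}_m$ is the product over $i$ of $\sigma_{j_i}^{2\ell_i}\int_{\mathbb{R}} H_{\ell_i}(t)^2\,d\gamma_1(t)=\sigma_{j_i}^{2\ell_i}\,\ell_i!$, using the classical normalization $\|H_\ell\|_{L^2(\gamma_1)}^2=\ell!$. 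Either argument gives the result; I would present the combinatorial one for consistency with Lemma \ref{correlation0}, and perhaps remark on the Hermite-polynomial shortcut.
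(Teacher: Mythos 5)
Your main argument is correct, but it takes a genuinely different (and somewhat more self-contained) route than the paper. You apply Lemma \ref{correlation0} once, globally, to the $k$-tuple obtained by listing each $j_i$ with multiplicity $\ell_i$, observe that the variance equals the second moment because the Wick transform is orthogonal to the constants, and then count the permutations for which every factor $\big\langle \mathfrak{Re}\langle \mathfrak{e}_{p_s},\cdot\rangle,\mathfrak{Re}\langle \mathfrak{e}_{p_{\tau(s)}},\cdot\rangle\big\rangle_{L^2(m)}$ survives; the identification of these with the block-preserving permutations $\mathfrak{S}_{\ell_1}\times\dots\times\mathfrak{S}_{\ell_r}$ and the evaluation $\tfrac12\mathfrak{Re}\langle R\mathfrak{e}_j,\mathfrak{e}_j\rangle=\sigma_j^2$ are both right, including the case where some $j_i$ are negative (Corollary \ref{dec} covers the cross terms between $\mathfrak{Re}\langle e_\ell,\cdot\rangle$ and $\mathfrak{Im}\langle e_\ell,\cdot\rangle$). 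The paper instead first factorizes the Wick transform of the monomial as a product of one-variable Wick transforms via Fact \ref{mutually}, argues that these factors are independent (being measurable functions of independent Gaussians, by Proposition \ref{hermite}), so that the variance of the centred product is the product of the variances, and only then applies Lemma \ref{correlation0} to each single-variable factor to get $\ell_t!\,\sigma_{j_t}^{2\ell_t}$. Your sketched ``alternative, slicker route'' via Hermite polynomials and independence is essentially the paper's proof. What your combinatorial route buys is that it needs neither Fact \ref{mutually} nor the independence argument; what the paper's route buys is a shorter computation once the Wick-product factorization is granted. Either way the answer and the constants agree.
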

\begin{proof}
Since $j_1<\dots<j_r$, the random variables $\mathfrak{Re}\langle \mathfrak{e}_{j_1},\cdot\rangle$,$\dots$, $\mathfrak{Re}\langle \mathfrak{e}_{j_r},\cdot\rangle$ are orthogonal. The fact below is a particular case of a more general statement which can be found in \cite{J} (Chapter $3$, Theorem $3.20$) and which computes the Wick transform of a product of orthogonal functions.
\begin{Fac}\label{mutually}
Under the assumptions of Proposition \ref{variance}, we have
\begin{equation*}
:(\mathfrak{Re}\langle \mathfrak{e}_{j_1},\cdot \rangle)^{\ell_1}\dots(\mathfrak{Re}\langle \mathfrak{e}_{j_r},\cdot\rangle)^{\ell_r}:=:(\mathfrak{Re}\langle \mathfrak{e}_{j_1},\cdot \rangle)^{\ell_1}:\dots:(\mathfrak{Re}\langle \mathfrak{e}_{j_r},\cdot\rangle)^{\ell_r}:.
\end{equation*}
\end{Fac}
\noindent We know that the random variables $\mathfrak{Re}\langle \mathfrak{e}_i,\cdot \rangle$ are independent since they are orthogonal real Gaussian variables. Since the Wick transform of $(\mathfrak{Re}\langle x,\cdot\rangle)^p$ is a measurable function in the variable $\mathfrak{Re}\langle x,\cdot\rangle$ according to Proposition \ref{hermite}, the random variables $:(\mathfrak{Re}\langle \mathfrak{e}_{j_1},\cdot \rangle)^{\ell_1}:,\dots,:(\mathfrak{Re}\langle \mathfrak{e}_{j_r},\cdot \rangle)^{\ell_r}:$ are independent. Then we get
\begin{eqnarray*}
\text{var}_{m}\big[:(\mathfrak{Re}\langle \mathfrak{e}_{j_1},\cdot\rangle)^{\ell_1}\dots(\mathfrak{Re}\langle \mathfrak{e}_{j_r},\cdot\rangle)^{\ell_r}:\big]=\prod_{t=1}^{r}\text{var}_{m}\big[:(\mathfrak{Re}\langle \mathfrak{e}_{j_t},\cdot\rangle)^{\ell_t}:\big]=\prod_{t=1}^{r}\ell_t!\,\sigma_{j_t}^{2\ell_t},
\end{eqnarray*}
where the computation of each variance in the last equality follows directly to Lemma \ref{correlation0}.
\end{proof}
\noindent According to Proposition \ref{basis2} and Proposition \ref{variance}, we have the following decomposition of a function $f_k$ in $:\mathcal{G}^k:$.
\begin{Prop}
 A function $f_k$ which belongs to $:\mathcal{G}^k:$ can be written in a unique way as
\begin{equation}\label{developpement}
f_k=\sum_{\substack{(j_1,\dots, j_k)\in (\mathbb{Z}^*)^k\\j_1\le \dots \le j_k}}a_{j_1,\dots,j_{k}}^{(k)}:\mathfrak{Re}\langle \mathfrak{e}_{j_1},\cdot\rangle\dots \mathfrak{Re}\langle \mathfrak{e}_{j_{k}},\cdot\rangle :
\end{equation}
where the real numbers $a_{j_1,\dots,j_k}^{(k)}$ are given by the formula
\begin{equation}\label{expressioncoefficient}
a_{j_1,\dots, j_k}^{(k)}=\frac{\big\langle f_k,:\mathfrak{Re}\langle \mathfrak{e}_{j_1},\cdot\rangle\dots\mathfrak{Re}\langle \mathfrak{e}_{j_k},\cdot\rangle:\big\rangle_{L^2(m)}}{\mathrm{var}_m\big[:\mathfrak{Re}\langle \mathfrak{e}_{j_1},\cdot\rangle\dots \mathfrak{Re}\langle \mathfrak{e}_{j_k},\cdot\rangle:\big]}
\end{equation}
and satisfy the condition
\begin{equation}\label{condition}
\sum_{j_1\le\dots\le j_k}\big\vert a_{j_1,\dots,j_k}^{(k)}\big\vert^2\sigma_{j_1}^2\dots\sigma_{j_k}^2<+\infty.
\end{equation}
\end{Prop}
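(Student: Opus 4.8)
The plan is to deduce everything from the fact, already established in Proposition \ref{basis2}, that the family \eqref{basis3} is an \emph{orthogonal basis} of the Hilbert space $\big(:\mathcal{G}^k:,\langle\cdot,\cdot\rangle_{L^2(m)}\big)$. Write $u_{j_1,\dots,j_k}$ for the basis vector $:\mathfrak{Re}\langle \mathfrak{e}_{j_1},\cdot\rangle\dots\mathfrak{Re}\langle \mathfrak{e}_{j_k},\cdot\rangle:$ indexed by $j_1\le\dots\le j_k$. Since an orthogonal basis of a Hilbert space yields a unique (convergent) expansion of each vector, every $f_k\in{:}\mathcal{G}^k{:}$ can be written in exactly one way as in \eqref{developpement}, and the coefficients are necessarily $a^{(k)}_{j_1,\dots,j_k}=\langle f_k,u_{j_1,\dots,j_k}\rangle_{L^2(m)}/\|u_{j_1,\dots,j_k}\|^2_{L^2(m)}$. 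To match this with \eqref{expressioncoefficient} I would simply note that for $k\ge 1$ the Wick transform $u_{j_1,\dots,j_k}$ is, by its very definition, orthogonal to the constants (which form $\mathcal{G}^0=\mathbb{R}$, one of the subspaces projected away in forming the Wick transform); hence $u_{j_1,\dots,j_k}$ is centered and $\|u_{j_1,\dots,j_k}\|^2_{L^2(m)}=\mathrm{var}_m(u_{j_1,\dots,j_k})$, which is exactly the denominator in \eqref{expressioncoefficient}.

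It then remains to prove the summability condition \eqref{condition}. For this I would apply Parseval's identity to the orthogonal basis \eqref{basis3}:
\[
\|f_k\|^2_{L^2(m)}=\sum_{j_1\le\dots\le j_k}\big|a^{(k)}_{j_1,\dots,j_k}\big|^2\,\mathrm{var}_m(u_{j_1,\dots,j_k})<+\infty,
\]
and then compare $\mathrm{var}_m(u_{j_1,\dots,j_k})$ with the weight $\sigma_{j_1}^2\dots\sigma_{j_k}^2$ appearing in \eqref{condition}. If the distinct values among $j_1\le\dots\le j_k$ are $i_1<\dots<i_r$ with multiplicities $\ell_1,\dots,\ell_r$ (so that $\ell_1+\dots+\ell_r=k$), Proposition \ref{variance} gives
\[
\mathrm{var}_m(u_{j_1,\dots,j_k})=\ell_1!\cdots\ell_r!\,\sigma_{i_1}^{2\ell_1}\cdots\sigma_{i_r}^{2\ell_r}=\big(\ell_1!\cdots\ell_r!\big)\,\sigma_{j_1}^2\cdots\sigma_{j_k}^2 .
\]
Because $1\le\ell_1!\cdots\ell_r!\le k!$ (a constant once $k$ is fixed), the two nonnegative series $\sum\big|a^{(k)}_{j_1,\dots,j_k}\big|^2\,\mathrm{var}_m(u_{j_1,\dots,j_k})$ and $\sum\big|a^{(k)}_{j_1,\dots,j_k}\big|^2\,\sigma_{j_1}^2\cdots\sigma_{j_k}^2$ converge or diverge together, so the finiteness of the first forces \eqref{condition}.

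Uniqueness in \eqref{developpement} requires no separate argument: an orthogonal family is linearly independent, and pairing any hypothetical relation with a single basis vector isolates the corresponding coefficient. The only mildly delicate point in the whole proof is the elementary bookkeeping with repeated indices and factorials used to pass between $\mathrm{var}_m(u_{j_1,\dots,j_k})$ and the ``clean'' weight $\sigma_{j_1}^2\cdots\sigma_{j_k}^2$; since for fixed $k$ the discrepancy is bounded by $k!$, it is harmless for convergence and the argument goes through without difficulty.
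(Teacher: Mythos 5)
Your proof is correct and follows essentially the same route as the paper: the expansion and the coefficient formula come from the orthogonality of the family \eqref{basis3} (Proposition \ref{basis2}, together with the observation that each Wick transform is centered so its squared $L^2$-norm is its variance), and condition \eqref{condition} follows from Parseval combined with the variance computation of Proposition \ref{variance}. The paper states this in one sentence; you have merely spelled out the same argument, including the harmless factorial discrepancy $1\le \ell_1!\cdots\ell_r!\le k!$ between $\mathrm{var}_m$ and the weight $\sigma_{j_1}^2\cdots\sigma_{j_k}^2$.
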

\begin{proof}
The decomposition directly follows from the orthogonality of the family 
$$
(:\mathfrak{Re}\langle \mathfrak{e}_{j_1},\cdot\rangle\dots \mathfrak{Re}\langle \mathfrak{e}_{j_k},\cdot \rangle:)_{j_1\le\dots\le j_k}
$$ 
and condition \eqref{condition} is a consequence of Proposition \ref{variance}.
\end{proof}
So, according to \eqref{expansion} and \eqref{developpement}, the computation of the correlations of two functions in $L^2_\mathbb{R}(\mathcal{H},\mathcal{B},m)$ can be reduced to the computation of the correlations between the Wick transforms of two homogeneous polynomials.
\subsection{Orthogonality and computation of the correlations}
It is now time to compute the correlations of two functions living in two different spaces $:\mathcal{G}^k:$ and then of two functions belonging to the same space $:\mathcal{G}^k:$.

\begin{Prop}\label{orthogonal}
For any vectors $x,y$ of $\mathcal{H}$ and any nonnegative integers $k, \ell$ such that $k\ne \ell$, we have
\begin{equation*}
\int_{\mathcal{H}}:(\mathfrak{Re}\langle x,\cdot\rangle)^k:(T^nz):(\mathfrak{Re}\langle y,\cdot \rangle)^\ell:(z)\, dm(z)=0
\end{equation*}
for any nonnegative integer $n$.
\end{Prop}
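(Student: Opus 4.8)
The plan is to reduce the identity to the orthogonality of distinct components in the Wiener chaos decomposition $L^2_{\mathbb{R}}(\mathcal{H},\mathcal{B},m)=\bigoplus_{j\ge 0}:\mathcal{G}^j:$. The key point to establish is that precomposition with $T^n$ maps the $k$-th chaos $:\mathcal{G}^k:$ into itself; once this is known, the integral in the statement is exactly the scalar product $\big\langle :(\mathfrak{Re}\langle x,\cdot\rangle)^k:\circ\, T^n,\ :(\mathfrak{Re}\langle y,\cdot\rangle)^\ell:\big\rangle_{L^2(m)}$, a pairing between a vector of $:\mathcal{G}^k:$ and a vector of $:\mathcal{G}^\ell:$, hence it vanishes when $k\neq\ell$.

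A preliminary observation I would record first is that $\mathfrak{Re}\langle v,\cdot\rangle$ belongs to $\mathcal{G}$ for \emph{every} $v\in\mathcal{H}$, not merely for $v$ among the $\mathfrak{e}_k$. Indeed, by Lemma \ref{rotations} the map $v\longmapsto \mathfrak{Re}\langle v,\cdot\rangle$ satisfies $\|\mathfrak{Re}\langle v,\cdot\rangle\|_{L^2(m)}^2=\tfrac{1}{2}\langle Rv,v\rangle\le\tfrac{1}{2}\|R\|\,\|v\|^2$, so it is bounded from $\mathcal{H}$ into $L^2(m)$; consequently $\{v\in\mathcal{H}\,;\,\mathfrak{Re}\langle v,\cdot\rangle\in\mathcal{G}\}$ is a closed real-linear subspace of $\mathcal{H}$ containing all the vectors $e_k=\mathfrak{e}_k$ and $ie_k=\mathfrak{e}_{-k}$, hence it is all of $\mathcal{H}$. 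In particular $(\mathfrak{Re}\langle y,\cdot\rangle)^\ell\in\mathcal{G}^\ell$, so $:(\mathfrak{Re}\langle y,\cdot\rangle)^\ell:$ belongs to $:\mathcal{G}^\ell:$.

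The heart of the argument is the identity
\begin{equation*}
:(\mathfrak{Re}\langle x,\cdot\rangle)^k:\circ\, T^n\ =\ :(\mathfrak{Re}\langle T^{*n}x,\cdot\rangle)^k:\ \in\ :\mathcal{G}^k:.
\end{equation*}
To obtain it I would write $\langle x,T^nz\rangle=\langle T^{*n}x,z\rangle$ and invoke the $T$-invariance of $m$: the random variables $\mathfrak{Re}\langle x,\cdot\rangle$ and $\mathfrak{Re}\langle x,\cdot\rangle\circ T^n=\mathfrak{Re}\langle T^{*n}x,\cdot\rangle$ then have the same distribution, so in particular $\sigma_{T^{*n}x}=\sigma_x$. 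Now Proposition \ref{hermite} gives $:(\mathfrak{Re}\langle x,\cdot\rangle)^k:\,=\sigma_x^k\,H_k\big(\mathfrak{Re}\langle x,\cdot\rangle/\sigma_x\big)$; composing with $T^n$ and using $\sigma_{T^{*n}x}=\sigma_x$ rewrites the right-hand side as $\sigma_{T^{*n}x}^k\,H_k\big(\mathfrak{Re}\langle T^{*n}x,\cdot\rangle/\sigma_{T^{*n}x}\big)$, which is $:(\mathfrak{Re}\langle T^{*n}x,\cdot\rangle)^k:$ by a second application of Proposition \ref{hermite}. Since $\mathfrak{Re}\langle T^{*n}x,\cdot\rangle\in\mathcal{G}$ by the previous paragraph, this function lies in $:\mathcal{G}^k:$. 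The degenerate case $x=0$ (equivalently $\sigma_x=0$) is immediate, the first factor being $0$ when $k\ge 1$ and the constant $1\in\,:\mathcal{G}^0:$ when $k=0$.

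Combining the two steps finishes the proof. The only point calling for care is the displayed identity: one must make sure that composing with $T^n$ does not move $:(\mathfrak{Re}\langle x,\cdot\rangle)^k:$ out of the $k$-th chaos, and the cleanest route is the explicit Hermite description of the Wick transform together with the invariance of the variance $\sigma_x$ under $T$. Everything else is a formal consequence of the orthogonality of the spaces $:\mathcal{G}^j:$, $j\ge 0$.
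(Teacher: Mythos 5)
Your proof is correct and follows essentially the same route as the paper: both rest on the identity $:(\mathfrak{Re}\langle x,\cdot\rangle)^k:\circ\,T^n=:(\mathfrak{Re}\langle T^{*n}x,\cdot\rangle)^k:$ obtained from the polynomial description of the Wick transform in Proposition \ref{hermite}, followed by the orthogonality of the spaces $:\mathcal{G}^j:$. You are in fact slightly more careful than the paper, which hides the equality $\sigma_{T^{*n}x}=\sigma_x$ (coming from the $T$-invariance of $m$) behind the phrase ``by definition of the polynomial $Q_k$'', and which does not spell out that $\mathfrak{Re}\langle v,\cdot\rangle\in\mathcal{G}$ for every $v\in\mathcal{H}$.
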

\begin{proof}
With the notations of Proposition \ref{hermite}, $:(\mathfrak{Re}\langle x,\cdot\rangle)^k:=Q_k(\mathfrak{Re}\langle x,\cdot \rangle)$. Then, we have
\begin{eqnarray*}
:(\mathfrak{Re}\langle x,\cdot\rangle)^k:(T^nz)=Q_k(\mathfrak{Re}\langle x,\cdot \rangle)(T^nz)=Q_k(\mathfrak{Re}\langle T^{*n}x,\cdot\rangle)(z)
\end{eqnarray*}
that is
\begin{equation}\label{interversion}
:(\mathfrak{Re}\langle x,\cdot\rangle)^k:\circ\ T^n=:(\mathfrak{Re}\langle T^{*n}x,\cdot \rangle)^k:
\end{equation}
by definition of the polynomial $Q_k$. The conclusion follows from the orthogonality of the spaces $:\mathcal{G}^j:$. 
\end{proof}
\noindent From this we can easily deduce the general case.
\begin{Cor}\label{nul}
For every functions $f_k$ in $:\mathcal{G}^k:$ and $g_\ell$ in $:\mathcal{G}^{\ell}:$ such that $k\ne \ell$, we have $\mathcal{I}_n(f_k,g_\ell)=0$ for any nonnegative integer $n$.
\end{Cor}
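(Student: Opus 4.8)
The plan is to reduce the general statement to Proposition \ref{orthogonal} by bilinearity and a polarization argument. First I would observe that $\mathcal{I}_n$ is a bounded bilinear form on $L^2_{\mathbb{R}}(\mathcal{H},\mathcal{B},m)$: since $m$ is a probability measure we have $\|\cdot\|_{L^1(m)}\le\|\cdot\|_{L^2(m)}$, and since $T$ is measure‑preserving the Koopman operator $U_n\colon f\mapsto f\circ T^n$ is an isometry of $L^2(m)$, so $|\mathcal{I}_n(f_k,g_\ell)|\le 2\,\|f_k\|_{L^2(m)}\|g_\ell\|_{L^2(m)}$ and in particular the integrand $f_k(T^n\cdot)\,g_\ell(\cdot)$ is in $L^1(m)$. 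Continuity of $\mathcal{I}_n$ in each variable means it suffices to check $\mathcal{I}_n(f_k,g_\ell)=0$ when $f_k$ and $g_\ell$ range over total subsets of $:\mathcal{G}^k:$ and $:\mathcal{G}^\ell:$. If $k=0$ (the case $\ell=0$ being symmetric), then $f_k$ is a constant $c$ and $\mathcal{I}_n(f_k,g_\ell)=c\int_{\mathcal{H}}g_\ell\,dm-c\int_{\mathcal{H}}g_\ell\,dm=0$ straight from \eqref{expressioncorrelation}; so I may assume $k,\ell\ge 1$ and $k\ne\ell$.

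By Proposition \ref{basis2} the functions $:\mathfrak{Re}\langle\mathfrak{e}_{j_1},\cdot\rangle\dots\mathfrak{Re}\langle\mathfrak{e}_{j_k},\cdot\rangle:$ span a dense subspace of $:\mathcal{G}^k:$, and the classical polarization identity rewrites each such product as a finite linear combination of $k$-th powers $:(\mathfrak{Re}\langle x,\cdot\rangle)^k:$ with $x=\sum_{i}\varepsilon_i\mathfrak{e}_{j_i}\in\mathcal{H}$, $\varepsilon\in\{-1,1\}^k$ (this is legitimate because each $\mathfrak{e}_{j_i}$ is a genuine vector of $\mathcal{H}$ and because the map \eqref{interversion}, i.e. the Wick transform on $\mathcal{G}^k$, is the symmetrization). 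Hence, using bilinearity of $\mathcal{I}_n$ once more, it is enough to treat $f_k=:(\mathfrak{Re}\langle x,\cdot\rangle)^k:$ and $g_\ell=:(\mathfrak{Re}\langle y,\cdot\rangle)^\ell:$ for arbitrary $x,y\in\mathcal{H}$. For such $f_k$ with $k\ge 1$ one has $\int_{\mathcal{H}}f_k\,dm=\langle f_k,1\rangle_{L^2(m)}=0$, because $:\mathcal{G}^k:$ is orthogonal to $:\mathcal{G}^0:=\mathbb{R}$ in the decomposition $L^2_{\mathbb{R}}(\mathcal{H},\mathcal{B},m)=\bigoplus_{j\ge 0}:\mathcal{G}^j:$; therefore
\[
\mathcal{I}_n(f_k,g_\ell)=\int_{\mathcal{H}}:(\mathfrak{Re}\langle x,\cdot\rangle)^k:(T^nz)\;:(\mathfrak{Re}\langle y,\cdot\rangle)^\ell:(z)\,dm(z),
\]
which is exactly the quantity shown to be zero in Proposition \ref{orthogonal}. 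This covers all remaining cases and proves the corollary.

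The argument is mostly bookkeeping and I do not expect a real obstacle; the only point needing a little care is the reduction from arbitrary elements of $:\mathcal{G}^k:$ to the special functions $:(\mathfrak{Re}\langle x,\cdot\rangle)^k:$, which uses Proposition \ref{basis2}, polarization, and the continuity of $\mathcal{I}_n$ to go from the dense span back to the whole space. An equivalent and perhaps slicker way to package this is to note directly from \eqref{interversion} that $U_n$ sends $:(\mathfrak{Re}\langle x,\cdot\rangle)^k:$ to $:(\mathfrak{Re}\langle T^{*n}x,\cdot\rangle)^k:\in{:}\mathcal{G}^k{:}$, so that $U_n({:}\mathcal{G}^k{:})\subseteq{:}\mathcal{G}^k{:}$ by density and continuity; then $\mathcal{I}_n(f_k,g_\ell)=\langle U_nf_k,g_\ell\rangle_{L^2(m)}=0$ simply because $:\mathcal{G}^k:\perp:\mathcal{G}^\ell:$ when $k\ne\ell$.
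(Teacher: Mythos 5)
Your proof is correct and follows essentially the same route as the paper's: reduce by (bi)linearity and density to Wick transforms of products of the $\mathfrak{Re}\langle \mathfrak{e}_{j},\cdot\rangle$, use a polarization identity together with the linearity of the Wick transform on $\mathcal{G}^k$ to pass to pure $k$-th powers $:(\mathfrak{Re}\langle x,\cdot\rangle)^k:$, and invoke Proposition \ref{orthogonal}. The only cosmetic differences are that you use the $\{-1,1\}^k$ polarization formula where the paper uses the inclusion--exclusion identity \eqref{multilineaire}, and you spell out the continuity of $\mathcal{I}_n$ and the $k=0$ case, which the paper leaves implicit.
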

\begin{proof}
According to decomposition \eqref{developpement}, it suffices to check that for any tuples $(j_1,\dots,j_k)$ and $(m_1,\dots,m_\ell)$ of integers different from zero such that $j_1\le \dots\le j_k$ and $m_1\le\dots\le m_\ell$, we have
\begin{equation*}
\int_{\mathcal{H}}:\mathfrak{Re}\langle \mathfrak{e}_{j_1},\cdot\rangle\dots\mathfrak{Re}\langle \mathfrak{e}_{j_k},\cdot\rangle:(T^nx):\mathfrak{Re}\langle \mathfrak{e}_{m_1},\cdot\rangle\dots\mathfrak{Re}\langle \mathfrak{e}_{m_\ell},\cdot\rangle:(x)\,dm(x)=0
\end{equation*}
for any nonnegative integer $n$. But the orthogonality follows from Proposition \ref{orthogonal} and from the multilinear identity
\begin{equation}\label{multilineaire}
p!\,\prod_{j=1}^{p}x_j=\sum_{r=1}^{p}(-1)^{p-r}\sum_{j_1<\dots<j_r}(x_{j_1}+\dots+x_{j_r})^p
\end{equation}
which holds true for any elements $x_1,\dots, x_p$ of a unit commutative ring (see for instance \cite{P}, Chapter $1$). Indeed, one can rewrite each product $\mathfrak{Re}\langle x_1,\cdot\rangle\dots\mathfrak{Re}\langle x_k,\cdot\rangle$ as
\begin{equation*}
\frac{1}{k!}\sum_{r=1}^k(-1)^{k-r}\sum_{j_1<\dots< j_r}\big(\mathfrak{Re}\langle x_{j_1}+\dots +x_{j_r},\cdot \rangle\big)^k
\end{equation*}
and the conclusion follows from the linearity of the Wick transform on the space $\mathcal{G}^k$ and Proposition \ref{orthogonal}.
\end{proof}

\begin{Rem}\label{interversion1}
By using the multilinear identity \eqref{multilineaire}, the linearity of the Wick transform on the space $\mathcal{G}^k$ and \eqref{interversion}, we can also prove that for every $k$-tuple $(j_1,\dots,j_k)$ of integers different from zero,
\begin{equation*}
:\mathfrak{Re}\langle \mathfrak{e}_{j_1},\cdot\rangle \dots \mathfrak{Re}\langle \mathfrak{e}_{j_k},\cdot\rangle:\circ\, T^n=:\mathfrak{Re}\langle T^{*n}\mathfrak{e}_{j_1},\cdot\rangle \dots \mathfrak{Re}\langle T^{*n}\mathfrak{e}_{j_k},\cdot\rangle:
\end{equation*}
which will be useful in the rest of the paper.
\end{Rem}
A consequence of Corollary \ref{nul} is that it suffices to know the values of the correlations between two functions which belong to the same space $:\mathcal{G}^k:$. Indeed, if we consider two functions $f$ and $g$ in $L^2_{\mathbb{R}}(\mathcal{H},\mathcal{B},m)$ with Wiener chaos decompositions $f=\sum_{k\ge 0} f_k$ and $g=\sum_{\ell\ge 0}g_\ell$ as in \eqref{expansion} (with $f_k:=\mathcal{P}_{:\mathcal{G}^k:}f$ and $g_\ell:=\mathcal{P}_{:\mathcal{G}^\ell:}g$), then the $n^{th}$ correlation between $f$ and $g$ becomes
\begin{equation*}
\mathcal{I}_n(f,g)=\sum_{k\ge 1}\mathcal{I}_n(f_k,g_k),
\end{equation*}
where the sum begins at $1$ since $\int_\mathcal{H} f\,dm=f_0$ and $\int_\mathcal{H} g\,dm=g_0$. In fact, it suffices by decomposition \eqref{developpement} to compute the correlations when the functions are Wick transforms of homogeneous polynomials of the same degree.

\begin{Prop}\label{correlationpoly}
For every $k$-tuples $(j_1,\dots,j_k)$ and $(\ell_1,\dots,\ell_k)$ of integers different from zero, we have
\begin{align*}
\int_{\mathcal{H}}:\mathfrak{Re}\langle \mathfrak{e}_{j_1},\cdot\rangle\dots\mathfrak{Re}\langle \mathfrak{e}_{j_k},\cdot\rangle: & (T^nx):\mathfrak{Re}\langle \mathfrak{e}_{\ell_1},\cdot\rangle\dots\mathfrak{Re}\langle \mathfrak{e}_{\ell_k},\cdot\rangle:(x)\,dm(x)\\
&=\sigma_{\ell_1}^2\dots\sigma_{\ell_k}^2\sum_{\tau\in \mathfrak{S}_k}\mathfrak{Re}\langle \mathfrak{e}_{j_1},T^n \mathfrak{e}_{\ell_{\tau(1)}}\rangle\dots\mathfrak{Re}\langle \mathfrak{e}_{j_k},T^n \mathfrak{e}_{\ell_{\tau(k)}}\rangle
\end{align*}
for any nonnegative integer $n$.
\end{Prop}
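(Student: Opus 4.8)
The plan is to combine the intertwining identity for the Wick transform recorded in Remark \ref{interversion1} with the Wick--correlation formula of Lemma \ref{correlation0}, and then to evaluate the resulting scalar products via Lemma \ref{rotations}. First I would apply Remark \ref{interversion1} to push $T^n$ inside the Wick transform, so that
\[
:\mathfrak{Re}\langle \mathfrak{e}_{j_1},\cdot\rangle\dots\mathfrak{Re}\langle \mathfrak{e}_{j_k},\cdot\rangle:(T^nx)=:\mathfrak{Re}\langle T^{*n}\mathfrak{e}_{j_1},\cdot\rangle\dots\mathfrak{Re}\langle T^{*n}\mathfrak{e}_{j_k},\cdot\rangle:(x),
\]
so that the integral to compute becomes $\langle :g_1\dots g_k:,:h_1\dots h_k:\rangle_{L^2(m)}$ with $g_i=\mathfrak{Re}\langle T^{*n}\mathfrak{e}_{j_i},\cdot\rangle$ and $h_i=\mathfrak{Re}\langle \mathfrak{e}_{\ell_i},\cdot\rangle$. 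One first observes that $\mathfrak{Re}\langle x,\cdot\rangle$ belongs to $\mathcal{G}$ for every $x\in\mathcal{H}$ (expand $x$ on the basis $(e_n)$, split the real and imaginary parts of the coefficients, and pass to the $L^2(m)$-limit using that $R$ is trace class), so the $g_i$ and $h_i$ are legitimate elements of $\mathcal{G}$.

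Next I would record the version of Lemma \ref{correlation0} valid for arbitrary elements of $\mathcal{G}$: for $g_1,\dots,g_k,h_1,\dots,h_k\in\mathcal{G}$,
\[
\langle :g_1\dots g_k:,:h_1\dots h_k:\rangle_{L^2(m)}=\sum_{\tau\in\mathfrak{S}_k}\langle g_1,h_{\tau(1)}\rangle_{L^2(m)}\dots\langle g_k,h_{\tau(k)}\rangle_{L^2(m)}.
\]
Its proof is verbatim that of Lemma \ref{correlation0}: apply the isometry \eqref{isometrie} between $(:\mathcal{G}^k:,\langle\cdot,\cdot\rangle_{L^2(m)})$ and $(\mathcal{G}_{\odot}^k,\langle\cdot,\cdot\rangle_{\odot})$, rewrite the right-hand scalar product using \eqref{scalarproduct} and \eqref{sym}, and collapse the double sum over $\mathfrak{S}_k\times\mathfrak{S}_k$ to a single one. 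Applied with the $g_i$ and $h_i$ above, this expresses our integral as $\sum_{\tau\in\mathfrak{S}_k}\prod_{i=1}^k\langle \mathfrak{Re}\langle T^{*n}\mathfrak{e}_{j_i},\cdot\rangle,\mathfrak{Re}\langle \mathfrak{e}_{\ell_{\tau(i)}},\cdot\rangle\rangle_{L^2(m)}$.

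It then remains to compute a single factor. By \eqref{rotation1} of Lemma \ref{rotations},
\[
\big\langle \mathfrak{Re}\langle T^{*n}\mathfrak{e}_{j},\cdot\rangle,\mathfrak{Re}\langle \mathfrak{e}_{\ell},\cdot\rangle\big\rangle_{L^2(m)}=\tfrac{1}{2}\,\mathfrak{Re}\langle RT^{*n}\mathfrak{e}_{j},\mathfrak{e}_{\ell}\rangle.
\]
Since $(e_n)$ is an orthonormal basis of eigenvectors of the self-adjoint operator $R$ with $Re_n=2\sigma_n^2e_n$, and $R$ is complex-linear, one has $R\mathfrak{e}_m=2\sigma_m^2\mathfrak{e}_m$ for all $m\in\mathbb{Z}^*$ (recall $\sigma_{-m}^2:=\sigma_m^2$). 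Using the self-adjointness of $R$ together with $\langle T^{*n}u,v\rangle=\langle u,T^nv\rangle$ gives $\langle RT^{*n}\mathfrak{e}_{j},\mathfrak{e}_{\ell}\rangle=2\sigma_\ell^2\langle \mathfrak{e}_{j},T^n\mathfrak{e}_{\ell}\rangle$, hence the factor equals $\sigma_\ell^2\,\mathfrak{Re}\langle \mathfrak{e}_{j},T^n\mathfrak{e}_{\ell}\rangle$. Substituting, each term of the sum over $\mathfrak{S}_k$ carries the factor $\prod_{i=1}^k\sigma_{\ell_{\tau(i)}}^2=\sigma_{\ell_1}^2\dots\sigma_{\ell_k}^2$, which pulls out of the sum, and we recover exactly the claimed identity.

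The main obstacle is one of care rather than depth: one must check that Remark \ref{interversion1} and the isometry \eqref{isometrie} (hence the above extension of Lemma \ref{correlation0}) genuinely apply to the non-basis vectors $T^{*n}\mathfrak{e}_{j_i}$, which is settled by the observation that $\mathfrak{Re}\langle x,\cdot\rangle\in\mathcal{G}$ for every $x\in\mathcal{H}$; everything after that is a direct computation using Lemma \ref{rotations} and the spectral decomposition of $R$.
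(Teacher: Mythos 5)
Your proposal is correct and follows essentially the same route as the paper: push $T^n$ inside the Wick transform via Remark \ref{interversion1}, rerun the isometry argument of Lemma \ref{correlation0} with $T^{*n}\mathfrak{e}_{j_i}$ in place of $\mathfrak{e}_{j_i}$, and evaluate each factor by \eqref{rotation1} together with $R\mathfrak{e}_\ell=2\sigma_\ell^2\mathfrak{e}_\ell$ and the self-adjointness of $R$ (this is exactly the paper's Fact \ref{transformation}). Your added remark that $\mathfrak{Re}\langle x,\cdot\rangle\in\mathcal{G}$ for arbitrary $x\in\mathcal{H}$ is a sensible piece of bookkeeping that the paper leaves implicit.
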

\begin{proof}
The beginning of the proof is the same as that of the proof of Lemma \ref{correlation0}.
We use the isometry between $\big(:\mathcal{G}^k:,\langle \cdot,\cdot\rangle_{L^2(m)}\big)$ and $\big(\mathcal{G}_{\odot}^k,\langle \cdot,\cdot \rangle_{\odot}\big)$, which is given by \eqref{isometrie}, and Remark \ref{interversion1}:
\begin{align*}
\mathcal{I}(j_1,\dots,j_k\,&;\, \ell_1,\dots,\ell_k)\\
&:=\int_{\mathcal{H}}:\mathfrak{Re}\langle \mathfrak{e}_{j_1},\cdot\rangle\dots\mathfrak{Re}\langle \mathfrak{e}_{j_k},\cdot\rangle:(T^nx):\mathfrak{Re}\langle \mathfrak{e}_{\ell_1},\cdot\rangle\dots\mathfrak{Re}\langle \mathfrak{e}_{\ell_k},\cdot\rangle:(x)\,dm(x)\\
&=\int_{\mathcal{H}}:\mathfrak{Re}\langle T^{*n}\mathfrak{e}_{j_1},\cdot\rangle\dots\mathfrak{Re}\langle T^{*n}\mathfrak{e}_{j_k},\cdot\rangle:(x):\mathfrak{Re}\langle \mathfrak{e}_{\ell_1},\cdot\rangle\dots\mathfrak{Re}\langle \mathfrak{e}_{\ell_k},\cdot\rangle:(x)\,dm(x).
\end{align*}
By replacing $\mathfrak{e}_{j_1},\dots,\mathfrak{e}_{j_k}$ by $T^{*n}\mathfrak{e}_{j_1},\dots,T^{*n}\mathfrak{e}_{j_k}$ in the proof of Lemma \ref{correlation0}, we find that
\begin{align*}
\mathcal{I}(j_1,\dots,j_k\,&;\, \ell_1,\dots,\ell_k)\\
&=\sum_{\tau\in \mathfrak{S}_k}\big\langle \mathfrak{Re}\langle T^{*n}\mathfrak{e}_{j_1},\cdot\rangle,\mathfrak{Re}\langle \mathfrak{e}_{\ell_{\tau(1)}},\cdot\rangle\big\rangle_{L^2(m)}\dots\big\langle \mathfrak{Re}\langle T^{*n}\mathfrak{e}_{j_k},\cdot\rangle,\mathfrak{Re}\langle \mathfrak{e}_{\ell_{\tau(k)}},\cdot\rangle\big\rangle_{L^2(m)}
\end{align*}
and the fact above yields the desired conclusion.
\begin{Fac}\label{transformation}
For every integers $i$ and $j$ different from zero, we have
\begin{equation*}
\big\langle\mathfrak{Re}\langle T^{*n}\mathfrak{e}_i,\cdot \rangle,\mathfrak{Re}\langle \mathfrak{e}_j,\cdot\rangle\big\rangle_{L^2(m)}=\sigma_j^2\, \mathfrak{Re}\langle \mathfrak{e}_i,T^n \mathfrak{e}_j\rangle.
\end{equation*}
\end{Fac}

\begin{proof}
We already know from \eqref{rotation1} that 
\begin{equation*}
\big\langle\mathfrak{Re}\langle T^{*n}\mathfrak{e}_i,\cdot \rangle,\mathfrak{Re}\langle \mathfrak{e}_j,\cdot\rangle\big\rangle_{L^2(m)}=\frac{1}{2}\mathfrak{Re}\langle RT^{*n}\mathfrak{e}_i, \mathfrak{e}_j\rangle.
\end{equation*}
The covariance operator is self-adjoint and $\mathfrak{e}_j$ is an eigenvector of $R$ corresponding to the eigenvalue $2\,\sigma_j^2$. Then the result follows readily.
\end{proof}
\end{proof}
By using Proposition \ref{correlationpoly}, we are now able to compute the correlations between two arbitrary functions in $L^2_{\mathbb{R}}(\mathcal{H},\mathcal{B},m)$. Since we have no uniform rate of decrease in $L^2_{\mathbb{R}}(\mathcal{H},\mathcal{B},m)$ according to Theorem \ref{non}, we need to make some assumptions of regularity on our functions. In the next section, we study some of these regularity assumptions and we show that the correlations decrease to zero with speed $n^{-\alpha}$ when we consider square-integrable real-valued functions which satisfy these conditions.

\section{Speed of mixing}
We consider here a bounded linear operator $T$ on $\mathcal{H}$ whose eigenvectors associated to unimodular eigenvalues are parametrized by a $\mu$-spanning $\mathbb{T}$-eigenvector field $E$ which is assumed to be $\alpha$-H\"olderian as in Assumption \ref{assumption}.
We already know from Section $3$ that there is no hope to find a uniform speed of mixing in the whole space $L^2(\mathcal{H},\mathcal{B},m)$. Then a natural problem is to find some classes of functions of $L^2_\mathbb{R}(\mathcal{H},\mathcal{B},m)$ for which the correlations decrease to zero with some speed of mixing. We will exhibit classes of functions for which the speed of mixing is exactly $n^{-\alpha}$.
\subsection{Speed of mixing for functions in a finite number of variables}
Our first result, which requires no regularity on the functions, gives a speed of mixing by considering functions of a finite number of variables.
\begin{Theo}
Let $N$ be a positive integer and 
\begin{equation*}
f=\phi\big(\mathfrak{Re}\langle \mathfrak{e}_{-N},\cdot \rangle,\dots,\mathfrak{Re}\langle \mathfrak{e}_N,\cdot\rangle\big),\ g=\psi\big(\mathfrak{Re}\langle \mathfrak{e}_{-N},\cdot \rangle,\dots,\mathfrak{Re}\langle \mathfrak{e}_N,\cdot\rangle\big)
\end{equation*}
be two real-valued functions which belong to $L^2_\mathbb{R}(\mathcal{H},\mathcal{B},m)$, where $\phi,\psi : \mathbb{R}^{2N}\longrightarrow \mathbb{R}$. Then there exists a positive constant $C_N$, which only depends on $N$ and $\sigma_1,\dots,\sigma_N$, such that
\begin{equation*}
\big\vert\mathcal{I}_n(f,g)\big\vert\le \frac{C_N}{n^{\alpha}}\,\vert\vert f \vert\vert_{L^2(m)}\,\vert\vert g \vert\vert_{L^2(m)}
\end{equation*}
for any positive integer $n$.
\end{Theo}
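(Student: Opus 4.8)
\emph{Proof proposal.} The plan is to decompose $f$ and $g$ along the Wiener chaos of Section~4 and to reduce the problem, level by level, to the control of a single operator on a finite-dimensional space. Write $f=\sum_{k\ge 0}f_k$ and $g=\sum_{k\ge 0}g_k$ as in \eqref{expansion}, with $f_k=\mathcal{P}_{:\mathcal{G}^k:}f$ and $g_k=\mathcal{P}_{:\mathcal{G}^k:}g$. Since $f$ and $g$ are measurable with respect to the $\sigma$-algebra generated by the $\mathfrak{Re}\langle \mathfrak{e}_j,\cdot\rangle$ with $|j|\le N$, and since the Wick transform does not depend on the ambient Gaussian space (as used in the proof of Proposition~\ref{hermite}), each $f_k$ and $g_k$ in fact lies in the \emph{finite-dimensional} space $:\mathcal{G}_N^k:$ spanned by the $:\mathfrak{Re}\langle \mathfrak{e}_{j_1},\cdot\rangle\dots\mathfrak{Re}\langle \mathfrak{e}_{j_k},\cdot\rangle:$ with $j_1\le\dots\le j_k$ and $|j_i|\le N$. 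By Corollary~\ref{nul} and since the degree-$0$ terms cancel, $\mathcal{I}_n(f,g)=\sum_{k\ge 1}\mathcal{I}_n(f_k,g_k)$, so it is enough to bound each $\mathcal{I}_n(f_k,g_k)$ by a constant which is summable against $\sum_{k\ge1}\|f_k\|_{L^2(m)}\|g_k\|_{L^2(m)}\le \|f\|_{L^2(m)}\|g\|_{L^2(m)}$ (Cauchy--Schwarz and orthogonality of the chaoses).

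Next I would introduce the operator $B_n$ on the $2N$-dimensional Hilbert space $\mathcal{G}_N:=\mathrm{span}[\mathfrak{Re}\langle \mathfrak{e}_j,\cdot\rangle\,;\,|j|\le N]$, endowed with the $L^2(m)$ inner product, defined by $B_nu:=P_N(u\circ T^n)$, where $P_N$ is the orthogonal projection of $L^2_{\mathbb{R}}(\mathcal{H},\mathcal{B},m)$ onto $\mathcal{G}_N$; by Corollary~\ref{dec} the family $(\mathfrak{Re}\langle \mathfrak{e}_j,\cdot\rangle)_{|j|\le N}$ is an orthogonal basis of $\mathcal{G}_N$ and Fact~\ref{transformation} gives the matrix of $B_n$ in it. Using the Fock identification \eqref{isometrie}--\eqref{scalarproduct} together with Remark~\ref{interversion1} (which says that $(:\mathfrak{Re}\langle \mathfrak{e}_{j_1},\cdot\rangle\dots\mathfrak{Re}\langle \mathfrak{e}_{j_k},\cdot\rangle:)\circ T^n=\,:\mathfrak{Re}\langle T^{*n}\mathfrak{e}_{j_1},\cdot\rangle\dots\mathfrak{Re}\langle T^{*n}\mathfrak{e}_{j_k},\cdot\rangle:$), one checks that, under this identification, composing with $T^n$ and projecting back onto $:\mathcal{G}_N^k:$ is exactly the operator induced by $B_n$ on $\mathcal{G}^k_{N,\odot}$, that is its $k$-th symmetric tensor power. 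Since $g_k\in:\mathcal{G}_N^k:$, this gives $\mathcal{I}_n(f_k,g_k)=\langle B_n^{\odot k}F_k,G_k\rangle_{\odot}$, where $F_k,G_k$ are the images of $f_k,g_k$ in $\mathcal{G}^k_{N,\odot}$, whence $|\mathcal{I}_n(f_k,g_k)|\le\|B_n\|^k\,\|f_k\|_{L^2(m)}\,\|g_k\|_{L^2(m)}$, the isometry preserving norms and the symmetric tensor power having norm $\le\|B_n\|^k$. I expect this identification to be the main obstacle: once it is in place, the summation over $k$ collapses for free (and it is also what replaces the direct but bookkeeping-heavy route through Proposition~\ref{correlationpoly}).

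It then remains to estimate $\|B_n\|$, and here Proposition~\ref{functional} does the work. Passing to the orthonormal basis $u_j:=\sigma_j^{-1}\mathfrak{Re}\langle \mathfrak{e}_j,\cdot\rangle$, Fact~\ref{transformation} gives $\langle B_nu_j,u_\ell\rangle_{L^2(m)}=\tfrac{\sigma_\ell}{\sigma_j}\,\mathfrak{Re}\langle \mathfrak{e}_j,T^n\mathfrak{e}_\ell\rangle$; since $\mathfrak{e}_\ell$ is an eigenvector of $R$ with eigenvalue $2\sigma_\ell^2$, Proposition~\ref{functional} yields $|\mathfrak{Re}\langle \mathfrak{e}_j,T^n\mathfrak{e}_\ell\rangle|\le \tfrac{C(E,\alpha)}{2\sigma_\ell^2\,n^\alpha}$, so each matrix entry is at most $\tfrac{C(E,\alpha)}{2\sigma_j\sigma_\ell\,n^\alpha}$ and, summing, $\|B_n\|\le\|B_n\|_{\mathrm{HS}}\le K_N\,n^{-\alpha}$ with $K_N:=C(E,\alpha)\sum_{\ell=1}^N\sigma_\ell^{-2}$, a constant depending only on $N$, $\sigma_1,\dots,\sigma_N$ (and on the fixed data $E,\alpha$). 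For every $n$ with $K_Nn^{-\alpha}\le 1$ we get
\begin{equation*}
|\mathcal{I}_n(f,g)|\le\sum_{k\ge1}\|B_n\|^k\|f_k\|_{L^2(m)}\|g_k\|_{L^2(m)}\le\|B_n\|\sum_{k\ge1}\|f_k\|_{L^2(m)}\|g_k\|_{L^2(m)}\le K_N\,n^{-\alpha}\|f\|_{L^2(m)}\|g\|_{L^2(m)} ,
\end{equation*}
while for the finitely many remaining $n$ the trivial bound $|\mathcal{I}_n(f,g)|\le 2\|f\|_{L^2(m)}\|g\|_{L^2(m)}$ (valid since $T$ preserves $m$) absorbs the finite loss into the constant; the conclusion then follows with $C_N=2K_N$.
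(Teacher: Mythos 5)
Your proof is correct, and while it rests on the same skeleton as the paper's argument (reduce to the Wiener chaos of the finite-dimensional Gaussian space $\mathcal{G}_N$, kill the cross terms by Corollary \ref{nul}, get a decay $\|B_n\|^k\sim n^{-k\alpha}$ at chaos level $k$, and sum), the mechanism you use at each fixed level $k$ is genuinely different. The paper expands $f_k$ and $g_k$ in the Wick monomial basis, applies the permanent formula of Proposition \ref{correlationpoly} together with the entrywise bound $\vert\mathfrak{Re}\langle \mathfrak{e}_i,T^n\mathfrak{e}_j\rangle\vert\lesssim n^{-\alpha}$, and then runs a Cauchy--Schwarz argument on the coefficients, which produces a factor $\ell!\,(2N)^{\ell}C(E,\alpha)^{\ell}C^{2\ell}n^{-\ell\alpha}$ at level $\ell$ and hence a geometric series that must be resummed. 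You instead observe that composing with $T^n$ and projecting back is, under the Fock identification \eqref{isometrie}--\eqref{scalarproduct}, exactly the symmetric tensor power $B_n^{\odot k}$ of the single $2N\times 2N$ operator $B_n=P_N(\,\cdot\circ T^n)$, and you invoke the second-quantization contraction bound $\Vert B_n^{\odot k}\Vert\le\Vert B_n\Vert^k$ (valid since the $\odot$ and $\otimes$ operator norms agree and the symmetric subspace is $B_n^{\otimes k}$-invariant). Your identification checks out: on basis monomials it reduces, via Remark \ref{interversion1} and Lemma \ref{correlation0}, to $\sum_{\tau}\prod_i\langle B_nu_{j_i},u_{\ell_{\tau(i)}}\rangle_{L^2(m)}$, and your Hilbert--Schmidt estimate $\Vert B_n\Vert\le K_Nn^{-\alpha}$ with $K_N=C(E,\alpha)\sum_{\ell=1}^N\sigma_\ell^{-2}$ follows correctly from Fact \ref{transformation} and Proposition \ref{functional}. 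What your route buys is a cleaner constant and no combinatorial bookkeeping; it also forces you to treat the finitely many $n$ with $K_Nn^{-\alpha}>1$ by the trivial bound $\vert\mathcal{I}_n(f,g)\vert\le 2\Vert f\Vert_{L^2(m)}\Vert g\Vert_{L^2(m)}$, a step the paper performs only implicitly when it asserts the existence of $C_N$ at the very end (its geometric series likewise converges only for $n$ large). One cosmetic caveat, which you already flag and which the paper shares: the resulting constant depends on $C(E,\alpha)$ as well as on $N$ and $\sigma_1,\dots,\sigma_N$.
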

\begin{proof}
In this proof, we deal with the Gaussian space generated by the random variables\newline 
$\mathfrak{Re}\langle \mathfrak{e}_\ell,\cdot\rangle$ where $\ell\in \{-N,\dots, N\}\setminus\{0\}$, that is
\begin{equation*}
\mathcal{G}_N:=\mathrm{span}\big[\mathfrak{Re}\langle \mathfrak{e}_{-N},\cdot\rangle,\dots, \mathfrak{Re}\langle \mathfrak{e}_N,\cdot\rangle\big].
\end{equation*}
We expand the functions $f$and $g$ as in \eqref{developpement} by using this Gaussian space and we find that
\begin{equation*}
f=\sum_{(i_{-N},\dots,i_N)\in (\mathbb{Z}_+)^{2N}}f_{i_{-N},\dots,i_N}:(\mathfrak{Re}\langle \mathfrak{e}_{-N},\cdot\rangle)^{i_{-N}}\dots (\mathfrak{Re}\langle \mathfrak{e}_N,\cdot\rangle)^{i_N}:
\end{equation*}
where
\begin{equation*}
\vert\vert f \vert\vert_{L^2(m)}^2=\sum_{(i_{-N},\dots,i_N)\in (\mathbb{Z}_+)^{2N}}\vert f_{i_{-N},\dots, i_N}\vert^2\, i_{-N}!\,\sigma_{-N}^{2i_{-N}}\dots i_N!\,\sigma_{N}^{2i_N}<+\infty,
\end{equation*}
and
\begin{equation*}
g=\sum_{(j_{-N},\dots,j_N)\in (\mathbb{Z}_+)^{2N}}g_{j_{-N},\dots, j_N}:(\mathfrak{Re}\langle \mathfrak{e}_{-N},\cdot\rangle)^{j_{-N}}\dots (\mathfrak{Re}\langle \mathfrak{e}_N,\cdot\rangle)^{j_N}:
\end{equation*}
where
\begin{equation*}
\vert\vert g \vert\vert_{L^2(m)}^2=\sum_{(j_1,\dots,j_N)\in (\mathbb{Z}_+)^{2N}}\vert g_{j_{-N},\dots,j_N}\vert^2\, j_{-N}!\,\sigma_{-N}^{2j_{-N}}\dots j_N!\,\sigma_{N}^{2j_N}<+\infty.
\end{equation*}
Then it follows from Corollary \ref{nul} that
\begin{align*}
\mathcal{I}_n(f,g)=&\sum_{\ell=1}^{+\infty}\sum_{\substack{(i_{-N},\dots,i_N)\in (\mathbb{Z}+)^{2N}\\ i_{-N}+\dots +i_N=\ell}}\sum_{\substack{(j_{-N},\dots,j_N)\in (\mathbb{Z}+)^{2N}\\ j_{-N}+\dots +j_N=\ell}}\,f_{i_{-N},\dots,i_N}\,g_{j_{-N},\dots, j_N} \\
&\times\mathcal{I}_n\big(:(\mathfrak{Re}\langle \mathfrak{e}_{-N},\cdot\rangle)^{i_{-N}}\dots (\mathfrak{Re}\langle \mathfrak{e}_N,\cdot\rangle)^{i_N}:,:(\mathfrak{Re}\langle \mathfrak{e}_{-N},\cdot\rangle)^{j_{-N}}\dots (\mathfrak{Re}\langle \mathfrak{e}_N,\cdot\rangle)^{j_N}:\big).
\end{align*}
We now expand the correlation 
$$
\mathcal{I}_n(:(\mathfrak{Re}\langle \mathfrak{e}_{-N},\cdot\rangle)^{i_{-N}}\dots (\mathfrak{Re}\langle \mathfrak{e}_N,\cdot\rangle)^{i_N}:,:(\mathfrak{Re}\langle \mathfrak{e}_{-N},\cdot\rangle)^{j_{-N}}\dots (\mathfrak{Re}\langle \mathfrak{e}_N,\cdot\rangle)^{j_N}:)
$$ 
by using the proof of Proposition \ref{correlationpoly} (before Fact \ref{transformation}). The triangle inequality and the proof of Proposition \ref{functional} show that the absolute value of this correlation is less than $\ell!\,\frac{C(E,\alpha)^\ell}{n^{\ell\alpha}}$. There exists a positive constant $C$  such that $C^{-\ell}\le \sigma_{-N}^{i_{-N}}\dots\sigma_N^{i_N}$ for any nonnegative integers $i_{-N},\dots,i_N$ such that $i_{-N}+\dots+i_N=\ell$ and then
\begin{align*}
\big\vert\mathcal{I}_n(f,g)\big\vert\le\sum_{\ell=1}^{+\infty}\frac{\ell!\,[C(E,\alpha)C^2]^{\ell}}{n^{\ell\alpha}}&\sum_{\substack{(i_{-N},\dots,i_N)\in (\mathbb{Z}+)^{2N}\\ i_{-N}+\dots +i_N=\ell}}\frac{\vert f_{i_{-N},\dots,i_N}\vert\,\sqrt{i_{-N}!\dots i_N!}\,\sigma_{-N}^{i_{-N}}\dots \sigma_N^{i_N}}{\sqrt{i_{-N}!\dots i_N!}}\\
&\times \sum_{\substack{(j_{-N},\dots,j_N)\in (\mathbb{Z}+)^{2N}\\ j_{-N}+\dots +j_N=\ell}}\frac{\vert g_{j_{-N},\dots,j_N}\vert\,\sqrt{j_{-N}!\dots j_N!}\,\sigma_{-N}^{j_{-N}}\dots \sigma_N^{j_N}}{\sqrt{j_{-N}!\dots j_N!}}\cdot
\end{align*}
By applying the Cauchy-Schwarz inequality, we find that
$$
\sum_{\substack{(i_{-N},\dots,i_N)\in (\mathbb{Z}+)^{2N}\\ i_{-N}+\dots +i_N=\ell}}\frac{\vert f_{i_{-N},\dots,i_N}\vert\,\sqrt{i_{-N}!\dots i_N!}\,\sigma_{-N}^{i_{-N}}\dots \sigma_N^{i_N}}{\sqrt{i_{-N}!\dots i_N!}}
$$
is less or equal than
$$
\bigg(\sum_{\substack{(i_{-N},\dots,i_N)\in (\mathbb{Z}+)^{2N}\\ i_{-N}+\dots +i_N=\ell}}\frac{1}{i_{-N}!\dots i_N!}\bigg)^{1/2}\vert\vert f \vert\vert_{L^2(m)}
=\sqrt{\frac{(2N)^{\ell}}{\ell!}}\,\vert\vert f \vert\vert_{L^2(m)}.
$$
If we do the same thing with the sum corresponding to $g$, it follows that
\begin{equation*}
\big\vert\mathcal{I}_n(f,g)\big\vert\le\bigg(\sum_{\ell=1}^{+\infty}\frac{[2C(E,\alpha)C^2 N]^{\ell}}{n^{\ell\alpha}}\bigg)\,\vert\vert f \vert\vert_{L^2(m)}\,\vert\vert g \vert\vert_{L^2(m)}. 
\end{equation*}
Finally, we can find a constant $C_N>0$ such that $\big\vert\mathcal{I}_n(f,g)\big\vert\le \frac{C_N}{n^{\alpha}}\vert\vert f \vert\vert_{L^2(m)}\,\vert\vert g \vert\vert_{L^2(m)}$ for any positive integer $n$, which concludes the proof.
\end{proof}

We now deal with more general functions on which we will have to impose some condition of smoothness in order to still have a speed of mixing. More precisely, in the estimation of $\mathcal{I}_n(f,g)$, we will consider a large class of infinitely differentiable functions for $f$ which satisfy some integrability condition and for $g$, we will deal with a more restrictive class of functions which contains the class of polynomial functions. It is the object of the next section to define these classes of functions.

\subsection{Regularity and Fourier coefficients}
In this section, we consider an infinitely differentiable function $f:\mathcal{H}\longrightarrow \mathbb{R}$ and we make the assumption that
\begin{equation}\label{intfinite}
\int_\mathcal{H} \big\vert\big\vert D^k f(x) \big\vert\big\vert \, dm(x)<+\infty\ \ \ \ \ \ \ \mathrm{for\ any\ nonnegative\ integer\ }k,
\end{equation}
where the $k$-linear form $D^kf$ is the $k^{th}$ derivative of the function $f$. Recall that the norm $\vert\vert \cdot\vert\vert$ of a bounded $k$-linear form $\phi$ is defined by
\begin{equation}\label{normemultilineaire}
\vert\vert \phi \vert\vert=\sup_{\vert\vert x_1 \vert\vert\le 1,\dots, \vert\vert x_k\vert\vert\le 1}\vert\phi(x_1,\dots, x_k)\vert.
\end{equation}
We write our function as in \eqref{expansion}, that is $f=\sum_{k\ge 0}f_k$, where $f_k:=\mathcal{P}_{:\mathcal{G}^k:}f$ is given by the expansion \eqref{developpement}. Our first task is to get informations on the coefficients $a_{j_1,\dots,j_k}^{(k)}$ of $f_k$ in this expansion.
\begin{Lem}\label{coefficient}
For any positive integer $r$ and any $r$-tuples $(j_1,\dots,j_r)$ and $(\ell_1,\dots,\ell_r)$ of integers such that $j_1<\dots<j_r$ and $\ell_1+\dots+\ell_r=k$, we have
\begin{equation*}
a_{\underbrace{j_1,\dots,j_1}_{\ell_1\ \mathrm{times}},\dots,\underbrace{j_r,\dots,j_r}_{\ell_r\ \mathrm{times}}}^{(k)}=\frac{1}{\ell_1!\dots\ell_r!}\int_{\mathcal{H}}D^kf(x)(\underbrace{\mathfrak{e}_{j_1},\dots,\mathfrak{e}_{j_1}}_{\ell_1\ \mathrm{times}},\dots,\underbrace{\mathfrak{e}_{j_r},\dots, \mathfrak{e}_{j_r}}_{\ell_r\ \mathrm{times}})\,dm(x).
\end{equation*}
\end{Lem}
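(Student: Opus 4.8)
The plan is to reduce the statement, via the orthogonality built into the Wiener chaos decomposition, to a single Gaussian integration-by-parts identity, and then to prove that identity one variable at a time. Since $:(\mathfrak{Re}\langle\mathfrak{e}_{j_1},\cdot\rangle)^{\ell_1}\cdots(\mathfrak{Re}\langle\mathfrak{e}_{j_r},\cdot\rangle)^{\ell_r}:$ belongs to $:\mathcal{G}^k:$ while $f-f_k=\sum_{p\neq k}f_p$ is orthogonal to $:\mathcal{G}^k:$, one may replace $f_k$ by $f$ in the numerator of \eqref{expressioncoefficient}; and Proposition \ref{variance}, applied to the non-decreasing tuple obtained by listing each $j_t$ with multiplicity $\ell_t$, evaluates the denominator as $\ell_1!\cdots\ell_r!\,\sigma_{j_1}^{2\ell_1}\cdots\sigma_{j_r}^{2\ell_r}$. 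Hence the lemma is equivalent to the identity
$$\int_{\mathcal{H}} f(x)\,:(\mathfrak{Re}\langle\mathfrak{e}_{j_1},\cdot\rangle)^{\ell_1}\cdots(\mathfrak{Re}\langle\mathfrak{e}_{j_r},\cdot\rangle)^{\ell_r}:(x)\,dm(x)=\sigma_{j_1}^{2\ell_1}\cdots\sigma_{j_r}^{2\ell_r}\int_{\mathcal{H}} D^kf(x)(\mathfrak{e}_{j_1},\dots,\mathfrak{e}_{j_1},\dots,\mathfrak{e}_{j_r},\dots,\mathfrak{e}_{j_r})\,dm(x),$$
each $\mathfrak{e}_{j_t}$ appearing $\ell_t$ times in the derivative.

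Next I would rewrite both sides. Because $j_1<\dots<j_r$, the random variables $\mathfrak{Re}\langle\mathfrak{e}_{j_t},\cdot\rangle$ are independent Gaussians with distributions $\gamma_{\sigma_{j_t}}$ (Proposition \ref{basis}, Corollary \ref{dec}), so Fact \ref{mutually} followed by Proposition \ref{hermite} yields
$$:(\mathfrak{Re}\langle\mathfrak{e}_{j_1},\cdot\rangle)^{\ell_1}\cdots(\mathfrak{Re}\langle\mathfrak{e}_{j_r},\cdot\rangle)^{\ell_r}:=\prod_{t=1}^{r}\sigma_{j_t}^{\ell_t}\,H_{\ell_t}\!\left(\frac{\mathfrak{Re}\langle\mathfrak{e}_{j_t},\cdot\rangle}{\sigma_{j_t}}\right).$$
On the other side, a short computation from the convention on $\langle\cdot,\cdot\rangle$ and the definition of the $\mathfrak{e}_\ell$ shows that $\mathfrak{Re}\langle\mathfrak{e}_\ell,\mathfrak{e}_{j}\rangle$ equals $1$ for $\ell=j$ and $0$ otherwise; thus $x\mapsto x+t\mathfrak{e}_{j_t}$ shifts only the coordinate $\mathfrak{Re}\langle\mathfrak{e}_{j_t},x\rangle$ (by $t$), so the directional derivative $\partial_{\mathfrak{e}_{j_t}}$ is exactly the partial derivative in that coordinate and $D^kf(x)(\mathfrak{e}_{j_1},\dots,\mathfrak{e}_{j_r})=\partial_{\mathfrak{e}_{j_1}}^{\ell_1}\cdots\partial_{\mathfrak{e}_{j_r}}^{\ell_r}f(x)$.

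The engine is the elementary one-dimensional formula: for $\sigma>0$, $\ell\ge1$ and a regular enough $g:\mathbb{R}\to\mathbb{R}$,
$$\int_{\mathbb{R}} g(s)\,\sigma^\ell H_\ell\!\left(\tfrac{s}{\sigma}\right)d\gamma_\sigma(s)=\sigma^{2\ell}\int_{\mathbb{R}} g^{(\ell)}(s)\,d\gamma_\sigma(s),$$
obtained from the substitution $s=\sigma u$, the relation $H_\ell(u)e^{-u^2/2}=(-1)^\ell(e^{-u^2/2})^{(\ell)}$, and $\ell$ successive integrations by parts. Since the coordinates $\mathfrak{Re}\langle\mathfrak{e}_\ell,\cdot\rangle$ ($\ell\in\mathbb{Z}^*$) are independent, $m$ is a product measure in these coordinates, and by Fubini's theorem one integrates the product above successively over $\mathfrak{Re}\langle\mathfrak{e}_{j_1},\cdot\rangle,\dots,\mathfrak{Re}\langle\mathfrak{e}_{j_r},\cdot\rangle$, each step replacing the factor $\sigma_{j_t}^{\ell_t}H_{\ell_t}(\cdot/\sigma_{j_t})$ by $\sigma_{j_t}^{2\ell_t}\,\partial_{\mathfrak{e}_{j_t}}^{\ell_t}$ acting on $f$. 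After the $r$ steps one is left with $\sigma_{j_1}^{2\ell_1}\cdots\sigma_{j_r}^{2\ell_r}\int_{\mathcal{H}}\partial_{\mathfrak{e}_{j_1}}^{\ell_1}\cdots\partial_{\mathfrak{e}_{j_r}}^{\ell_r}f\,dm$, which is the right-hand side of the reduced identity by the previous paragraph.

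The main obstacle is purely technical: one must check that hypothesis \eqref{intfinite}, together with the fact that $m$ has finite moments of all orders (so all polynomials in the Gaussian coordinates are $m$-integrable and these coordinates have Gaussian tails), makes every intermediate integral absolutely convergent and kills all boundary terms in the iterated integration by parts, so that the successive uses of Fubini's theorem and of the one-dimensional formula are legitimate. The remaining ingredients — the reduction via \eqref{expressioncoefficient} and Proposition \ref{variance}, the factorization through Fact \ref{mutually} and Proposition \ref{hermite}, and the one-dimensional Hermite identity — are routine.
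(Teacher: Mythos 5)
Your proposal is correct and follows essentially the same route as the paper's own proof: both reduce the coefficient to $\langle f, :(\mathfrak{Re}\langle \mathfrak{e}_{j_1},\cdot\rangle)^{\ell_1}\cdots(\mathfrak{Re}\langle \mathfrak{e}_{j_r},\cdot\rangle)^{\ell_r}:\rangle_{L^2(m)}$ divided by the variance from Proposition \ref{variance}, factor the Wick transform into a product of Hermite polynomials via Fact \ref{mutually} and Proposition \ref{hermite}, write $m$ as a product measure in the independent Gaussian coordinates, and convert each Hermite factor into $\ell_t$ directional derivatives of $f$ by iterated one-dimensional integration by parts, picking up the factor $\sigma_{j_t}^{2\ell_t}$ at each step. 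The only cosmetic difference is that you isolate the scalar one-dimensional Hermite identity as a standalone engine, whereas the paper carries out the same integration by parts inline.
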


\begin{proof}
Recall that the coefficient $a_{j_1,\dots,j_k}^{(k)}$ is given in \eqref{expressioncoefficient} by
\begin{equation*}
a_{\underbrace{j_1,\dots,j_1}_{\ell_1\ \text{times}},\dots,\underbrace{j_r,\dots,j_r}_{\ell_r\ \text{times}}}^{(k)}=\frac{\big\langle f, :(\mathfrak{Re}\langle \mathfrak{e}_{j_1},\cdot\rangle)^{\ell_1}\dots (\mathfrak{Re}\langle \mathfrak{e}_{j_r},\cdot\rangle)^{\ell_r}:\big\rangle_{L^2(m)}}{\text{var}_{m}\big[:(\mathfrak{Re}\langle \mathfrak{e}_{j_1},\cdot\rangle)^{\ell_1}\dots (\mathfrak{Re}\langle \mathfrak{e}_{j_r},\cdot\rangle)^{\ell_r}:\big]}
\end{equation*}
and we already know from Proposition \ref{variance} that
\begin{equation*}
\text{var}_{m}\big[:(\mathfrak{Re}\langle \mathfrak{e}_{j_1},\cdot\rangle)^{\ell_1}\dots (\mathfrak{Re}\langle \mathfrak{e}_{j_r},\cdot\rangle)^{\ell_r}:\big]=\ell_1!\dots\ell_r!\,\sigma_{j_1}^{2\ell_1}\dots\sigma_{j_r}^{2\ell_r}.
\end{equation*}
In this proof, we assume that all the integers $j_1,\dots, j_r$ are positive, that is to say $0<j_1<\dots<j_r$. The computations are the same when one of the integer is negative. We now compute $\langle f, :(\mathfrak{Re}\langle e_{j_1},\cdot\rangle)^{\ell_1}\dots (\mathfrak{Re}\langle e_{j_r},\cdot\rangle)^{\ell_r}:\rangle_{L^2(m)}$ by using integrations by parts with respect to the Gaussian measure $m$ on $\mathcal{H}$. To do this, we decompose the Hilbert space $\mathcal{H}$ as follows: 
\begin{equation*}
\mathcal{H}=\bigoplus_{t=1}^{r+1}\mathcal{H}_t
\end{equation*}
where $\mathcal{H}_1=\text{span}[e_j\, ;\,1\le j\le j_1]$, $\mathcal{H}_2=\text{span}[e_j\, ;\, j_1<j\le j_2]$,..., $\mathcal{H}_r=\text{span}[e_j\, ;\, j_{r-1}<j\le j_r]$ and $\mathcal{H}_{r+1}=\overline{\text{span}}^{\mathcal{H}}[e_j\, ;\, j>j_r]$. Hence, we can write our Gaussian measure $m$ as a finite product of Gaussian measures, that is
\begin{equation*}
m=\bigotimes_{t=1}^{r+1}m_t
\end{equation*}
where $m_1$ is the distribution of the Gaussian vector $(\langle e_1,\cdot \rangle,\dots,\langle e_{j_1},\cdot\rangle)$, $m_2$ is the distribution of\newline 
$(\langle e_{j_1+1},\cdot\rangle,\dots,\langle e_{j_2},\cdot\rangle)$,..., $m_{r+1}$ is the distribution of $(\langle e_{i},\cdot\rangle)_{i>j_r}$. Then we can write that the scalar product $\big\langle f, :(\mathfrak{Re}\langle e_{j_1},\cdot\rangle)^{\ell_1}\dots (\mathfrak{Re}\langle e_{j_{r}},\cdot\rangle)^{\ell_{r}}:\big\rangle_{L^2(m)}$ is equal to
\begin{align*}
&\int_{\mathcal{H}}f(x):(\mathfrak{Re}\langle e_{j_1},\cdot\rangle)^{\ell_1}\dots (\mathfrak{Re}\langle e_{j_{r}},\cdot\rangle)^{\ell_{r}}:(x)\,dm(x)\\
&=\int_{\mathcal{H}_1}\dots\int_{\mathcal{H}_{r+1}}f(x):(\mathfrak{Re}\langle e_{j_1},\cdot\rangle)^{\ell_1}\dots (\mathfrak{Re}\langle e_{j_{r}},\cdot\rangle)^{\ell_{r}}:(x)\,dm_1(x_1)\dots dm_{r+1}(x_{r+1})\\
&=\int_{\mathcal{H}_1}\dots\int_{\mathcal{H}_{r+1}}f(x):(\mathfrak{Re}\langle e_{j_1},\cdot\rangle)^{\ell_1}:(x_1)\dots :(\mathfrak{Re}\langle e_{j_{r}},\cdot\rangle)^{\ell_{r}}:(x_{r})\,dm_1(x_1)\dots dm_{r+1}(x_{r+1}),
\end{align*}
where the last equality comes from Fact \ref{mutually}. We now fix $(x_2,\dots,x_{r+1})$ in $\mathcal{H}_2\times\dots\times \mathcal{H}_{r+1}$ and we compute
\begin{equation*}
\alpha_{j_1,\ell_1}:=\int_{\mathcal{H}_1}f(x):(\mathfrak{Re}\langle e_{j_1},\cdot\rangle)^{\ell_1}:(x_1)\,dm_1(x_1)
=\int_{\mathcal{H}_1}f(x_1+\zeta):(\mathfrak{Re}\langle e_{j_1},\cdot\rangle)^{\ell_1}:(x_1)\,dm_1(x_1)
\end{equation*}
where $\zeta:=x_2+\dots+x_{r+1}$. But we know from \eqref{hermitevariance} that $:(\mathfrak{Re}\langle e_{j_1},\cdot\rangle)^{\ell_1}:=\sigma_{j_1}^{\ell_1}\,H_{\ell_1}\Big(\frac{\mathfrak{Re}\langle e_{j_1},\cdot\rangle}{\sigma_{j_1}}\Big)$. Then, since $m_1$ is the distribution of the Gaussian vector $(\langle e_1,\cdot\rangle,\dots,\langle e_{j_1},\cdot\rangle)$, we get
\begin{align*}
\alpha_{j_1,\ell_1}=\sigma_{j_1}^{\ell_1}\int_{\mathbb{R}^{2j_1}}f\bigg(\sum_{s=1}^{j_1-1}t_se_s+i\sum_{s=1}^{j_1}t_s^{'}e_s+&t_{j_1}e_{j_1}+\zeta\bigg)H_{\ell_1}\Big(\frac{t_{j_1}}{\sigma_{j_1}}\Big)\\
&\times d(\gamma_{\sigma_1}\otimes\gamma_{\sigma_1})(t_1,t_1^{'})\dots d(\gamma_{\sigma_{j_1}}\otimes\gamma_{\sigma_{j_1}})(t_{j_1},t_{j_1}^{'}).
\end{align*}
We now fix $(t_1,\dots,t_{j_1-1},t_1^{'},\dots,t_{j_1-1}^{'},t_{j_1}^{'})$ in $\mathbb{R}^{2j_1-1}$ and we put $\displaystyle \omega:=\sum_{s=1}^{j_1-1}t_se_s+i\sum_{s=1}^{j_1}t_s^{'}e_s+\zeta$. The only integral we really need to compute is
\begin{align*}
\mathcal{I}:=\int_{\mathbb{R}}f(te_{j_1}+\omega)H_{\ell_1}\Big(\frac{t}{\sigma_{j_1}}\Big)\,d\gamma_{j_1}(t)&=\int_{\mathbb{R}}f(te_{j_1}+\omega)H_{\ell_1}\Big(\frac{t}{\sigma_{j_1}}\Big)e^{-t^2/2\sigma_{j_1}^2}\,\frac{dt}{\sigma_{j_1}\sqrt{2\pi}}\\
&=\int_{\mathbb{R}}f(\sigma_{j_1}se_{j_1}+\omega)H_{\ell_1}(s)e^{-s^2/2}\frac{ds}{\sqrt{2\pi}}\cdot
\end{align*}
By considering the expression of the Hermite polynomial $H_{\ell_1}$ which is given in Proposition \ref{hermite} and integrating by parts $\ell_1$ times, we have
\begin{align*}
\mathcal{I}=(-1)^{\ell_1}\int_{\mathbb{R}}f(\sigma_{j_1}se_{j_1}+\omega)\frac{d^{\ell_1}}{ds^{\ell_1}}&e^{-s^2/2}\,\frac{ds}{\sqrt{2\pi}}\\
&=\sigma_{j_1}^{\ell_1}\int_{\mathbb{R}}D^{\ell_1}f(\sigma_{j_1}se_{j_1}+\omega)(\underbrace{e_{j_1},\dots,e_{j_1}}_{\ell_1\ \text{times}})e^{-s^2/2}\,\frac{ds}{\sqrt{2\pi}}\\
&=\sigma_{j_1}^{\ell_1}\int_{\mathbb{R}}D^{\ell_1}f(te_{j_1}+\omega)(\underbrace{e_{j_1},\dots,e_{j_1}}_{\ell_1\ \text{times}})e^{-t^2/2\sigma_{j_1}^2}\,\frac{dt}{\sigma_{j_1}\sqrt{2\pi}}\cdot
\end{align*} 
So we conclude that
\begin{align*}
\alpha_{j_1,\ell_1}=\sigma_{j_1}^{2\ell_1}\int_{\mathbb{R}^{2j_1}}D^{\ell_1}f\bigg(\sum_{s=1}^{j_1-1}t_se_s +i\sum_{s=1}^{j_1}&t_s^{'}e_s +t_{j_1}e_{j_1}+\zeta\bigg)(e_{j_1},\dots,e_{j_1})\\
&\times d(\gamma_{\sigma_1}\otimes\gamma_{\sigma_1})(t_1,t_1^{'})\dots d(\gamma_{\sigma_{j_1}}\otimes\gamma_{\sigma_{j_1}})(t_{j_1},t_{j_1}^{'}),
\end{align*}
and then the scalar product $\big\langle f, :(\mathfrak{Re}\langle e_{j_1},\cdot\rangle)^{\ell_1}\dots (\mathfrak{Re}\langle  e_{j_{r}},\cdot\rangle)^{\ell_{r}}:\big\rangle_{L^2(m)}$ is equal to
\begin{align*}
\sigma_{j_1}^{2\ell_1}\int_{\mathcal{H}_1}\dots &\int_{\mathcal{H}_{r+1}} D^{\ell_1}f(x)(e_{j_1},\dots, e_{j_1})\\
& \times :(\mathfrak{Re}\langle e_{j_2},\cdot\rangle)^{\ell_2}:(x_2)\dots:(\mathfrak{Re}\langle e_{j_{r}},\cdot\rangle)^{\ell_{r}}:(x_{r})\,dm_1(x_1)\dots dm_{r+1}(x_{r+1}).
\end{align*}
Secondly, we do the same thing for 
\begin{equation*}
\int_{\mathcal{H}_2}D^{\ell_1}f(x)(e_{j_1},\dots, e_{j_1}):(\mathfrak{Re}\langle e_{j_2},\cdot\rangle)^{\ell_2}:(x_2)\,dm_2(x_2)
\end{equation*}
and we find that this integral is equal to
\begin{equation*}
\sigma_{j_2}^{2\ell_2}\int_{\mathcal{H}_2}D^{\ell_1+\ell_2}f(x)(\underbrace{e_{j_1},\dots,e_{j_1}}_{\ell_1\ \mathrm{times}},\underbrace{e_{j_2},\dots,e_{j_2}}_{\ell_2\ \mathrm{times}})\,dm_2(x_2).
\end{equation*}
At the $i^{th}$ step ($1\le i < r$), we easily obtain that
\begin{equation}
\frac{\big\langle f, :(\mathfrak{Re}\langle e_{j_1},\cdot\rangle)^{\ell_1}\dots (\mathfrak{Re}\langle e_{j_r},\cdot\rangle)^{\ell_r}:\big\rangle_{L^2(m)}}{\sigma_{j_1}^{2\ell_1}\dots\sigma_{j_i}^{2\ell_i}}
\label{quotient}
\end{equation}
is equal to
\begin{align*}
\int_{\mathcal{H}_1}\dots \int_{\mathcal{H}_{r+1}}D^{\ell_1+\dots+\ell_i}&f(x)(\underbrace{e_{j_1},\dots,e_{j_1}}_{\ell_1\ \text{times}},\dots,\underbrace{e_{j_i},\dots,e_{j_i}}_{\ell_i\ \text{times}})\\
&\times\prod_{t=i+1}^{r}:(\mathfrak{Re}\langle e_{j_{t}},\cdot\rangle)^{\ell_{t}}:(x_{t})\,dm_1(x_1)\dots dm_{r+1}(x_{r+1}),
\end{align*}
and finally, since $\ell_1+\dots+\ell_r=k$, we find that \eqref{quotient} is equal to
\begin{align*}
\int_{\mathcal{H}_1}\dots \int_{\mathcal{H}_{r+1}}& D^kf(x)(\underbrace{e_{j_1},\dots,e_{j_1}}_{\ell_1\ \text{times}},\dots,\underbrace{e_{j_r},\dots,e_{j_r}}_{\ell_r\ \text{times}})\,dm_1(x_1)\dots dm_{r+1}(x_{r+1})\\
&=\int_{\mathcal{H}}D^kf(x)(\underbrace{e_{j_1},\dots,e_{j_1}}_{\ell_1\ \text{times}},\dots,\underbrace{e_{j_r},\dots,e_{j_r}}_{\ell_r\ \text{times}})\,dm(x),
\end{align*}
which proves the lemma.
\end{proof}

\begin{Rem}
In the sequel, we denote by $\int_\mathcal{H}D^kf(x)\,dm(x)$ the $k$-linear form defined by 
$$
\int_\mathcal{H}D^kf(x)\,dm(x)(x_1,\dots, x_k):=\int_\mathcal{H}D^kf(x)(x_1,\dots, x_k)\,dm(x)
$$
for every vectors $x_1,\dots, x_k$ of $\mathcal{H}$.
\end{Rem}

Further on in the proof, we will need to write a function $f_k$ in $:\mathcal{G}^k:$ in a way which is a bit different from \eqref{developpement}. We now expand $f_k$ as
\begin{equation} \label{developpement2}
f_k=\sum_{(i_1,\dots, i_k)\in (\mathbb{Z}^*)^k}\alpha_{i_1,\dots,i_k}^{(k)}:\mathfrak{Re}\langle \mathfrak{e}_{i_1},\cdot \rangle\dots \mathfrak{Re}\langle \mathfrak{e}_{i_k},\cdot \rangle:
\end{equation}
where this sum is taken over all the $k$-tuples $(i_1,\dots,i_k)$ of integers different from zero. The difference with the first decomposition \eqref{developpement} is that each Wick transform 
$$
:\mathfrak{Re}\langle \mathfrak{e}_{i_1},\cdot \rangle\dots \mathfrak{Re}\langle \mathfrak{e}_{i_k},\cdot \rangle:
$$ 
appears several times and so that the coefficient $\alpha_{i_1,\dots,i_k}^{(k)}$ comes from all the coefficients $a_{i_{\sigma(1)},\dots,i_{\sigma(k)}}^{(k)}$, where $\sigma$ is a permutation in $\mathfrak{S}_k$ such that $i_{\sigma(1)}\le\dots\le i_{\sigma(k)}$. More precisely, the computation of $\alpha_{i_1,\dots,i_k}^{(k)}$ is given by the next proposition.
\begin{Prop}\label{nouveauxcoefficients}
For any $k$-tuple $(i_1,\dots,i_k)$ of integers different from zero, we can find some integers $r$, $j_1,\dots,j_r$ and $\ell_1,\dots,\ell_r$ such that the set $\{i_1,\dots,i_k\}$ is equal to the set $\{\underbrace{j_1,\dots,j_1}_{\ell_1\ \mathrm{times}},\dots,\underbrace{j_r,\dots,j_r}_{\ell_r\ \mathrm{times}}\}$ where $j_1<\dots<j_r$ and $\ell_1+\dots+\ell_r=k$. Furthermore,
\begin{equation}\label{coefficient2}
\alpha_{i_1,\dots, i_k}^{(k)}=\frac{\ell_1!\dots \ell_r!}{k!}\,a_{\underbrace{j_1,\dots,j_1}_{\ell_1\ \mathrm{times}},\dots,\underbrace{j_r,\dots, j_r}_{\ell_r\ \mathrm{times}}}^{(k)}.
\end{equation}
\end{Prop}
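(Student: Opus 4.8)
The plan is to read off the coefficients of the symmetric expansion \eqref{developpement2} from the canonical one \eqref{developpement} by using the invariance of the Wick transform under permutation of its factors. The basic observation is that, for nonzero integers $i_1,\dots,i_k$, the product $\mathfrak{Re}\langle\mathfrak{e}_{i_1},\cdot\rangle\cdots\mathfrak{Re}\langle\mathfrak{e}_{i_k},\cdot\rangle$, seen as an element of $\mathcal{G}^k$, is unchanged when its factors are permuted, hence
\begin{equation*}
:\mathfrak{Re}\langle\mathfrak{e}_{i_{\sigma(1)}},\cdot\rangle\cdots\mathfrak{Re}\langle\mathfrak{e}_{i_{\sigma(k)}},\cdot\rangle:\ =\ :\mathfrak{Re}\langle\mathfrak{e}_{i_1},\cdot\rangle\cdots\mathfrak{Re}\langle\mathfrak{e}_{i_k},\cdot\rangle:
\end{equation*}
for every $\sigma\in\mathfrak{S}_k$. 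The first assertion of the proposition then follows at once by grouping the entries of $(i_1,\dots,i_k)$ according to their distinct values, which produces integers $j_1<\dots<j_r$ with multiplicities $\ell_1,\dots,\ell_r$ and $\ell_1+\dots+\ell_r=k$.

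Next, I would carry out a counting argument. Fixing $j_1<\dots<j_r$ and multiplicities $\ell_1,\dots,\ell_r$ with $\ell_1+\dots+\ell_r=k$, the number of tuples $(i_1,\dots,i_k)\in(\mathbb{Z}^*)^k$ that are rearrangements of the multiset with these distinct values and multiplicities is the multinomial coefficient $k!/(\ell_1!\cdots\ell_r!)$, and as the data $(j_t,\ell_t)$ vary these classes partition $(\mathbb{Z}^*)^k$, since each tuple of nonzero integers has a unique nondecreasing rearrangement. By the displayed identity, all Wick transforms attached to tuples in a given class coincide with the basis element $:\mathfrak{Re}\langle\mathfrak{e}_{j_1},\cdot\rangle\cdots\mathfrak{Re}\langle\mathfrak{e}_{j_k},\cdot\rangle:$ associated (as in \eqref{basis3}) with the nondecreasing representative of the class. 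Therefore, starting from \eqref{developpement} and distributing each of its terms evenly over the corresponding class of $k!/(\ell_1!\cdots\ell_r!)$ tuples, one obtains
\begin{align*}
f_k&=\sum_{j_1\le\dots\le j_k}a_{j_1,\dots,j_k}^{(k)}\,:\mathfrak{Re}\langle\mathfrak{e}_{j_1},\cdot\rangle\cdots\mathfrak{Re}\langle\mathfrak{e}_{j_k},\cdot\rangle:\\
&=\sum_{(i_1,\dots,i_k)\in(\mathbb{Z}^*)^k}\frac{\ell_1!\cdots\ell_r!}{k!}\,a_{\underbrace{j_1,\dots,j_1}_{\ell_1},\dots,\underbrace{j_r,\dots,j_r}_{\ell_r}}^{(k)}\,:\mathfrak{Re}\langle\mathfrak{e}_{i_1},\cdot\rangle\cdots\mathfrak{Re}\langle\mathfrak{e}_{i_k},\cdot\rangle:,
\end{align*}
where on the second line $r$ and the $j_t,\ell_t$ denote the multiplicity data of the tuple $(i_1,\dots,i_k)$; the reindexing of the series is legitimate because each coefficient is merely multiplied by a factor in $(0,1]$. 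This is precisely the expansion \eqref{developpement2} with the coefficients given by \eqref{coefficient2}.

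Finally, I would observe that this determines $\alpha_{i_1,\dots,i_k}^{(k)}$ without ambiguity: the expansion \eqref{developpement2} with coefficients that are symmetric in $(i_1,\dots,i_k)$ is unique, since regrouping the terms of any such expansion over the classes above yields an expansion of $f_k$ in the orthogonal basis of $:\mathcal{G}^k:$ provided by Proposition \ref{basis2}, whose coefficients are uniquely given by \eqref{expressioncoefficient}; matching these with the $a^{(k)}$ and solving for $\alpha_{i_1,\dots,i_k}^{(k)}$ reproduces \eqref{coefficient2}. I do not expect a genuine obstacle here: the only point requiring a little attention is the bookkeeping in the second paragraph — the multinomial count and the matching of each tuple with its nondecreasing representative — which is elementary.
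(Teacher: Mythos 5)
Your proposal is correct and follows essentially the same route as the paper: the key point in both is the multinomial count $k!/(\ell_1!\cdots\ell_r!)$ of tuples sharing a given multiset of indices (the paper's Fact \ref{faitcombinatoire}), combined with the permutation invariance of the Wick transform, so that the symmetric coefficient is the canonical one divided by that count. Your added remarks on the uniqueness of the symmetric expansion are a harmless elaboration of what the paper leaves implicit.
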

\begin{proof}
The only thing we really need to prove is \eqref{coefficient2}. It is based on the following combinatorial fact.
\begin{Fac}\label{faitcombinatoire}
The number of $k$-tuples $(i_1,\dots,i_k)$ in $(\mathbb{Z}^*)^{k}$ such that 
$$
\{i_1,\dots,i_{k}\}=\{\underbrace{j_1,\dots,j_1}_{\ell_1\ \mathrm{times}},\dots,\underbrace{j_r,\dots,j_r}_{\ell_r\ \mathrm{times}}\}
$$ 
is equal to $\displaystyle\frac{k!}{\ell_1!\dots \ell_r!}$.
\end{Fac}
\begin{proof}
We want to compute the number of $k$-tuples we can produce with $\ell_1$ integers $j_1$,..., $\ell_{r-1}$ integers $j_{r-1}$ and $\ell_r$ integers $j_r$, where $j_p\ne j_q$ when $p\ne q$. The number of different positions of the $\ell_1$ integers $j_1$ in a $k$-tuple is exactly $\binom{k}{\ell_1}$. Then, for the $\ell_2$ integers $j_2$, there are $k-\ell_1$ positions left, that is $\binom{k-\ell_1}{\ell_2}$ possibilities. At the end, for the $\ell_r$ integers $j_r$, we have only $\binom{k-\ell_1-\dots-\ell_{r-1}}{\ell_r}=1$ possibility. Hence, the number of $k$-tuples we have is
\begin{align*}
\binom{k}{\ell_1} \binom{k-\ell_1}{\ell_2}\dots &\binom{k-\ell_1-\dots- \ell_{r-1}}{\ell_r}\\
&=\frac{k!}{\ell_1!(k-\ell_1)!}\cdot\frac{(k-\ell_1)!}{\ell_2!(k-\ell_2)!}\cdots \frac{(k-\ell_1-\dots-\ell_{r-1})!}{\ell_r!(k-\ell_1-\dots-\ell_r)!}\\
&=\frac{k!}{\ell_1!\dots \ell_r!},
\end{align*}
since $\ell_1+\dots+\ell_r=k$.
\end{proof}
\noindent We can now conclude the proof of Proposition \ref{nouveauxcoefficients} since our new coefficient $\alpha_{i_1,\dots,i_k}^{(k)}$ is the coefficient $a_{\underbrace{j_1,\dots,j_1}_{\ell_1\ \mathrm{times}},\dots,\underbrace{j_r,\dots,j_r}_{\ell_r\ \mathrm{times}}}^{(k)}$ divided by $\displaystyle\frac{k!}{\ell_1!\dots \ell_r!}\cdot$
\end{proof}

The sequence of coefficients $\big(\alpha_{i_1,\dots,i_k}^{(k)}\big)_{(i_1,\dots,i_k)\in (\mathbb{Z}^*)^k}$ in the expansion \eqref{developpement2} satisfies a property of symmetry which is defined above.
\begin{Def}
A sequence of real numbers $(\alpha_{i_1,\dots,i_k})_{(i_1,\dots,i_k)\in (\mathbb{Z}^*)^k}$ is said to be \textit{symmetric} if for every permutation $\sigma$ in $\mathfrak{S}_k$, $\alpha_{i_{\sigma(1)},\dots,i_{\sigma(k)}}=\alpha_{i_1,\dots,i_k}$ for any $k$-tuple $(i_1,\dots, i_k)$ of integers different from zero.
\end{Def}
The reason we consider this new expansion \eqref{developpement2} is that the symmetry of the sequence of coefficients defined in \eqref{coefficient2} enables us to define a \textit{symmetric} $k$-linear form associated to $f_k$ by setting
\begin{eqnarray}\label{multilinearform}
\mathcal{B}_{f_k}:\ell_2(\mathbb{Z}^*,\mathbb{R})\times\dots\times\ell_2(\mathbb{Z}^*,\mathbb{R})&\longrightarrow& \ \ \ \ \ \ \ \ \ \ \mathbb{R}\\
\notag \big(\big(x_{i_1}^{(1)}\big)_{i_1\in \mathbb{Z}^*},\dots,\big(x_{i_k}^{(k)}\big)_{i_k\in \mathbb{Z}^*}\big)&\longmapsto& \sum_{(i_1,\dots, i_k)\in (\mathbb{Z}^*)^k}\alpha_{i_1,\dots, i_{k}}^{(k)}x_{i_1}^{(1)}\dots x_{i_k}^{(k)}.
\end{eqnarray}

\begin{Rem}
When the $k$-linear form $\mathcal{B}_{f_k}$ is well defined, that is to say when the series
$$
 \sum_{(i_1,\dots, i_k)\in (\mathbb{Z}^*)^k}\alpha_{i_1,\dots, i_{k}}^{(k)}x_{i_1}^{(1)}\dots x_{i_k}^{(k)}
$$
is convergent for every vectors $x^{(1)},\dots, x^{(k)}$ in $\ell_2(\mathbb{Z}^*,\mathbb{R})$, then it is easy to prove by using the uniform boundedness principle that this $k$-linear form is bounded.
\end{Rem}

We recall that the multilinear form $\mathcal{B}_{f_k}$ is continuous at zero if and only if it is bounded on $\mathcal{H}$, that is if there exists a positive constant $C$ such that for every $\big(x_{i_1}^{(1)}\big)_{i_1\in \mathbb{Z}^*},\dots,\big(x_{i_k}^{(k)}\big)_{i_k\in \mathbb{Z}^*}$ in $\ell_{2}(\mathbb{Z}^*,\mathbb{R})$, we have
\begin{equation*}\label{multilinearnorm}
\Big\vert\sum_{(i_1,\dots, i_k)\in (\mathbb{Z}^*)^k}\alpha_{i_1,\dots, i_k}^{(k)}x_{i_1}^{(1)}\dots x_{i_k}^{(k)} \Big\vert\le C\, \big\vert\big\vert x^{(1)}\big\vert\big\vert_{2}\,\dots \big\vert\big\vert x^{(k)}\big\vert\big\vert_{2},
\end{equation*}
where $\vert\vert x^{(i)}\vert\vert_2^2:=\sum_{p\in \mathbb{Z}^*}\vert x^{(i)}_p\vert^2$.
The important result of this section is the connection between the $k$-linear form $\mathcal{B}_{f_k}$ and our infinitely differentiable function $f$ which satisfies \eqref{intfinite}. We introduce the unitary operator $\vartheta$ which is defined by
\begin{align*}
\vartheta : \ell_2(\mathbb{Z}^*,\mathbb{R})&\longrightarrow \mathcal{H}\\
x& \longmapsto \sum_{j\in \mathbb{Z}^*}x_j\mathfrak{e}_j=\sum_{j=1}^{+\infty}(x_j+i x_{-j})e_j.
\end{align*}
The $k$-linear forms $\mathcal{B}_{f_k}$ may exist if $f$ is not infinitely differentiable. But in the case where our function $f$ is infinitely differentiable, we can give an integral representation for $\mathcal{B}_{f_k}$.

\begin{Theo}\label{bounded}
For any positive integer $k$, we have the following expression for the $k$-linear form $\mathcal{B}_{f_k}:$
\begin{equation*}
\mathcal{B}_{f_k}\big(x^{(1)},\dots, x^{(k)}\big) =\frac{1}{k!}\int_{\mathcal{H}}D^kf(x)\big(\vartheta\big(x^{(1)}\big),\dots, \vartheta\big(x^{(k)}\big)\big)\, dm(x)
\end{equation*}
for every $x^{(1)},\dots, x^{(k)}\in \ell_2(\mathbb{Z}^*,\mathbb{R})$. In particular, $\mathcal{B}_{f_k}$ is bounded and 
\begin{equation*}\label{norme}
\vert\vert \mathcal{B}_{f_k} \vert\vert = \frac{1}{k!}\bigg\vert\bigg\vert \int_{\mathcal{H}}D^kf(x)\, dm(x)\bigg\vert\bigg\vert
\end{equation*}
where the norm $\vert\vert \cdot \vert\vert$ has been defined in \eqref{normemultilineaire}.
\end{Theo}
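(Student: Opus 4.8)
\noindent The plan is to reduce the identity to the coefficient formulas already established and then to identify the series defining $\mathcal{B}_{f_k}$ with a manifestly bounded $k$-linear form by a truncation argument. First I would write the coefficients $\alpha^{(k)}_{i_1,\dots,i_k}$ as integrals of $D^kf$. Given $(i_1,\dots,i_k)\in(\mathbb{Z}^*)^k$, express its underlying multiset as $\{\,\underbrace{j_1,\dots,j_1}_{\ell_1},\dots,\underbrace{j_r,\dots,j_r}_{\ell_r}\,\}$ with $j_1<\dots<j_r$ and $\ell_1+\dots+\ell_r=k$. Lemma~\ref{coefficient} gives $a^{(k)}_{j_1,\dots,j_1,\dots,j_r,\dots,j_r}$ as $\frac{1}{\ell_1!\dots\ell_r!}$ times an integral of $D^kf(x)$ evaluated at $(\mathfrak{e}_{j_1},\dots,\mathfrak{e}_{j_1},\dots,\mathfrak{e}_{j_r},\dots,\mathfrak{e}_{j_r})$, and \eqref{coefficient2} multiplies this by $\frac{\ell_1!\dots\ell_r!}{k!}$; the factorials cancel. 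Since $f$ is $C^\infty$, each $D^kf(x)$ is a symmetric bounded $k$-linear form, so its value is unaffected by reordering the arguments, and therefore
\[
\alpha^{(k)}_{i_1,\dots,i_k}=\frac{1}{k!}\int_{\mathcal{H}}D^kf(x)(\mathfrak{e}_{i_1},\dots,\mathfrak{e}_{i_k})\,dm(x)\qquad\text{for every }(i_1,\dots,i_k)\in(\mathbb{Z}^*)^k.
\]

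Second, I would sum the series in \eqref{multilinearform} by truncation. Fix $x^{(1)},\dots,x^{(k)}\in\ell_2(\mathbb{Z}^*,\mathbb{R})$, let $P_N$ be the truncation to coordinates of absolute value at most $N$, and note that $\vartheta(P_Nx^{(j)})=\sum_{0<|i|\le N}x^{(j)}_i\mathfrak{e}_i\to\vartheta(x^{(j)})$ in $\mathcal{H}$. Substituting the formula for $\alpha^{(k)}_{i_1,\dots,i_k}$ into the partial sum over the cube $\{\max_j|i_j|\le N\}$, exchanging this \emph{finite} sum with the integral, and invoking the multilinearity of $D^kf(x)$ and the definition of $\vartheta$, one finds that this partial sum equals $\frac{1}{k!}\int_{\mathcal{H}}D^kf(x)\big(\vartheta(P_Nx^{(1)}),\dots,\vartheta(P_Nx^{(k)})\big)\,dm(x)$. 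Letting $N\to\infty$, the integrand tends to $D^kf(x)\big(\vartheta(x^{(1)}),\dots,\vartheta(x^{(k)})\big)$ for every $x$ by continuity of the $k$-linear form $D^kf(x)$, and it is bounded by $\|D^kf(x)\|\,\|x^{(1)}\|_2\cdots\|x^{(k)}\|_2$ because $\vartheta$ is an isometry, a function which is $m$-integrable by \eqref{intfinite}. Dominated convergence then shows that the partial sums converge to $\frac{1}{k!}\int_{\mathcal{H}}D^kf(x)\big(\vartheta(x^{(1)}),\dots,\vartheta(x^{(k)})\big)\,dm(x)$, which is both the asserted integral representation and the proof that $\mathcal{B}_{f_k}$ is well defined. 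I expect this interchange of the infinite sum with the integral — choosing the truncations and producing the dominating function — to be the only delicate point; everything else is bookkeeping with the formulas above.

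Finally, the norm statement follows formally. Writing $\Phi$ for the $k$-linear form $x\mapsto\int_{\mathcal{H}}D^kf(x)\,dm(x)$ introduced above, the representation reads $\mathcal{B}_{f_k}(x^{(1)},\dots,x^{(k)})=\frac{1}{k!}\,\Phi\big(\vartheta(x^{(1)}),\dots,\vartheta(x^{(k)})\big)$, and $\Phi$ is bounded since $\|\Phi\|\le\int_{\mathcal{H}}\|D^kf(x)\|\,dm(x)<\infty$. Because $\vartheta:\ell_2(\mathbb{Z}^*,\mathbb{R})\to\mathcal{H}$ is a surjective isometry, it maps the closed unit ball of $\ell_2(\mathbb{Z}^*,\mathbb{R})$ exactly onto that of $\mathcal{H}$; hence taking the supremum over $\|x^{(j)}\|_2\le 1$ gives $\|\mathcal{B}_{f_k}\|=\frac{1}{k!}\|\Phi\|$, which yields the boundedness of $\mathcal{B}_{f_k}$ together with the claimed norm identity.
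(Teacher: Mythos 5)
Your proof is correct and follows essentially the same route as the paper: the coefficient identity from Lemma \ref{coefficient} together with \eqref{coefficient2} and symmetry, then resummation of the series \eqref{multilinearform} into the integral of $D^kf$ evaluated at $\vartheta(x^{(1)}),\dots,\vartheta(x^{(k)})$, and the norm identity from the fact that $\vartheta$ is a surjective isometry. Your truncation-plus-dominated-convergence argument simply makes explicit the interchange of the infinite sum with the integral, which the paper's proof passes over in a single displayed equality.
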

\begin{proof}
It follows from Lemma \ref{coefficient} and \eqref{coefficient2} that for every $r$-tuples $(j_1,\dots,j_r)$ and $(\ell_1,\dots,\ell_r)$ of integers such that $j_1<\dots<j_r$ and $\ell_1+\dots+\ell_r=k$, we have
\begin{align*}
\alpha_{\underbrace{j_1,\dots,j_1}_{\ell_1\ \mathrm{times}},\dots,\underbrace{j_r,\dots,j_r}_{\ell_r\ \mathrm{times}}}^{(k)}&=\frac{\ell_1!\dots\ell_r!}{k!}\cdot\frac{1}{\ell_1!\dots\ell_r!}\int_{\mathcal{H}}D^kf(z)\,dm(z)(\underbrace{\mathfrak{e}_{j_1},\dots, \mathfrak{e}_{j_1}}_{\ell_1\ \mathrm{times}},\dots, \underbrace{\mathfrak{e}_{j_r},\dots, \mathfrak{e}_{j_r}}_{\ell_r\ \mathrm{times}})\\
&=\frac{1}{k!}\int_\mathcal{H} D^kf(z)\,dm(z)(\underbrace{\mathfrak{e}_{j_1},\dots, \mathfrak{e}_{j_1}}_{\ell_1\ \mathrm{times}},\dots, \underbrace{\mathfrak{e}_{j_r},\dots, \mathfrak{e}_{j_r}}_{\ell_r\ \mathrm{times}}).
\end{align*}
Since the sequence $\big(\alpha_{i_1,\dots,i_k}^{(k)}\big)_{(i_1,\dots,i_k)\in (\mathbb{Z}^*)^k}$ is symmetric, we can deduce that for any $(i_1,\dots,i_k)$ in $(\mathbb{Z}^*)^k$,
\begin{equation*}
\alpha_{i_1,\dots, i_k}^{(k)}=\frac{1}{k!}\int_\mathcal{H} D^kf(z)\,dm(z)(\mathfrak{e}_{i_1},\dots, \mathfrak{e}_{i_k}).
\end{equation*}
Then, for every $x^{(1)}=\big(x_{i_1}^{(1)}\big)_{i_1\in \mathbb{Z}^*},\dots, x^{(k)}=\big(x_{i_k}^{(k)}\big)_{i_k\in \mathbb{Z}^*}$ in $\ell_2(\mathbb{Z}^*,\mathbb{R})$, we have
\begin{eqnarray*}
\mathcal{B}_{f_k}(x^{(1)},\dots, x^{(k)})&=&\sum_{(i_1,\dots, i_k)\in (\mathbb{Z}^*)^k}\alpha_{i_1,\dots, i_k}^{(k)}\,x_{i_1}^{(1)}\dots x_{i_k}^{(k)}\\
&=&\frac{1}{k!}\sum_{(i_1,\dots, i_k)\in (\mathbb{Z}^*)^k}\int_{\mathcal{H}}D^kf(z)\,dm(z)(\mathfrak{e}_{i_1},\dots, \mathfrak{e}_{i_k})\,x_{i_1}^{(1)}\dots x_{i_k}^{(k)}\\
&=&\frac{1}{k!}\int_{\mathcal{H}}D^kf(z)\,dm(z)\big(\vartheta\big(x^{(1)}\big),\dots, \vartheta\big(x^{(k)}\big)\big)
\end{eqnarray*}
and the theorem is proved.
\end{proof}

With the conditions of regularity we presented in this subsection, we can prove a result about the speed of mixing for functions which satisfy the integral condition \eqref{intfinite}.

\subsection{Speed of mixing in the spaces $:\mathcal{G}^k:$}
Before stating the general result, some estimations are needed. The first of these, which is essentially based on Parseval's theorem, will give the rate of mixing term in the main theorem.

\begin{Lem}\label{norm1}
For every positive integer $n$, we have
\begin{equation}
0\le \mathcal{I}_n\big(\vert\vert \cdot \vert\vert^2,\vert\vert \cdot \vert\vert^2\big) \le \frac{C(E)^2\,\pi^{2\alpha}}{n^{2\alpha}}\,\vert\vert E \vert\vert_2^2,
\end{equation}
where by definition $\displaystyle \vert\vert E \vert\vert_2^2=\int_\mathbb{T}\vert\vert E(\lambda)\vert\vert^2\,d\mu(\lambda)$.
\end{Lem}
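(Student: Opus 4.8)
The plan is to express $\mathcal{I}_n(\|\cdot\|^2,\|\cdot\|^2)$ as a sum of squared Fourier coefficients of the measures $\mu_{e_j,e_k}$ of \eqref{representation}, and then to estimate each of them by the change-of-variable trick used in the proof of Proposition \ref{functional}, taking care to keep every estimate in $L^2$ so that the Hölder exponent ends up squared and the constant stays equal to $\|E\|_2^2$.

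First I would record that $\|\cdot\|^2\in L^2(\mathcal{H},\mathcal{B},m)$ (Gaussian measures have finite moments of every order). Let $(e_j)_{j\ge1}$ be the orthonormal basis of eigenvectors of $R$, so that $\|x\|^2=\sum_{j\ge1}|\langle e_j,x\rangle|^2$ and $Re_j=2\sigma_j^2e_j$. Applying Corollary \ref{nul} and Proposition \ref{correlationpoly} to the degree-two Wiener chaos $f_2=\sum_{k\in\mathbb{Z}^*}:(\mathfrak{Re}\langle\mathfrak{e}_k,\cdot\rangle)^2:$ of $\|\cdot\|^2$ — using $2\sigma_k^2\,\mathfrak{Re}\langle\mathfrak{e}_j,T^n\mathfrak{e}_k\rangle=\mathfrak{Re}\langle RT^{*n}\mathfrak{e}_j,\mathfrak{e}_k\rangle$ (because $R\mathfrak{e}_k=2\sigma_k^2\mathfrak{e}_k$ and $R$ is self-adjoint), the identity $\sum_{k\in\mathbb{Z}^*}(\mathfrak{Re}\langle w,\mathfrak{e}_k\rangle)^2=\|w\|^2$, and $\|RT^{*n}(ie_\ell)\|=\|RT^{*n}e_\ell\|$ — (equivalently, expanding $\int_\mathcal{H}\|T^nx\|^2\|x\|^2\,dm$ by the complex fourth-moment identity $\int_\mathcal{H}|\langle u,x\rangle|^2|\langle v,x\rangle|^2\,dm=\langle Ru,u\rangle\langle Rv,v\rangle+|\langle Ru,v\rangle|^2$ that follows from Lemma \ref{rotations}, together with the $T$-invariance of $m$, which gives $\langle RT^{*n}e_j,T^{*n}e_j\rangle=\int_\mathcal{H}|\langle e_j,x\rangle|^2\,dm=2\sigma_j^2$), I would obtain
\[
\mathcal{I}_n(\|\cdot\|^2,\|\cdot\|^2)=\sum_{j\ge1}\|RT^{*n}e_j\|^2\ \ (\ge0),
\]
which settles the lower bound.

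For the upper bound, expand $\|RT^{*n}e_j\|^2=\sum_{k\ge1}|\langle RT^{*n}e_j,e_k\rangle|^2$ by Parseval and recall from \eqref{representation} that $\langle RT^{*n}e_j,e_k\rangle=\widehat{\mu_{e_j,e_k}}(n)=\int_0^{2\pi}h_{jk}(e^{i\theta})e^{in\theta}\frac{d\theta}{2\pi}$ with $h_{jk}(\lambda)=\langle e_j,E(\lambda)\rangle\overline{\langle e_k,E(\lambda)\rangle}$. The substitution $\theta\mapsto\theta+\pi/n$ from the proof of Proposition \ref{functional} gives $\widehat{\mu_{e_j,e_k}}(n)=\tfrac12\int_0^{2\pi}\big(h_{jk}(e^{i\theta})-h_{jk}(e^{i(\theta+\pi/n)})\big)e^{in\theta}\frac{d\theta}{2\pi}$, and Cauchy--Schwarz in this $\theta$-integral yields $|\widehat{\mu_{e_j,e_k}}(n)|^2\le\tfrac14\int_0^{2\pi}|h_{jk}(e^{i\theta})-h_{jk}(e^{i(\theta+\pi/n)})|^2\frac{d\theta}{2\pi}$. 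Summing over $j,k\ge1$ and interchanging the nonnegative sum with the integral,
\[
\mathcal{I}_n(\|\cdot\|^2,\|\cdot\|^2)\le\frac14\int_0^{2\pi}\sum_{j,k\ge1}\big|h_{jk}(e^{i\theta})-h_{jk}(e^{i(\theta+\pi/n)})\big|^2\,\frac{d\theta}{2\pi}.
\]
With $\lambda=e^{i\theta}$, $\lambda'=e^{i(\theta+\pi/n)}$, the telescoping $h_{jk}(\lambda)-h_{jk}(\lambda')=\langle e_j,E(\lambda)-E(\lambda')\rangle\overline{\langle e_k,E(\lambda)\rangle}+\langle e_j,E(\lambda')\rangle\overline{\langle e_k,E(\lambda)-E(\lambda')\rangle}$, together with $(|p|+|q|)^2\le 2|p|^2+2|q|^2$ and Parseval $\sum_{j\ge1}|\langle e_j,w\rangle|^2=\|w\|^2$, bounds the inner sum by $2\|E(\lambda)-E(\lambda')\|^2(\|E(\lambda)\|^2+\|E(\lambda')\|^2)$. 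By Assumption \ref{assumption}, $\|E(\lambda)-E(\lambda')\|^2\le C(E)^2\pi^{2\alpha}/n^{2\alpha}$; substituting, and using $\int_0^{2\pi}\|E(e^{i\theta})\|^2\frac{d\theta}{2\pi}=\|E\|_2^2$ for $E(e^{i\theta})$ and for its $\pi/n$-shift (invariance of $d\theta$ under translation), the factors $\tfrac14$, $2$ and the two copies of $\|E\|_2^2$ combine to give exactly $\mathcal{I}_n(\|\cdot\|^2,\|\cdot\|^2)\le C(E)^2\pi^{2\alpha}n^{-2\alpha}\|E\|_2^2$.

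The delicate point is that one must apply Cauchy--Schwarz to each Fourier coefficient $\widehat{\mu_{e_j,e_k}}(n)$ individually in $L^2$ and only afterwards sum over $j,k$: a coarser $L^1$-type estimate of $\langle RT^{*n}e_j,e_k\rangle$, or moving the $j,k$-summation inside before exploiting the oscillation, would lose a power and produce only $n^{-\alpha}$ with the wrong constant. The Parseval/telescoping estimate of the integrand is precisely what doubles the Hölder exponent while keeping the constant equal to $\|E\|_2^2$.
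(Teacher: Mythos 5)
Your argument is correct and follows essentially the same route as the paper: both reduce to the identity $\mathcal{I}_n(\vert\vert\cdot\vert\vert^2,\vert\vert\cdot\vert\vert^2)=\sum_{k,\ell}\vert\langle RT^{*n}e_k,e_\ell\rangle\vert^2$ and then estimate these Fourier coefficients via the $\theta\mapsto\theta+\pi/n$ shift, the telescoping of the product, Cauchy--Schwarz and the H\"older condition. The only difference is bookkeeping --- the paper first groups the $k$-sum into vector-valued Fourier coefficients by Plancherel in $\mathcal{H}$, whereas you estimate each scalar coefficient and sum over $j,k$ at the end --- and both yield the same constant.
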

\begin{proof}
Recall that the correlation $\mathcal{I}_n(\vert\vert \cdot \vert\vert^2,\vert\vert \cdot \vert\vert^2)$ is defined by
\begin{equation*}
\mathcal{I}_n\big(\vert\vert \cdot \vert\vert^2,\vert\vert \cdot \vert\vert^2\big)=\int_\mathcal{H} \vert\vert T^nx\vert\vert^2\,\vert\vert x \vert\vert^2\,dm(x)-\bigg(\int_\mathcal{H} \vert\vert x \vert\vert^2\,dm(x)\bigg)^2
\end{equation*}
and that we can rewrite it in a more tractable way as
\begin{equation*}
\mathcal{I}_n\big(\vert\vert \cdot \vert\vert^2,\vert\vert \cdot \vert\vert^2\big)=\sum_{(k,\ell)\in \mathbb{N}^2}\int_{\mathcal{H}}\vert \langle e_k, T^nx \rangle \vert^2\,\vert \langle e_\ell, x \rangle \vert^2\, dm(x)-\bigg(\int_\mathcal{H} \vert\vert x \vert\vert^2\,dm(x)\bigg)^2.
\end{equation*}
We now compute each integral
\begin{equation*}
\int_{\mathcal{H}}\vert \langle e_k, T^nx \rangle \vert^2\,\vert \langle e_\ell, x \rangle \vert^2\, dm(x)
\end{equation*}
by splitting the two factors of the integrand term into real and imaginary parts. Since for every vector $x$ of $\mathcal{H}$, we have
\begin{equation}\label{wicktransform}
(\mathfrak{Re}\langle x, \cdot \rangle)^2=:(\mathfrak{Re}\langle x, \cdot \rangle)^2:+\,\sigma_x^2,
\end{equation}
we easily find, by using the isometry \eqref{isometrie} between $\big(:\mathcal{G}^2:,\langle \cdot,\cdot \rangle_{L^2(m)}\big)$ and $\big(\mathcal{G}_{\odot}^{2},\langle \cdot, \cdot \rangle_{\odot}\big)$ and \eqref{rotation1}, that
\begin{align}\label{integrale1}
\int_{\mathcal{H}} (\mathfrak{Re}\langle e_k, &T^nx \rangle)^2\,(\mathfrak{Re}\langle e_\ell, x \rangle)^2\, dm(x)= \frac{2!}{2^2}(\mathfrak{Re}\langle RT^{*n}e_k,e_\ell \rangle)^2+\sigma_k^2\sigma_\ell^2.
\end{align}
Furthermore, since $\mathfrak{Im}\langle e_\ell, \cdot \rangle=\mathfrak{Re}\langle i e_{\ell}, \cdot \rangle$, we deduce from \eqref{integrale1} that
\begin{align}\label{integrale2}
\notag \int_{\mathcal{H}}(\mathfrak{Re}\langle e_k, T^nx \rangle)^2\,(\mathfrak{Im}\langle e_\ell, x \rangle)^2\, dm(x)&=\frac{1}{2}(\mathfrak{Re}\langle RT^{*n}e_k, ie_\ell \rangle)^2+\sigma_k^2\sigma_\ell^2\\
&=\frac{1}{2}(\mathfrak{Im}\langle RT^{*n}e_k, e_\ell \rangle)^2+\sigma_k^2\sigma_\ell^2.
\end{align}
By using the same method, we get
\begin{equation}\label{integrale3}
\int_{\mathcal{H}}(\mathfrak{Im}\langle e_k, T^nx \rangle)^2\,(\mathfrak{Re}\langle e_\ell, x \rangle)^2\, dm(x)=\frac{1}{2}(\mathfrak{Im}\langle RT^{*n}e_k, e_\ell \rangle)^2+\sigma_k^2\sigma_\ell^2
\end{equation}
and
\begin{equation}\label{integrale4}
\int_{\mathcal{H}}(\mathfrak{Im}\langle e_k, T^nx \rangle)^2\,(\mathfrak{Im}\langle e_\ell, x \rangle)^2\, dm(x)=\frac{1}{2}(\mathfrak{Re}\langle RT^{*n}e_k, e_\ell \rangle)^2+\sigma_k^2\sigma_\ell^2.
\end{equation}
We can deduce from \eqref{integrale1}, \eqref{integrale2}, \eqref{integrale3} and \eqref{integrale4} that
\begin{equation*}
\int_{\mathcal{H}}\vert \langle e_k, T^nx \rangle \vert^2\,\vert \langle e_\ell, x \rangle \vert^2\, dm(x)=\vert \langle RT^{*n} e_k, e_\ell \rangle \vert^2+4\,\sigma_k^2\sigma_\ell^2.
\end{equation*}
Finally, since $\int_{\mathcal{H}}\vert\vert \cdot \vert\vert^2\, dm=2\sum_{j\ge 1}\sigma_j^2$, we get
\begin{equation}\label{ecriture}
\mathcal{I}_n\big(\vert\vert \cdot \vert\vert^2,\vert\vert \cdot \vert\vert^2\big)=\sum_{(k,\ell) \in \mathbb{N}^2}\vert \langle RT^{*n}e_k,e_\ell \rangle \vert^2.
\end{equation}
We now need the integral representation \eqref{representation} of $\langle RT^{*n}e_k,e_\ell \rangle$: for any positive integers $k$ and $\ell$, we have
\begin{align*}
\langle RT^{*n}e_k,e_\ell \rangle =\int_{\mathbb{T}}\lambda^{n} \langle e_k,E(\lambda) \rangle\overline{\langle e_\ell , E(\lambda) \rangle} \, d\mu(\lambda)
=\Big\langle e_k,\int_{\mathbb{T}}\lambda^{n}\overline{\langle e_\ell , E(\lambda)\rangle}  E(\lambda)\,d\mu(\lambda)  \Big\rangle,
\end{align*}
and by the Plancherel theorem, 
\begin{equation*}
\mathcal{I}_n\big(\vert\vert \cdot \vert\vert^2,\vert\vert \cdot \vert\vert^2\big)=\sum_{\ell=1}^{+\infty}\bigg\vert\bigg\vert \int_{\mathbb{T}}\lambda^{n}\overline{\langle e_\ell , E(\lambda) \rangle} E(\lambda)\,d\mu(\lambda)\bigg\vert\bigg\vert^2.
\end{equation*}
We need to estimate the Fourier coefficient $c_n=\int_{\mathbb{T}}\lambda^{n}\overline{\langle e_\ell , E(\lambda) \rangle} E(\lambda)\,d\mu(\lambda)$ which appears in this equation and the way to do this is the same as in the proof of Proposition \ref{functional}. For any $\theta$, we put $\theta_n:=\theta+\frac{\pi}{n}$ and we have
\begin{align*}
\vert\vert c_n \vert\vert^2 =\frac{1}{4}\bigg\vert\bigg\vert\int_{0}^{2\pi}e^{in\theta}\Big[\overline{\langle e_\ell, E(e^{i\theta})-E(e^{i\theta_n}) \rangle} E(e^{i\theta})+\overline{\langle e_\ell, E(e^{i\theta_n})\rangle}\big(E(e^{i\theta})-E(e^{i\theta_n})\big)\Big]\frac{d\theta}{2\pi}\bigg\vert\bigg\vert^2
\end{align*}
Then $\vert\vert c_n\vert\vert^2$ is less or equal than
\begin{align*}
\frac{1}{2}&\bigg(\bigg\vert\bigg\vert \int_{0}^{2\pi}\overline{\langle e_\ell, E(e^{i\theta})-E(e^{i\theta_n}) \rangle} E(e^{i\theta})\,\frac{d\theta}{2\pi} \bigg\vert\bigg\vert^2+\bigg\vert\bigg\vert \int_{0}^{2\pi}\overline{\langle e_\ell, E(e^{i\theta_n})\rangle}\big(E(e^{i\theta})-E(e^{i\theta_n})\big)\frac{d\theta}{2\pi} \bigg\vert\bigg\vert^2\bigg)\\
&\le \frac{1}{2}\bigg(\int_{0}^{2\pi}\big\vert\langle e_\ell, E(e^{i\theta})-E(e^{i\theta_n}) \rangle\big\vert^2\,\frac{d\theta}{2\pi}\,\vert\vert E \vert\vert_2^2\\
&\qquad \qquad \qquad \qquad\qquad\qquad+\int_{0}^{2\pi}\big\vert \langle e_\ell, E(e^{i\theta_n})\big\rangle \big\vert^2\,\big\vert\big\vert E(e^{i\theta})-E(e^{i\theta_n})\big\vert\big\vert^2\,\frac{d\theta}{2\pi}\bigg)
\end{align*}
where the last inequality is a consequence of the Cauchy-Schwarz inequality. Since the $\mathbb{T}$-eigenvector field $E$ is $\alpha$-H\"olderian with H\"older constant $C(E)$, we conclude that the correlation $\mathcal{I}_n\big(\vert\vert \cdot \vert\vert^2,\vert\vert \cdot \vert\vert^2\big)$ is less or equal that
\begin{align*}
\frac{1}{2}\int_{0}^{2\pi}\big\vert\big\vert E(e^{i\theta})-E(e^{i\theta_n}) \big\vert\big\vert^2\,\frac{d\theta}{2\pi}\,\vert\vert E \vert\vert_2^2+\frac{C(E)^2\,\pi^{2\alpha}}{2n^{2\alpha}}\,\vert\vert E \vert\vert_2^2
\le \frac{C(E)^2\,\pi^{2\alpha}}{n^{2\alpha}}\,\vert\vert E \vert\vert_2^2.
\end{align*}
\end{proof}
\noindent This lemma shows that the sequence of correlations decreases to zero with speed $n^{-2\alpha}$ if we consider the square norm function. Moreover, in the same way as in Fact \ref{transformation}, we have $\langle RT^{*n}e_k,e_\ell\rangle=2\,\sigma_\ell^2\,\langle e_k,T^ne_\ell\rangle$ for any positive integers $k,\ell$. Then we deduce from \eqref{ecriture} and the conclusion of Lemma \ref{norm1} that the sum $\sum_{k\ge 1}\sigma_k^4\,\vert\vert T^ne_k \vert\vert^2$ tends to zero as $n$ goes to infinity. More precisely, we have the following corollary.
\begin{Cor}\label{ecriture2}
For any positive integer $n$, we have
\begin{equation*}
\sum_{k=1}^{+\infty}\sigma_k^4\,\vert\vert T^ne_k\vert\vert^2\le \frac{C(E)^2\,\pi^{2\alpha}}{4 n^{2\alpha}}\,\vert\vert E \vert\vert_2^2.
\end{equation*}
\end{Cor}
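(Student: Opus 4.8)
The plan is to recognize that the corollary is just a re-reading of Lemma \ref{norm1} through Parseval's identity. The ingredients are already on the table in the proof just given: the identity
$$
\mathcal{I}_n\big(\vert\vert \cdot \vert\vert^2,\vert\vert \cdot \vert\vert^2\big)=\sum_{(k,\ell) \in \mathbb{N}^2}\vert \langle RT^{*n}e_k,e_\ell \rangle \vert^2
$$
established as \eqref{ecriture}, together with the remark made right after the proof of Lemma \ref{norm1} that, exactly as in Fact \ref{transformation} (using that $R$ is self-adjoint and $e_\ell$ is an eigenvector of $R$ with eigenvalue $2\sigma_\ell^2$),
$$
\langle RT^{*n}e_k,e_\ell\rangle=2\,\sigma_\ell^2\,\langle e_k,T^ne_\ell\rangle
$$
for all positive integers $k,\ell$.

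First I would substitute this last identity into \eqref{ecriture}, factoring out $4\sigma_\ell^4$ and grouping the sum over $k$ inside:
$$
\mathcal{I}_n\big(\vert\vert \cdot \vert\vert^2,\vert\vert \cdot \vert\vert^2\big)=4\sum_{\ell\ge 1}\sigma_\ell^4\sum_{k\ge 1}\vert\langle e_k,T^ne_\ell\rangle\vert^2 .
$$
Since $(e_k)_{k\ge 1}$ is an orthonormal basis of $\mathcal{H}$, Parseval's identity gives $\sum_{k\ge 1}\vert\langle e_k,T^ne_\ell\rangle\vert^2=\vert\vert T^ne_\ell\vert\vert^2$, hence
$$
\sum_{k\ge 1}\sigma_k^4\,\vert\vert T^ne_k\vert\vert^2=\frac{1}{4}\,\mathcal{I}_n\big(\vert\vert \cdot \vert\vert^2,\vert\vert \cdot \vert\vert^2\big).
$$
Then I would simply invoke the upper estimate of Lemma \ref{norm1}, namely $\mathcal{I}_n\big(\vert\vert \cdot \vert\vert^2,\vert\vert \cdot \vert\vert^2\big)\le \frac{C(E)^2\,\pi^{2\alpha}}{n^{2\alpha}}\,\vert\vert E \vert\vert_2^2$, to obtain the claimed bound $\frac{C(E)^2\,\pi^{2\alpha}}{4n^{2\alpha}}\,\vert\vert E \vert\vert_2^2$.

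There is no real obstacle here: the only formal point to check is the rearrangement of the double sum over $(k,\ell)$ into an iterated sum, which is legitimate because all the terms are nonnegative (Tonelli), and the finiteness of the whole expression is guaranteed a priori by Lemma \ref{norm1}. So the corollary follows immediately, with the factor $1/4$ coming exactly from the square of the eigenvalue $2\sigma_\ell^2$ of $R$.
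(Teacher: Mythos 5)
Your proof is correct and follows exactly the argument the paper intends: substituting $\langle RT^{*n}e_k,e_\ell\rangle=2\sigma_\ell^2\langle e_k,T^ne_\ell\rangle$ into the identity \eqref{ecriture}, applying Parseval over $k$, and invoking the bound of Lemma \ref{norm1}, with the factor $1/4$ arising from the eigenvalue $2\sigma_\ell^2$ of $R$. Nothing is missing.
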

\noindent The next lemma says that if we replace $\sigma_k^4$ by $\sigma_k^2$ in the sum which appears in Corollary \ref{ecriture2}, then this sum remains uniformly bounded in $n$.

\begin{Lem}\label{seriemajoree}
For any positive integer $n$, the series $\sum_{k\ge 1}\sigma_k^2\, \vert\vert T^n e_k \vert\vert^2$ is convergent. More precisely, for any $n\ge 1$,
\begin{equation}\label{finitesum1}
\sum_{k=1}^{+\infty}\sigma_k^2\, \vert\vert T^n e_k \vert\vert^2\le \frac{\vert\vert E \vert\vert_2^2}{2}\cdot
\end{equation}
\end{Lem}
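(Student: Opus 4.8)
The plan is to adapt the computation in the proof of Lemma~\ref{norm1}, where the quantity $\mathcal{I}_n(\vert\vert\cdot\vert\vert^2,\vert\vert\cdot\vert\vert^2)=\sum_{k,\ell}\vert\langle RT^{*n}e_k,e_\ell\rangle\vert^2$ (equation~\eqref{ecriture}) was estimated via Parseval and the integral representation~\eqref{representation}. Here one factor $\sigma^2$ is missing (we have $\sigma_k^2$ instead of $\sigma_k^4$), and the point is that Bessel's inequality---rather than Plancherel for a complete orthonormal system---is exactly strong enough to absorb this loss and still deliver a bound independent of $n$.

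First I would expand, using Parseval's identity in the orthonormal basis $(e_i)_{i\ge 1}$ of $\mathcal{H}$, $\vert\vert T^n e_j\vert\vert^2=\sum_{i\ge 1}\vert\langle e_i,T^n e_j\rangle\vert^2$; combined with the identity $\langle RT^{*n}e_i,e_j\rangle=2\sigma_j^2\,\langle e_i,T^n e_j\rangle$ recalled above, this turns $\sigma_j^2\vert\langle e_i,T^n e_j\rangle\vert^2$ into $\tfrac1{4\sigma_j^2}\vert\langle RT^{*n}e_i,e_j\rangle\vert^2$. Summing over $i$ and $j$ (legitimate in any order since all terms are nonnegative) reduces the lemma to the inequality $\sum_{i\ge 1}\sum_{j\ge 1}\tfrac1{4\sigma_j^2}\vert\langle RT^{*n}e_i,e_j\rangle\vert^2\le\vert\vert E\vert\vert_2^2/2$; the indices with $\sigma_j=0$ contribute nothing on either side and can be discarded.

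The key step is to carry out the inner sum over $j$ with $i$ fixed. By~\eqref{representation}, $\langle RT^{*n}e_i,e_j\rangle=\int_{\mathbb{T}}\lambda^n\langle e_i,E(\lambda)\rangle\overline{\langle e_j,E(\lambda)\rangle}\,d\mu(\lambda)$, which is, up to a conjugation, the $L^2(\mathbb{T},\mu)$-scalar product of the fixed function $h_i:\lambda\mapsto\lambda^n\langle e_i,E(\lambda)\rangle$ against $g_j:\lambda\mapsto\langle e_j,E(\lambda)\rangle$. Since $\vert\lambda\vert=1$ we have $\vert\vert h_i\vert\vert^2_{L^2(\mathbb{T},\mu)}=\vert\vert g_i\vert\vert^2_{L^2(\mathbb{T},\mu)}=2\sigma_i^2$ by Corollary~\ref{orthogonality}, and the same corollary shows that the family $\big(g_j/(\sqrt2\,\sigma_j)\big)_{j:\,\sigma_j>0}$ is orthonormal in $L^2(\mathbb{T},\mu)$. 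Writing $\tfrac1{4\sigma_j^2}\vert\langle RT^{*n}e_i,e_j\rangle\vert^2=\tfrac12\big\vert\big\langle h_i,\,g_j/(\sqrt2\,\sigma_j)\big\rangle_{L^2(\mathbb{T},\mu)}\big\vert^2$, Bessel's inequality then gives $\sum_{j\ge 1}\tfrac1{4\sigma_j^2}\vert\langle RT^{*n}e_i,e_j\rangle\vert^2\le\tfrac12\vert\vert h_i\vert\vert^2_{L^2(\mathbb{T},\mu)}=\sigma_i^2$.

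Summing this over $i$ and using $\sum_{i\ge 1}\sigma_i^2=\tfrac12\sum_{i\ge 1}\int_{\mathbb{T}}\vert\langle e_i,E(\lambda)\rangle\vert^2\,d\mu(\lambda)=\tfrac12\int_{\mathbb{T}}\vert\vert E(\lambda)\vert\vert^2\,d\mu(\lambda)=\tfrac12\vert\vert E\vert\vert_2^2$ (Corollary~\ref{orthogonality} together with monotone convergence) would complete the proof. I expect the only genuinely delicate point to be the bookkeeping of the two indices: one must perform the sum over the ``second-slot'' index $j$ first, so that $\langle RT^{*n}e_i,e_j\rangle$ is read with $h_i$ fixed and $g_j$ ranging over the orthogonal family---this is precisely the arrangement in which the weights $\sigma_j^{-2}$ cancel the non-normalized norms $\vert\vert g_j\vert\vert^2_{L^2(\mathbb{T},\mu)}=2\sigma_j^2$, so that Bessel's inequality applies verbatim. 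Everything else is routine.
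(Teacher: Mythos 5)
Your proof is correct and is essentially the paper's own argument: both reduce the sum to $\sum_{i,j}\tfrac{1}{4\sigma_j^2}\vert\langle RT^{*n}e_i,e_j\rangle\vert^2$ via the identity $\langle RT^{*n}e_i,e_j\rangle=2\sigma_j^2\langle e_i,T^ne_j\rangle$ and Parseval in $\mathcal{H}$, then apply Bessel's inequality in $L^2(\mathbb{T},\mu)$ with the orthonormal family $\big(\langle e_j,E(\cdot)\rangle/(\sqrt{2}\,\sigma_j)\big)_j$ against the fixed function $\lambda\mapsto\lambda^n\langle e_i,E(\lambda)\rangle$, and sum over $i$ using $\sum_i 2\sigma_i^2=\Vert E\Vert_2^2$. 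The only cosmetic difference is that the paper reaches the same double sum by writing $2\sigma_k^2T^ne_k=KV^nK^{*}e_k$ and expanding $E(\lambda)$ in the basis $(e_p)$.
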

\begin{proof}
Since $Re_k=2\, \sigma_k^2\,e_k$ for any positive integer $k$, the intertwining equation $TK=KV$ gives us
\begin{equation*}
2\sigma_k^2\, T^ne_k =T^n KK^{*}e_k=KV^nK^{*}e_k=\int_{\mathbb{T}}\lambda^{n}\overline{\langle e_k,E(\lambda)\rangle}E(\lambda)\,d\mu(\lambda).
\end{equation*}
Then, for any positive integer $k$, we define the function $\omega_k:=\overline{\frac{\langle e_k, E(\cdot)\rangle}{\sigma_k\sqrt{2}}}$ and we get by applying the Parseval theorem in $(\mathcal{H}, \langle \cdot,\cdot \rangle)$ that
\begin{align*}
2\sum_{k=1}^{+\infty}\sigma_k^2\, \vert\vert T^n e_k \vert\vert^2&=\sum_{k=1}^{+\infty}\bigg\vert\bigg\vert \int_{\mathbb{T}}\lambda^n\frac{\overline{\langle e_k,E(\lambda)\rangle}}{\sigma_k\sqrt{2}}E(\lambda)\,d\mu(\lambda) \bigg\vert\bigg\vert^2 \\
&=\sum_{k=1}^{+\infty}\bigg\vert\bigg\vert \sum_{p=1}^{+\infty}\int_{\mathbb{T}}\lambda^n\omega_k(\lambda) \langle e_p, E(\lambda)\rangle\, d\mu(\lambda)\, e_p
 \bigg\vert\bigg\vert^2\\
 &=\sum_{(k,p)\in \mathbb{N}^2}\bigg\vert\int_{\mathbb{T}}\lambda^n \omega_k(\lambda) \langle e_p, E(\lambda)\rangle\, d\mu(\lambda)\bigg\vert^2.
\end{align*}
By Corollary \ref{orthogonality}, the sequence $\big(\overline{\omega_k}\big)_{k\in \mathbb{N}}$ is orthogonal in $L^2(\mathbb{T},\mu)$ and for any positive integer $k$, the norm of $\omega_k$ in this space is equal to $1$. Hence, if we denote by $\Phi_{n,p}\in L^2(\mathbb{T},\mu)$ the function $\lambda \mapsto \overline{\lambda^n \langle e_p, E(\lambda)\rangle}$, the Bessel theorem in $L^2(\mathbb{T}, \mu)$ gives us
\begin{align*}
\sum_{k=1}^{+\infty}\bigg\vert \int_{\mathbb{T}}\omega_k(\lambda)\overline{\Phi_{n,p}(\lambda)}\, d\mu(\lambda)\bigg\vert^2&\le \vert\vert \Phi_{n,p}\vert\vert_{L^2(\mathbb{T},\mu)}^2\\
&=\int_{\mathbb{T}}\big\vert \lambda^n\langle e_p, E(\lambda)\rangle \big\vert^2\, d\mu(\lambda)\\
&=\int_{\mathbb{T}}\big\vert \langle e_p, E(\lambda)\rangle \big\vert^2\, d\mu(\lambda).
\end{align*}
Hence, we can conclude that $2\sum_{k\ge 1}^{}\sigma_{k}^2\, \vert\vert T^ne_k \vert\vert^2\le \vert\vert E \vert\vert_{2}^2$ for any positive integer $n$.
\end{proof}

To finish, we need an estimate of the moments of the measure $m$.

\begin{Prop}\label{moments}
For any positive integer $k$, we have the following estimate:
\begin{equation*}
\int_{\mathcal{H}}\vert\vert x \vert\vert^{2k}\, dm(x)\le k!\, \vert\vert E \vert\vert_2^{2k}.
\end{equation*}
\end{Prop}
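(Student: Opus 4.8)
The plan is to exploit the orthonormal basis $(e_p)_{p\ge1}$ of $\mathcal H$ consisting of eigenvectors of the covariance operator $R$, together with the independence of the coordinate functionals established in Proposition \ref{basis}. First I would write $\|x\|^2=\sum_{p\ge1}|\langle e_p,x\rangle|^2$ and set $Y_p:=|\langle e_p,\cdot\rangle|^2$, so that the random variables $(Y_p)_{p\ge1}$ are independent on $(\mathcal H,\mathcal B,m)$. The key observation is that each $Y_p$ is the sum of the squares of the two \emph{independent} centred real Gaussian variables $\mathfrak{Re}\langle e_p,\cdot\rangle$ and $\mathfrak{Im}\langle e_p,\cdot\rangle$, both of variance $\sigma_p^2$; a routine computation in polar coordinates then shows that $Y_p$ follows an exponential law of mean $2\sigma_p^2$, whence
\begin{equation*}
\int_{\mathcal H}Y_p(x)^\ell\,dm(x)=\ell!\,(2\sigma_p^2)^\ell\qquad\text{for every integer }\ell\ge0.
\end{equation*}
It is this exact moment identity — which relies crucially on the complex structure, a single real Gaussian squared only giving the larger factor $(2\ell-1)!!$ — that produces the sharp constant $k!$.

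Next I would expand $\|x\|^{2k}=\big(\sum_{p\ge1}Y_p\big)^k$ and integrate term by term, which is legitimate since all terms are nonnegative. For a $k$-tuple $(p_1,\dots,p_k)$ whose distinct entries $q_1<\dots<q_r$ occur with multiplicities $m_1,\dots,m_r$, independence and the moment identity give $\int_{\mathcal H}Y_{p_1}\cdots Y_{p_k}\,dm=\prod_{t=1}^r m_t!\,(2\sigma_{q_t}^2)^{m_t}$, while by Fact \ref{faitcombinatoire} there are exactly $k!/(m_1!\cdots m_r!)$ tuples with this multiplicity profile. Summing over all profiles, the factors $\prod_t m_t!$ cancel and one obtains
\begin{equation*}
\int_{\mathcal H}\|x\|^{2k}\,dm(x)=k!\sum\ \prod_{t}(2\sigma_{q_t}^2)^{m_t},
\end{equation*}
the sum running over all finite multisets of indices of total size $k$.

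It then remains to bound this sum by $\big(\sum_{p\ge1}2\sigma_p^2\big)^k$: expanding the $k$-th power of the nonnegative convergent series $\sum_{p\ge1}2\sigma_p^2$ reproduces every one of those monomials at least once (with multiplicity $k!/(m_1!\cdots m_r!)\ge1$), so the inequality follows from positivity of all terms. Finally Corollary \ref{orthogonality} gives $\sum_{p\ge1}2\sigma_p^2=\sum_{p\ge1}\int_{\mathbb T}|\langle e_p,E(\lambda)\rangle|^2\,d\mu(\lambda)=\int_{\mathbb T}\|E(\lambda)\|^2\,d\mu(\lambda)=\|E\|_2^2$, finite because $E$ is bounded, which yields the announced estimate $\int_{\mathcal H}\|x\|^{2k}\,dm\le k!\,\|E\|_2^{2k}$.

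I do not expect a genuine obstacle here. The only points needing a little care are the exact identification of the law of $Y_p$ (so that one really gets an equality $\int_{\mathcal H}Y_p^\ell\,dm=\ell!(2\sigma_p^2)^\ell$, not merely an inequality) and the justification of interchanging sum and integral, both of which are immediate from positivity and from $\sum_{p\ge1}\sigma_p^2<+\infty$ ($R$ being of trace class). An essentially equivalent alternative would be to compute the Laplace transform $\int_{\mathcal H}e^{t\|x\|^2}\,dm=\prod_{p\ge1}(1-2\sigma_p^2t)^{-1}$ for small $t>0$, recognise its $k$-th Taylor coefficient as the complete homogeneous symmetric function $h_k\big((2\sigma_p^2)_{p\ge1}\big)$, and use $h_k\le\big(\sum_{p\ge1}2\sigma_p^2\big)^k$.
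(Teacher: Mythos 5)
Your proposal is correct and follows essentially the same route as the paper: expand $\|x\|^{2k}$ over the orthonormal basis of eigenvectors of $R$, use independence of the coordinate functionals together with the exact moment identity $\int_{\mathcal H}|\langle e_p,x\rangle|^{2\ell}\,dm=\ell!\,(2\sigma_p^2)^{\ell}$, and control the resulting combinatorics (your cancellation of $\prod_t m_t!$ against the multinomial count followed by $h_k\le(\sum_p 2\sigma_p^2)^k$ is the same estimate the paper obtains termwise from $i!\,j!\le(i+j)!$). The only cosmetic difference is that you derive the moment identity from the exponential law of $|\langle e_p,\cdot\rangle|^2$ where the paper cites it from the literature.
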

\begin{proof}
The case $k=1$ is important for the rest of the proof. It is a consequence of Corollary \ref{orthogonality}:
\begin{equation*}
\int_{\mathcal{H}}\vert\vert x \vert\vert^2\, dm(x)=2\sum_{j=1}^{+\infty}\sigma_{j}^2=\vert\vert E \vert \vert_{2}^2.
\end{equation*}
We now fix a positive integer $k$. We expand our integral as
\begin{equation*}
\int_{\mathcal{H}}\vert\vert x \vert\vert^{2k}\,dm(x)=\sum_{(j_1,\dots,j_k)\in \mathbb{N}^k}\int_{\mathcal{H}}\vert\langle e_{j_1},x\rangle\vert^2\dots \vert \langle e_{j_k},x\rangle\vert^2\, dm(x)
\end{equation*}
and we need to estimate the integrals
\begin{equation*}
\int_{\mathcal{H}}\vert\langle e_{j_1},x\rangle\vert^2\dots \vert \langle e_{j_k},x \rangle\vert^2\, dm(x).
\end{equation*}
Each integral can be written in the form
\begin{equation*}
\int_{\mathcal{H}}\vert\langle e_{i_1},x\rangle\vert^{2\ell_1}\dots\vert\langle  e_{i_r},x \rangle\vert^{2\ell_r}\, dm(x)
\end{equation*}
where $r\in\{1,\dots,k\}$, $(\ell_1,\dots,\ell_r)\in \mathbb{N}^r$ with $\ell_1+\dots+\ell_r=k$ and $i_1<\dots<i_r$. It can be proved that 
\begin{equation*}
\int_\mathcal{H} \vert\langle y,x\rangle\vert^{2i}\,dm(x)=i!\bigg(\int_\mathcal{H}\vert\langle y,x\rangle\vert^2\,dm(x)\bigg)^i=i!\,2^i\,\sigma_y^{2i}
\end{equation*}
for every vector $y$ of $\mathcal{H}$ and any nonnegative integer $i$ (see for instance \cite{J}, Chapter $1$). But the random variables $\langle e_k,\cdot\rangle$ are independent by Proposition \ref{basis}. So we deduce that
\begin{align*}
\int_{\mathcal{H}}\vert\langle e_{i_1},x\rangle\vert^{2\ell_1}\dots\vert\langle  e_{i_r},x \rangle\vert^{2\ell_r}\, dm(x)=\prod_{t=1}^{r}\int_{\mathcal{H}}\vert \langle e_{i_t},x\rangle \vert^{2\ell_t}\,dm(x)&=\prod_{t=1}^{r}\big(\ell_t!\, 2^{\ell_t}\sigma_{i_t}^{2\ell_t}\big)\\
&=\Bigg(\prod_{t=1}^{r}\ell_t!\Bigg)2^k\sigma_{i_1}^{2\ell_1}\dots\sigma_{i_r}^{2\ell_r},
\end{align*} 
since $\ell_1+\dots+\ell_r=k$. We now use the inequality $i!\,j!\le (i+j)!$, which is easily seen to be true for any nonnegative integers $i$ and $j$, and we get
\begin{equation*}
\int_{\mathcal{H}}\vert\langle e_{i_1},x\rangle\vert^{2\ell_1}\dots\vert\langle e_{i_r},x \rangle\vert^{2\ell_r}\, dm(x)\le k!\,2^k\, \sigma_{i_1}^{2\ell_1}\dots\sigma_{i_r}^{2\ell_r}.
\end{equation*}
Eventually, we find that
\begin{equation*}
\int_{\mathcal{H}}\vert\vert x \vert\vert^{2k}\,dm(x)\le k!\, 2^k \sum_{(j_1,\dots, j_k)\in \mathbb{N}^k}\sigma_{j_1}^2\dots\sigma_{j_k}^2=k!\,\bigg(2\sum_{j=1}^{+\infty}\sigma_j^2\bigg)^k=k!\, \vert\vert E \vert\vert_2^{2k}.
\end{equation*}
\end{proof}

By using the estimate of Proposition \ref{moments}, we can prove that, given a function $f$ in $L^2(\mathcal{H},\mathcal{B},m)$ such that the multilinear forms $\mathcal{B}_{f_k}$ are bounded (where $f=\sum_{k\ge 0}f_k$ is the Wiener chaos decomposition of $f$), then the series $\sum_{(i_1,\dots,i_k)\in (\mathbb{Z}^*)^k} \big\vert\alpha_{i_1,\dots,i_k}^{(k)}\big\vert^2 \sigma_{i_1}^2\dots \sigma_{i_{k-1}}^2$ is convergent. In particular, it is the case when the function $f$ is an infinitely differentiable real-valued function on $\mathcal{H}$ which satisfies condition \eqref{intfinite}.
\begin{Cor}\label{sommeconvergente}
Let $f\in L^2_\mathbb{R}(\mathcal{H},\mathcal{B},m)$ where $f_k$ is written as in \eqref{developpement2} such that the multilinear forms $\mathcal{B}_{f_k}$ are bounded. Then the series $\sum_{(i_1,\dots,i_k)\in (\mathbb{Z}^*)^k}\big\vert \alpha_{i_1,\dots,i_k}^{(k)}\big\vert^2\, \sigma_{i_1}^2\dots\sigma_{i_{k-1}}^2$ is convergent for any positive integer $k$. More precisely, we have the following estimate:
\begin{equation}\label{finite2}
\sum_{(i_1,\dots, i_k)\in (\mathbb{Z}^*)^k}\big\vert \alpha_{i_1,\dots, i_k}^{(k)}\big\vert^2\,\sigma_{i_1}^2\dots\sigma_{i_{k-1}}^2\le \vert\vert \mathcal{B}_{f_k}\vert\vert^2\,\vert\vert E \vert\vert_2^{2(k-1)}.
\end{equation}
\end{Cor}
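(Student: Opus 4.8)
The plan is to extract the estimate \eqref{finite2} from the boundedness of $\mathcal{B}_{f_k}$ used in a \emph{single} variable slot; by the symmetry of the array $\big(\alpha_{i_1,\dots,i_k}^{(k)}\big)_{(i_1,\dots,i_k)\in(\mathbb{Z}^*)^k}$ the choice of slot is immaterial, so I single out the last one. Writing $(\delta_i)_{i\in\mathbb{Z}^*}$ for the canonical basis of $\ell_2(\mathbb{Z}^*,\mathbb{R})$, definition \eqref{multilinearform} gives $\mathcal{B}_{f_k}(\delta_{i_1},\dots,\delta_{i_k})=\alpha_{i_1,\dots,i_k}^{(k)}$. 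For each fixed $(i_1,\dots,i_{k-1})\in(\mathbb{Z}^*)^{k-1}$, the partial evaluation $y\longmapsto\mathcal{B}_{f_k}(\delta_{i_1},\dots,\delta_{i_{k-1}},y)$ is a bounded linear functional on $\ell_2(\mathbb{Z}^*,\mathbb{R})$ of norm at most $\vert\vert\mathcal{B}_{f_k}\vert\vert$ (since $\vert\vert\delta_{i_j}\vert\vert_2=1$), and it is represented, in the sense of Riesz, by the sequence $\big(\alpha_{i_1,\dots,i_{k-1},i_k}^{(k)}\big)_{i_k\in\mathbb{Z}^*}$. Parseval's identity then gives
\[
\sum_{i_k\in\mathbb{Z}^*}\big\vert\alpha_{i_1,\dots,i_k}^{(k)}\big\vert^2\ \le\ \vert\vert\mathcal{B}_{f_k}\vert\vert^2\qquad\text{for every }(i_1,\dots,i_{k-1})\in(\mathbb{Z}^*)^{k-1}.
\]

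With this bound, which is uniform in $(i_1,\dots,i_{k-1})$, the rest is a reorganisation of a series of nonnegative terms. By Tonelli's theorem,
\[
\sum_{(i_1,\dots,i_k)\in(\mathbb{Z}^*)^k}\big\vert\alpha_{i_1,\dots,i_k}^{(k)}\big\vert^2\sigma_{i_1}^2\cdots\sigma_{i_{k-1}}^2
=\sum_{(i_1,\dots,i_{k-1})\in(\mathbb{Z}^*)^{k-1}}\sigma_{i_1}^2\cdots\sigma_{i_{k-1}}^2\bigg(\sum_{i_k\in\mathbb{Z}^*}\big\vert\alpha_{i_1,\dots,i_k}^{(k)}\big\vert^2\bigg)
\le\vert\vert\mathcal{B}_{f_k}\vert\vert^2\bigg(\sum_{i\in\mathbb{Z}^*}\sigma_i^2\bigg)^{k-1}.
\]
It then remains only to identify $\sum_{i\in\mathbb{Z}^*}\sigma_i^2$. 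Using $\sigma_{-\ell}^2=\sigma_\ell^2$ together with Corollary \ref{orthogonality} (equivalently, the case $k=1$ of Proposition \ref{moments}), this sum equals $2\sum_{\ell\ge1}\sigma_\ell^2=\int_{\mathcal{H}}\vert\vert x\vert\vert^2\,dm(x)=\vert\vert E\vert\vert_2^2$, which gives exactly \eqref{finite2}; in particular the series converges, its partial sums being dominated by the finite number $\vert\vert\mathcal{B}_{f_k}\vert\vert^2\,\vert\vert E\vert\vert_2^{2(k-1)}$. By Theorem \ref{bounded}, the hypothesis that $\mathcal{B}_{f_k}$ be bounded holds in particular whenever $f$ is infinitely differentiable and satisfies \eqref{intfinite}, in which case $\vert\vert\mathcal{B}_{f_k}\vert\vert=\frac{1}{k!}\big\vert\big\vert\int_{\mathcal{H}}D^kf(x)\,dm(x)\big\vert\big\vert$.

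I do not expect a real obstacle here: the substance of the statement is that the family $(\sigma_i^2)_{i\in\mathbb{Z}^*}$ is summable with total mass $\vert\vert E\vert\vert_2^2$ — this is precisely where the trace-class nature of the covariance operator $R$ enters, via Corollary \ref{orthogonality} — so that the $(k-1)$-fold $\sigma$-weighted sum of the coefficients of a \emph{merely bounded} $k$-linear form is automatically finite. The one step that deserves an explicit line is that the one-variable slices of $\mathcal{B}_{f_k}$ are genuine bounded functionals of norm at most $\vert\vert\mathcal{B}_{f_k}\vert\vert$, hence square-summable; this follows from the running convention (the remark after \eqref{multilinearform}) that $\mathcal{B}_{f_k}$ being bounded already means its defining series converges for all arguments in $\ell_2(\mathbb{Z}^*,\mathbb{R})$, or, more concretely, by testing $\mathcal{B}_{f_k}$ against the finitely supported truncations $\big(\alpha_{i_1,\dots,i_{k-1},i_k}^{(k)}\big)_{\vert i_k\vert\le N}$ of the slice and letting $N\to\infty$.
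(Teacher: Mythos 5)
Your proof is correct, but it follows a genuinely different and in fact more economical route than the paper's. The paper proves \eqref{finite2} by working inside $L^2(\mathcal{H},\mathcal{B},m)$: it introduces the auxiliary quantity $\mathcal{S}_k=\sum_{i_k}\big\Vert\sum_{(i_1,\dots,i_{k-1})}\alpha^{(k)}_{i_1,\dots,i_k}\,\mathfrak{Re}\langle\mathfrak{e}_{i_1},\cdot\rangle\cdots\mathfrak{Re}\langle\mathfrak{e}_{i_{k-1}},\cdot\rangle\big\Vert_{L^2(m)}^2$ (with the Wick transform inserted), bounds it from above using that $:\!\cdot\!:\,=Id-\mathcal{P}_{k-1}$ is an orthogonal projection together with the boundedness of $\mathcal{B}_{f_k}$ and the moment estimate $\int_{\mathcal{H}}\Vert x\Vert^{2(k-1)}dm\le (k-1)!\,\Vert E\Vert_2^{2(k-1)}$ of Proposition \ref{moments}, and then evaluates $\mathcal{S}_k$ exactly via Proposition \ref{variance} and the combinatorial Fact \ref{faitcombinatoire}, the factor $(k-1)!$ cancelling at the end. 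You bypass the Gaussian measure entirely: slicing $\mathcal{B}_{f_k}$ in its last variable, applying Riesz--Parseval to each slice to get $\sum_{i_k}\vert\alpha^{(k)}_{i_1,\dots,i_k}\vert^2\le\Vert\mathcal{B}_{f_k}\Vert^2$ uniformly, and then summing with Tonelli against $\big(\sum_{i\in\mathbb{Z}^*}\sigma_i^2\big)^{k-1}=\Vert E\Vert_2^{2(k-1)}$ (Corollary \ref{orthogonality}). Both arguments land on the identical constant, but yours needs neither the moment estimate nor any Wick-calculus combinatorics, only the trace-class summability $\sum_i\sigma_i^2=\Vert E\Vert_2^2$; the paper's detour through $\mathcal{S}_k$ does have the side benefit of exhibiting the exact value $\Vert f_k\Vert_{L^2(m)}^2=k!\sum\vert\alpha^{(k)}\vert^2\sigma_{i_1}^2\cdots\sigma_{i_k}^2$, a computation reused later in Proposition \ref{propositionexemples}. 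Your closing remark on why the one-variable slices are legitimately given by convergent series (via the paper's convention after \eqref{multilinearform}, or by truncation) addresses the only point where the shorter argument could be challenged.
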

\begin{proof}
In order to get this estimate, we consider the quantities
\begin{equation}\label{expression}
\mathcal{S}_k:=\sum_{i_k\in \mathbb{Z}^*}\int_{\mathcal{H}}\bigg(\sum_{(i_1,\dots, i_{k-1})\in (\mathbb{Z}^*)^{k-1}}\alpha_{i_1,\dots, i_k}^{(k)}:\mathfrak{Re}\langle \mathfrak{e}_{i_1},\cdot\rangle \dots \mathfrak{Re}\langle \mathfrak{e}_{i_{k-1}},\cdot\rangle :(x)\bigg)^2\,dm(x).
\end{equation}
First, we give an upper bound of $\mathcal{S}_k$ and secondly we compute explicitly this quantity. Recall that for a positive integer $j$ and for a function $g$ in $\mathcal{G}^{j}$, the Wick transform of $g$ is defined by $:g:=(Id-\mathcal{P}_{j})g$, where $Id$ is the identity operator and $\mathcal{P}_{j}$ denotes the orthogonal projection onto $\displaystyle \overline{\text{span}}^{L^2_{\mathbb{R}}(\mathcal{H},\mathcal{B},m)}\big[\mathcal{G}^i\, ; \, 0\le i \le j-1\big]$. Since the functions $\mathfrak{Re}\langle \mathfrak{e}_{i_1},\cdot\rangle\dots\mathfrak{Re}\langle \mathfrak{e}_{i_{k-1}},\cdot\rangle$ belong to $\mathcal{G}^{k-1}$, we have
\begin{align*}
\mathcal{S}_k&=\sum_{i_k\in \mathbb{Z}^*}\bigg\vert\bigg\vert (Id-\mathcal{P}_{k-1})\Big( \sum_{(i_1,\dots, i_{k-1})\in (\mathbb{Z}^*)^{k-1}} \alpha_{i_1,\dots, i_k}^{(k)}\,\mathfrak{Re}\langle \mathfrak{e}_{i_1},\cdot\rangle \dots \mathfrak{Re}\langle \mathfrak{e}_{i_{k-1}},\cdot\rangle\Big) \bigg\vert\bigg\vert^2_{L^2(m)}\\
&\le \sum_{i_k\in \mathbb{Z}^*}\bigg\vert\bigg\vert  \sum_{(i_1,\dots, i_{k-1})\in (\mathbb{Z}^*)^{k-1}} \alpha_{i_1,\dots, i_k}^{(k)}\,\mathfrak{Re}\langle \mathfrak{e}_{i_1},\cdot\rangle \dots \mathfrak{Re}\langle \mathfrak{e}_{i_{k-1}},\cdot\rangle \bigg\vert\bigg\vert^2_{L^2(m)}.
\end{align*}
Now, the upper bound comes from the boundedness of the $k$-linear form $\mathcal{B}_{f_k}$. Indeed, for every vectors $x,z$ of $\mathcal{H}$, we know that
\begin{equation*}
\Big\vert\sum_{(i_1,\dots, i_k)\in (\mathbb{Z}^*)^k}\alpha_{i_1,\dots, i_k}^{(k)}\,\mathfrak{Re}\langle \mathfrak{e}_{i_1},x\rangle\dots \mathfrak{Re}\langle \mathfrak{e}_{i_{k-1}},x\rangle\mathfrak{Re}\langle \mathfrak{e}_{i_k},z\rangle\Big\vert\le \vert\vert \mathcal{B}_{f_k}\vert\vert\, \vert\vert x \vert\vert^{k-1}\, \vert\vert z \vert\vert.
\end{equation*}
By taking the supremum over all the vectors $z$ in the closed unit ball of $\mathcal{H}$, we get
\begin{equation*}
\sum_{i_k\in \mathbb{Z}^*}\Big(\sum_{(i_1,\dots, i_{k-1})\in (\mathbb{Z}^*)^{k-1}}\alpha_{i_1,\dots, i_k}^{(k)}\,\mathfrak{Re}\langle \mathfrak{e}_{i_1},x\rangle\dots \mathfrak{Re}\langle \mathfrak{e}_{i_{k-1}},x\rangle\Big)^2\le \vert\vert \mathcal{B}_{f_k} \vert\vert^2\, \vert\vert x \vert\vert^{2(k-1)}.
\end{equation*}
Then, we deduce from the beginning of the proof that
\begin{equation}\label{premier}
\mathcal{S}_k\le \vert\vert \mathcal{B}_{f_k}\vert\vert^2 \int_{\mathcal{H}}\vert\vert x \vert\vert^{2(k-1)}\, dm(x)\le (k-1)!\,\vert\vert 
\mathcal{B}_{f_k}\vert\vert ^2\, \vert\vert E \vert\vert_{2}^{2(k-1)},
\end{equation}
where the last inequality results from Proposition \ref{moments}. The second part of the proof consists in the computation of $\mathcal{S}_k$. We expand $\mathcal{S}_k$ as
\begin{align}\label{integrales}
\notag \mathcal{S}_k=\sum_{i_k\in \mathbb{Z}^*}&\sum_{\substack{(i_1,\dots,i_{k-1})\in (\mathbb{Z}^*)^{k-1}\\(j_1,\dots,j_{k-1})\in (\mathbb{Z}^*)^{k-1}}}\alpha_{i_1,\dots, i_{k-1},i_k}^{(k)}\,\alpha_{j_1,\dots, j_{k-1},i_k}^{(k)}\\
 &\times \int_{\mathcal{H}}:\mathfrak{Re}\langle \mathfrak{e}_{i_1},\cdot\rangle\dots\mathfrak{Re}\langle \mathfrak{e}_{i_{k-1}},\cdot\rangle:(x):\mathfrak{Re}\langle \mathfrak{e}_{j_1},\cdot\rangle\dots\mathfrak{Re}\langle \mathfrak{e}_{j_{k-1}},\cdot\rangle:(x)\,dm(x)
\end{align}
 and the computation of the integrals \eqref{integrales} is given by the following combinatorial fact.
\begin{Fac}\label{fait}
The integral 
\begin{align*}
\int_{\mathcal{H}}:\mathfrak{Re}\langle \mathfrak{e}_{i_1},\cdot\rangle\dots\mathfrak{Re}\langle \mathfrak{e}_{i_{k-1}},\cdot\rangle:(x):\mathfrak{Re}\langle \mathfrak{e}_{j_1},\cdot\rangle\dots\mathfrak{Re}\langle \mathfrak{e}_{j_{k-1}},\cdot\rangle:(x)\,dm(x)
\end{align*}
denoted by $\mathcal{I}(i_1,\dots,i_{k-1}\,;\, j_1,\dots,j_{k-1})$ is nonzero if and only if there exists a permutation $\tau$ in $\mathfrak{S}_{k-1}$ such that for every integer $\ell$ in $\{1,\dots,k-1\}$, $i_{\ell}=j_{\tau(\ell)}$.
\end{Fac}
\begin{proof}
This result is a consequence of Proposition \ref{correlationpoly}. Indeed, for $n=0$,  this proposition gives us that
\begin{equation*}
\mathcal{I}(i_1,\dots,i_{k-1}\,;\, j_1,\dots,j_{k-1})=\sigma_{j_1}^2\dots\sigma_{j_{k-1}}^2\sum_{\tau\in \mathfrak{S}_{k-1}}\mathfrak{Re}\langle \mathfrak{e}_{i_1}, \mathfrak{e}_{j_{\tau(1)}}\rangle\dots\mathfrak{Re}\langle \mathfrak{e}_{i_{k-1}}, \mathfrak{e}_{j_{\tau(k-1)}}\rangle.
\end{equation*}
Then, the integral is nonzero if and only if there exists a permutation $\tau$ in $\mathfrak{S}_{k-1}$ such that 
\begin{equation*}
\mathfrak{Re}\langle \mathfrak{e}_{i_1}, \mathfrak{e}_{j_{\tau(1)}}\rangle\dots\mathfrak{Re}\langle \mathfrak{e}_{i_{k-1}}, \mathfrak{e}_{j_{\tau(k-1)}}\rangle\ne 0 
\end{equation*}
since each term in the sum above is equal to $0$ or $1$ by the orthogonality of the sequence $(e_\ell)_{\ell \in \mathbb{N}}$. This means that for every integer $\ell$ in the set $\{1,\dots,k-1 \}$, $i_\ell=j_{\tau(\ell)}$.
\end{proof}
\noindent We now proceed with the proof of Corollary \ref{sommeconvergente}: Fact \ref{fait} above allows us to rewrite $\mathcal{S}_k$ as
\begin{align*}
\mathcal{S}_k=\sum_{i_k\in \mathbb{Z}^*}&\sum_{(i_1,\dots, i_{k-1})\in (\mathbb{Z}^*)^{k-1}}\sum_{\substack{(j_1,\dots,j_{k-1})\in (\mathbb{Z}^*)^{k-1}\\ \{i_1,\dots,i_{k-1}\}=\{j_1,\dots,j_{k-1}\}}}\big\vert\alpha_{i_1,\dots,i_{k-1},i_k}^{(k)}\big\vert^2\\
&\times\int_{\mathcal{H}}:\mathfrak{Re}\langle \mathfrak{e}_{i_1},\cdot\rangle\dots\mathfrak{Re}\langle \mathfrak{e}_{i_{k-1}},\cdot\rangle:(x):\mathfrak{Re}\langle \mathfrak{e}_{j_1},\cdot\rangle\dots\mathfrak{Re}\langle \mathfrak{e}_{j_{k-1}},\cdot\rangle:(x)\,dm(x)
\end{align*}
since the sequence $\big(\alpha_{i_1,\dots,i_k}^{(k)}\big)_{(i_1,\dots, i_k)\in (\mathbb{Z}^*)^k}$ is symmetric. A $(k-1)$-tuple of integers different from zero $(i_1,\dots,i_{k-1})$ can be written as $\displaystyle \{i_1,\dots,i_{k-1}\}=\{\underbrace{\ell_1,\dots,\ell_1}_{p_1\ \text{times}},\dots,\underbrace{\ell_r,\dots,\ell_r}_{p_r\ \text{times}}\}$ where $1\le r \le k-1$, $p_1+\dots+p_r=k-1$ and with $\ell_p\ne \ell_q$ when $p \ne q$. Then for any $(j_1,\dots,j_{k-1})$ in $(\mathbb{Z}^*)^{k-1}$ such that $\{i_1,\dots,i_{k-1}\}=\{j_1,\dots,j_{k-1}\}$, we know from Proposition \ref{variance} that
\begin{eqnarray*}
\mathcal{I}(i_1,\dots,i_{k-1}\,;\,j_1,\dots,j_{k-1})&=&\mathcal{I}(\underbrace{\ell_1,\dots,\ell_1}_{p_1\ \text{times}},\dots,\underbrace{\ell_r,\dots,\ell_r}_{p_r\ \text{times}}\,;\,\underbrace{\ell_1,\dots,\ell_1}_{p_1\ \text{times}},\dots,\underbrace{\ell_r,\dots,\ell_r}_{p_r\ \text{times}})\\
&=&\text{var}_m\big[:(\mathfrak{Re}\langle \mathfrak{e}_{\ell_1},\cdot\rangle)^{p_1}\dots(\mathfrak{Re}\langle \mathfrak{e}_{\ell_r},\cdot\rangle)^{p_r}:\big]\\
&=&p_1!\dots p_r!\ \sigma_{\ell_1}^{2p_1}\dots\sigma_{\ell_r}^{2p_r}.
\end{eqnarray*}
But we know from Fact \ref{faitcombinatoire} that the number of $(k-1)$-tuples $(j_1,\dots,j_{k-1})$ such that $\{i_1,\dots, i_{k-1}\}=\{j_1,\dots, j_{k-1}\}$ is equal to $\frac{(k-1)!}{p_1!\dots p_r!}\cdot$ Then we deduce that
\begin{equation}\label{deuxieme}
\mathcal{S}_k=(k-1)!\, \sum_{(i_1,\dots,i_k)\in (\mathbb{Z}^*)^k}\big\vert\alpha_{i_1,\dots,i_k}^{(k)}\big\vert^2\, \sigma_{i_1}^{2}\dots\sigma_{i_{k-1}}^2,
\end{equation}
and the result follows readily from \eqref{premier} and \eqref{deuxieme}.
\end{proof}
At this stage, we are able to prove the main result which gives the rate of mixing in each space $:\mathcal{G}^k:$.
\begin{Theo}\label{correlations}
Let $f_k,g_k $ be two functions in the space $:\mathcal{G}^k:$ such that the $k$-linear forms $\mathcal{B}_{f_k}$ and $\mathcal{B}_{g_k}$ are bounded. Then, for any positive integer $n$, we have
\begin{equation*}
\big\vert \mathcal{I}_n(f_k,g_k) \big\vert \le k!\,\frac{C(E)\,\pi^{\alpha}}{n^{\alpha}}\,\vert\vert E \vert\vert_2^{2k-1}\,\vert\vert \mathcal{B}_{f_k}\vert\vert\,\vert\vert \mathcal{B}_{g_k}\vert\vert.
\end{equation*}
\end{Theo}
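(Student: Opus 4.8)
The plan is to unwind $\mathcal{I}_n(f_k,g_k)$ into a contraction of the $k$-linear form $\mathcal{B}_{f_k}$ against the coefficients of $g_k$, and then to route exactly one of the $k$ tensor slots through the $n^{-\alpha}$-type estimate of Corollary~\ref{ecriture2}, controlling the remaining $k-1$ slots by Lemma~\ref{seriemajoree} and the coefficient side by Corollary~\ref{sommeconvergente}.

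First I would note that, since $k\ge1$, the functions $f_k,g_k$ are centred, so $\mathcal{I}_n(f_k,g_k)=\int_{\mathcal H}f_k(T^nx)\,g_k(x)\,dm(x)$. Expanding $f_k$ and $g_k$ in the symmetric form~\eqref{developpement2}, with coefficient sequences $\big(\alpha^{(k)}_{i_1,\dots,i_k}\big)$ and $\big(\beta^{(k)}_{i_1,\dots,i_k}\big)$, I substitute Proposition~\ref{correlationpoly} term by term; the symmetry of $\big(\beta^{(k)}_{i_1,\dots,i_k}\big)$ together with the permutation-invariance of $\sigma_{\ell_1}^2\cdots\sigma_{\ell_k}^2$ collapses the sum over $\mathfrak{S}_k$ and produces a factor $k!$. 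Putting $w^{(n)}_\ell:=\big(\sigma_\ell^2\,\mathfrak{Re}\langle\mathfrak{e}_i,T^n\mathfrak{e}_\ell\rangle\big)_{i\in\mathbb{Z}^*}$ and computing, by splitting $\langle\mathfrak{e}_i,\cdot\rangle$ into real and imaginary parts, that $\|w^{(n)}_\ell\|_2^2=\sigma_\ell^4\,\|T^ne_{|\ell|}\|^2$ (so $w^{(n)}_\ell\in\ell_2(\mathbb{Z}^*,\mathbb{R})$), the definition~\eqref{multilinearform} of $\mathcal{B}_{f_k}$ gives
\[
\mathcal{I}_n(f_k,g_k)=k!\sum_{(\ell_1,\dots,\ell_k)\in(\mathbb{Z}^*)^k}\beta^{(k)}_{\ell_1,\dots,\ell_k}\;\mathcal{B}_{f_k}\big(w^{(n)}_{\ell_1},\dots,w^{(n)}_{\ell_k}\big),
\]
all rearrangements being legitimate once the final estimate exhibits absolute convergence (alternatively one works with finite truncations throughout).

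Next I would sum over $\ell_1,\dots,\ell_{k-1}$ with $\ell_k$ fixed. Using $|\mathcal{B}_{f_k}(v_1,\dots,v_k)|\le\|\mathcal{B}_{f_k}\|\prod_s\|v_s\|_2$ and the triangle inequality, the partial sum is at most $\|\mathcal{B}_{f_k}\|\,\|w^{(n)}_{\ell_k}\|_2\sum_{\ell_1,\dots,\ell_{k-1}}\big|\beta^{(k)}_{\ell_1,\dots,\ell_{k-1},\ell_k}\big|\prod_{s=1}^{k-1}\|w^{(n)}_{\ell_s}\|_2$. To this inner sum I apply Cauchy--Schwarz after writing $\|w^{(n)}_{\ell_s}\|_2=\sigma_{\ell_s}\cdot\big(\|w^{(n)}_{\ell_s}\|_2/\sigma_{\ell_s}\big)$: the first factor becomes $\gamma(\ell_k):=\big(\sum_{\ell_1,\dots,\ell_{k-1}}|\beta^{(k)}_{\ell_1,\dots,\ell_{k-1},\ell_k}|^2\sigma_{\ell_1}^2\cdots\sigma_{\ell_{k-1}}^2\big)^{1/2}$ and the second factorises as $\big(\sum_{\ell\in\mathbb{Z}^*}\|w^{(n)}_\ell\|_2^2/\sigma_\ell^2\big)^{(k-1)/2}$. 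Since $\|w^{(n)}_\ell\|_2^2/\sigma_\ell^2=\sigma_\ell^2\|T^ne_{|\ell|}\|^2$, summing over $\ell\in\mathbb{Z}^*$ (which is twice the sum over $j\ge1$, as $\sigma_{-j}=\sigma_j$ and $\|T^n\mathfrak{e}_{-j}\|=\|T^ne_j\|$) and using Lemma~\ref{seriemajoree} bounds it by $\|E\|_2^2$, so $|\mathcal{I}_n(f_k,g_k)|\le k!\,\|\mathcal{B}_{f_k}\|\,\|E\|_2^{\,k-1}\sum_{\ell_k\in\mathbb{Z}^*}\|w^{(n)}_{\ell_k}\|_2\,\gamma(\ell_k)$. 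A last Cauchy--Schwarz in $\ell_k$ splits the remaining sum as $\big(\sum_{\ell_k}\|w^{(n)}_{\ell_k}\|_2^2\big)^{1/2}\big(\sum_{\ell_k}\gamma(\ell_k)^2\big)^{1/2}$; the second factor equals $\big(\sum_{(\ell_1,\dots,\ell_k)}|\beta^{(k)}_{\ell_1,\dots,\ell_k}|^2\sigma_{\ell_1}^2\cdots\sigma_{\ell_{k-1}}^2\big)^{1/2}\le\|\mathcal{B}_{g_k}\|\,\|E\|_2^{\,k-1}$ by Corollary~\ref{sommeconvergente} applied to $g_k$, while the first is $\big(\sum_{\ell\in\mathbb{Z}^*}\sigma_\ell^4\|T^ne_{|\ell|}\|^2\big)^{1/2}=\big(2\sum_{j\ge1}\sigma_j^4\|T^ne_j\|^2\big)^{1/2}$, which Corollary~\ref{ecriture2} bounds by $\tfrac{1}{\sqrt2}\,\tfrac{C(E)\pi^\alpha}{n^\alpha}\|E\|_2$. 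Multiplying the three contributions gives $|\mathcal{I}_n(f_k,g_k)|\le\tfrac{1}{\sqrt2}\,k!\,\tfrac{C(E)\pi^\alpha}{n^\alpha}\,\|E\|_2^{\,2k-1}\,\|\mathcal{B}_{f_k}\|\,\|\mathcal{B}_{g_k}\|$, which is even slightly stronger than the asserted inequality.

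The main obstacle is not any single estimate but the combinatorial bookkeeping of weights: there are exactly three ``input'' inequalities available --- Corollary~\ref{ecriture2} (the only one carrying $n^{-\alpha}$, hence usable on precisely one slot), Lemma~\ref{seriemajoree} (for the other $k-1$ slots), and Corollary~\ref{sommeconvergente} (for the $\alpha$- and $\beta$-coefficients) --- and the two nested Cauchy--Schwarz steps must distribute the powers of $\sigma_{\ell_s}$ so that each resulting series is exactly one of these three. Checking that $w^{(n)}_\ell\in\ell_2$, that the collapse of the $\mathfrak{S}_k$-sum is valid, and that all interchanges of summation are absolutely convergent is routine but should be written out.
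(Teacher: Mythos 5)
Your proposal is correct and follows essentially the same route as the paper: expand $f_k,g_k$ in the symmetric form \eqref{developpement2}, apply Proposition \ref{correlationpoly}, collapse the $\mathfrak{S}_k$-sum by symmetry of the $\beta$-coefficients to get the factor $k!$, bound the inner sum by $\Vert \mathcal{B}_{f_k}\Vert$ evaluated on the vectors $(\mathfrak{Re}\langle\mathfrak{e}_i,T^n\mathfrak{e}_\ell\rangle)_i$, and then split the weights by Cauchy--Schwarz so that Corollary \ref{ecriture2} handles one slot, Lemma \ref{seriemajoree} the other $k-1$, and Corollary \ref{sommeconvergente} the coefficient side. Your two nested Cauchy--Schwarz steps reproduce exactly the paper's single three-factor Cauchy--Schwarz, and your explicit tracking of the $\mathbb{Z}^*$-versus-$\mathbb{N}$ factor of $2$ (yielding the extra $1/\sqrt{2}$) is a minor sharpening the paper leaves implicit.
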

\begin{proof}
 As usual, we write the functions $f_k$ and $g_k$ as in \eqref{developpement2} (with coefficients $\beta_{j_1,\dots, j_k}^{(k)}$ for $g_k$) and we have that the correlation $\mathcal{I}_n(f_k,g_k)$ is equal to
\begin{align*}
\sum_{\substack{(i_1,\dots, i_k)\in (\mathbb{Z}^*)^k\\ (j_1\,\dots, j_k)\in (\mathbb{Z}^*)^k}}&\alpha_{i_1,\dots, i_k}^{(k)}\,\beta_{j_1,\dots, j_k}^{(k)}\\
&\times \int_{\mathcal{H}}:\mathfrak{Re}\langle \mathfrak{e}_{i_1},\cdot\rangle\dots\mathfrak{Re}\langle \mathfrak{e}_{i_k},\cdot\rangle :(T^nx):\mathfrak{Re}\langle \mathfrak{e}_{j_1},\cdot\rangle\dots\mathfrak{Re}\langle \mathfrak{e}_{j_k},\cdot\rangle:(x)\,dm(x).
\end{align*}
The integrals above have been computed in Proposition \ref{correlationpoly}:
\begin{align*}
\mathcal{I}_n(f_k,g_k)=\sum_{\tau\in \mathfrak{S}_k}\sum_{\substack{(i_1,\dots, i_k)\in (\mathbb{Z}^*)^k\\ (j_1\,\dots, j_k)\in (\mathbb{Z}^*)^k}}\alpha_{i_1,\dots, i_k}^{(k)}\,\beta_{j_1,\dots, j_k}^{(k)}\,\sigma_{j_1}^2\dots\sigma_{j_k}^2\, \mathfrak{Re}\langle \mathfrak{e}_{i_1},T^n \mathfrak{e}_{j_{\tau(1)}}\rangle\dots \mathfrak{Re}\langle \mathfrak{e}_{i_k},T^n \mathfrak{e}_{j_{\tau(k)}}\rangle.
\end{align*}
Since the sequence $\big(\beta_{j_1,\dots,j_k}^{(k)}\big)_{(j_1,\dots,j_k)\in (\mathbb{Z}^*)^k}$ is symmetric, we obtain that $\mathcal{I}_n(f_k,g_k)$ is equal to
\begin{align*}
k!&\sum_{\substack{(i_1,\dots, i_k)\in (\mathbb{Z}^*)^k\\ (j_1\,\dots,j_k)\in (\mathbb{Z}^*)^k}}\alpha_{i_1,\dots, i_k}^{(k)}\,\beta_{j_1,\dots, j_k}^{(k)}\,\sigma_{j_1}^2\dots\sigma_{j_k}^2\, \mathfrak{Re}\langle \mathfrak{e}_{i_1},T^n \mathfrak{e}_{j_{1}}\rangle\dots \mathfrak{Re}\langle \mathfrak{e}_{i_k},T^n \mathfrak{e}_{j_{k}}\rangle\\
&=k!\sum_{(j_1,\dots, j_k)\in (\mathbb{Z}^*)^k}\beta_{j_1,\dots,j_k}^{(k)}\,\sigma_{j_1}^2\dots\sigma_{j_k}^2\sum_{(i_1,\dots, i_k)\in (\mathbb{Z}^*)^k}\alpha_{i_1,\dots, i_k}^{(k)}\mathfrak{Re}\langle \mathfrak{e}_{i_1},T^n \mathfrak{e}_{j_{1}}\rangle\dots \mathfrak{Re}\langle \mathfrak{e}_{i_k},T^n \mathfrak{e}_{j_{k}}\rangle.
\end{align*}
We now use the triangle inequality and the boundedness of the $k$-linear form $\mathcal{B}_{f_k}$:
\begin{align*}
&\big\vert \mathcal{I}_n(f_k ,g_k)\big\vert \\
&\le k!\sum_{(j_1,\dots, j_k)\in (\mathbb{Z}^*)^k}\big\vert\beta_{j_1,\dots,j_k}^{(k)}\big\vert\,\sigma_{j_1}^2\dots\sigma_{j_k}^2\Big\vert\sum_{(i_1,\dots, i_k)\in (\mathbb{Z}^*)^k}\alpha_{i_1,\dots, i_k}^{(k)}\mathfrak{Re}\langle \mathfrak{e}_{i_1},T^n \mathfrak{e}_{j_{1}}\rangle\dots \mathfrak{Re}\langle \mathfrak{e}_{i_k},T^n \mathfrak{e}_{j_{k}}\rangle\Big\vert\\
&\le k!\,\vert\vert \mathcal{B}_{f_k}\vert\vert\sum_{(j_1,\dots, j_k)\in (\mathbb{Z}^*)^k}\big\vert\beta_{j_1,\dots, j_k}^{(k)}\big\vert\,\sigma_{j_1}^2\dots\sigma_{j_k}^2\,\vert\vert T^n \mathfrak{e}_{j_1}\vert\vert\dots\vert\vert T^n \mathfrak{e}_{j_k}\vert\vert\\
&=k!\,\vert\vert \mathcal{B}_{f_k}\vert\vert\sum_{(j_1,\dots, j_k)\in (\mathbb{Z}^*)^k}\big(\big\vert\beta_{j_1,\dots, j_k}^{(k)}\big\vert\,\sigma_{j_1}\dots\sigma_{j_{k-1}}\big)\big(\sigma_{j_1}\vert\vert T^n \mathfrak{e}_{j_1}\vert\vert\dots\sigma_{j_{k-1}}\vert\vert T^n \mathfrak{e}_{j_{k-1}}\vert\vert\sigma_{j_k}^2\vert\vert T^n \mathfrak{e}_{j_k}\vert\vert\big).
\end{align*}
Then, the Cauchy-Schwarz inequality gives us that $\big\vert \mathcal{I}_n(f_k,g_k)\big\vert$ is less than
\begin{align*}
k!\, \vert\vert \mathcal{B}_{f_k}\vert\vert\,  \Big(\sum_{(j_1,\dots, j_k)\in (\mathbb{Z}^*)^k}\big\vert &\beta_{j_1,\dots,j_k}^{(k)}\big\vert^2\,\sigma_{j_1}^2\dots\sigma_{j_{k-1}}^2\Big)^{1/2}\\
&\bigg(\sum_{j\in \mathbb{Z}^*}\sigma_j^2\,\vert\vert T^n \mathfrak{e}_j\vert\vert^2\bigg)^{(k-1)/2}\bigg(\sum_{j\in \mathbb{Z}^*}\sigma_j^4\,\vert\vert T^n \mathfrak{e}_j\vert\vert^2\bigg)^{1/2}.
\end{align*}
We conclude the proof by using Corollary \ref{ecriture2}, and the estimates \eqref{finitesum1} and \eqref{finite2}:
\begin{align*}
\big\vert \mathcal{I}_n(f_k,g_k)\big\vert \le k!\,\frac{C(E)\,\pi^{\alpha}}{n^{\alpha}}\,\vert\vert E \vert\vert_2^{2k-1}\,\vert\vert \mathcal{B}_{f_k}\vert\vert\,\vert\vert \mathcal{B}_{g_k}\vert\vert.
\end{align*}
\end{proof}
\noindent With this result on the rate of mixing in each space $:\mathcal{G}^k:$, we can prove a general result on the rate of mixing for regular functions in $L^2(\mathcal{H},\mathcal{B},m)$ by considering the Wiener chaos decomposition \eqref{expansion} of our functions.
\subsection{The rate of mixing theorem}
It is now time to define the spaces of functions which will be used in our main theorem. We denote by $\mathcal{X}$ the space of real-valued functions $f$ in $L^2(\mathcal{H},\mathcal{B},m)$ such that the series
$\sum_{k\ge 0}\vert\vert \mathcal{B}_{f_k}\vert\vert^2$
is convergent, where $\mathcal{B}_{f_k}$ is the $k$-linear form \eqref{multilinearform} associated to the component $f_k$ of $f$ in the Wiener chaos decomposition \eqref{expansion} of $f$: $f=\sum_{k\ge 0}f_k$.
 We also introduce the subspace $\mathcal{Y}$ of $\mathcal{X}$ of real-valued functions $g$ such that all the multilinear forms $\mathcal{B}_{g_k}$ are bounded and such that the quantity
$\sup_{k\ge 0}k!\,\vert\vert \mathcal{B}_{g_k}\vert\vert$
is finite. We then endow these two spaces with the norms
\begin{equation*}
\vert\vert f \vert\vert_\mathcal{X}:=\bigg(\vert\vert f \vert\vert_{L^2(m)}^2+\sum_{k=0}^{+\infty}\vert\vert \mathcal{B}_{f_k}\vert\vert^2\bigg)^{1/2}\ \ \ \ \mathrm{and}\ \ \ \ \vert\vert g \vert\vert_{\mathcal{Y}}:=\vert\vert g \vert\vert_{L^2(m)} +\sup_{k\ge 0} \big(k!\, \vert\vert \mathcal{B}_{g_k}\vert\vert\big).
\end{equation*}

\begin{Prop}\label{banach}
\noindent $(i)$ The map $\vert\vert \cdot \vert\vert_\mathcal{X}$ defines a norm on the space $\mathcal{X}$ and $(\mathcal{X},\vert\vert\cdot\vert\vert_\mathcal{X})$ is a Banach space of functions which is contained in $L^2_{\mathbb{R}}(\mathcal{H},\mathcal{B},m)$.\\
\noindent $(ii)$ If $f$ is a real-valued infinitely differentiable function in $L^2(\mathcal{H},\mathcal{B},m)$ such that the series\newline $\sum_{k\ge 0}\frac{\vert\vert \int_\mathcal{H}D_kf(x)\,dm(x) \vert\vert^2}{(k!)^2}$ is convergent, then $f$ belongs to $\mathcal{X}$ and the norm of $f$ can also be written as
\begin{equation*}
\vert\vert f \vert\vert_\mathcal{X}=\bigg(\vert\vert f \vert\vert_{L^2(m)}^2+\sum_{k=0}^{+\infty}\frac{\big\vert\big\vert  \int_\mathcal{H}D^kf(x)\,dm(x)\big\vert\big\vert^2}{(k!)^2}\bigg)^{1/2}\cdot
\end{equation*}
$(iii)$ The map $\vert\vert \cdot \vert\vert_{\mathcal{Y}}$ defines a norm on the space $\mathcal{Y}$ and $(\mathcal{Y},\vert\vert \cdot \vert\vert_{\mathcal{Y}})$ is a Banach space. Furthermore, every real-valued infinitely differentiable function $g$ in $L^2(\mathcal{H},\mathcal{B},m)$ such that 
\begin{equation}\label{conditionintegraleY}
\sup_{k\ge 0}\int_\mathcal{H}\vert\vert D^k g(x)\vert\vert\,dm(x)<+\infty
\end{equation}
belongs to the space $\mathcal{Y}$.
\end{Prop}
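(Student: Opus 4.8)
The plan is to dispatch the three parts in turn, leaning throughout on two elementary facts. First, for each fixed $k$ the chaos projection $\mathcal{P}_{:\mathcal{G}^k:}$ is a linear contraction of $L^2_\mathbb{R}(\mathcal{H},\mathcal{B},m)$, being an orthogonal projection, and by \eqref{expressioncoefficient}--\eqref{coefficient2} the coefficients $\alpha_{i_1,\dots,i_k}^{(k)}$ of a function of $:\mathcal{G}^k:$ are fixed scalar multiples of scalar products against fixed elements of $:\mathcal{G}^k:$, hence depend linearly and continuously on $f_k$ in $L^2(m)$; in particular $f\mapsto f_k$ and $f_k\mapsto\mathcal{B}_{f_k}$ are linear, so that $(f+g)_k=f_k+g_k$ and $\mathcal{B}_{(f+g)_k}=\mathcal{B}_{f_k}+\mathcal{B}_{g_k}$. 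Second, for fixed $k$ the space of bounded $k$-linear forms on $\ell_2(\mathbb{Z}^*,\mathbb{R})$ equipped with the norm \eqref{normemultilineaire} is complete.

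For $(i)$ I would first record that $\mathcal{X}\subset L^2_\mathbb{R}(\mathcal{H},\mathcal{B},m)$ by definition and that $\vert\vert f\vert\vert_\mathcal{X}<+\infty$ for every $f\in\mathcal{X}$, then check the norm axioms: $\vert\vert\cdot\vert\vert_\mathcal{X}$ is positive-definite since $\vert\vert f\vert\vert_\mathcal{X}\ge\vert\vert f\vert\vert_{L^2(m)}$, absolutely homogeneous by linearity of $f\mapsto\mathcal{B}_{f_k}$, and it satisfies the triangle inequality because $\vert\vert\mathcal{B}_{(f+g)_k}\vert\vert\le\vert\vert\mathcal{B}_{f_k}\vert\vert+\vert\vert\mathcal{B}_{g_k}\vert\vert$ (which moreover shows $\mathcal{X}$ to be a linear subspace), combined with Minkowski's inequality for the $\ell^2$-norm applied to $(\vert\vert\mathcal{B}_{f_k}\vert\vert)_{k\ge 0}$ and the triangle inequality in $L^2(m)$. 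For completeness I would take a $\vert\vert\cdot\vert\vert_\mathcal{X}$-Cauchy sequence $(f^{(p)})_p$: since $\vert\vert\cdot\vert\vert_\mathcal{X}\ge\vert\vert\cdot\vert\vert_{L^2(m)}$ it converges in $L^2_\mathbb{R}(\mathcal{H},\mathcal{B},m)$ to some $f$, and applying the contraction $\mathcal{P}_{:\mathcal{G}^k:}$ gives $f^{(p)}_k\to f_k:=\mathcal{P}_{:\mathcal{G}^k:}f$ in $L^2(m)$ for each $k$; since $\vert\vert f\vert\vert_\mathcal{X}\ge\vert\vert\mathcal{B}_{f_k}\vert\vert$ for every $k$, the sequence $(\mathcal{B}_{f^{(p)}_k})_p$ (each term being a bounded form, as $f^{(p)}\in\mathcal{X}$) is Cauchy for \eqref{normemultilineaire} and converges in that norm to a bounded $k$-linear form; evaluating this limit on the canonical basis of $\ell_2(\mathbb{Z}^*,\mathbb{R})$ and invoking the $L^2(m)$-continuity of the coefficient functionals, its entries are exactly the coefficients of $f_k$, whence $\mathcal{B}_{f_k}$ is well defined and bounded with $\mathcal{B}_{f^{(p)}_k}\to\mathcal{B}_{f_k}$. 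A standard argument then finishes: fixing $\varepsilon>0$, then $p$ large, and letting $q\to+\infty$ in $\vert\vert f^{(p)}-f^{(q)}\vert\vert_\mathcal{X}\le\varepsilon$, the $L^2(m)$-convergence handles the first term, while for every $K$ one has $\sum_{k=0}^{K}\vert\vert\mathcal{B}_{(f^{(p)}-f)_k}\vert\vert^2=\lim_q\sum_{k=0}^{K}\vert\vert\mathcal{B}_{(f^{(p)}-f^{(q)})_k}\vert\vert^2\le\varepsilon^2$; letting $K\to+\infty$ gives $f^{(p)}-f\in\mathcal{X}$, hence $f\in\mathcal{X}$, and $\vert\vert f^{(p)}-f\vert\vert_\mathcal{X}\to 0$.

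Part $(ii)$ should fall out immediately from Theorem \ref{bounded}: for an infinitely differentiable $f$ to which that theorem applies one has $\vert\vert\mathcal{B}_{f_k}\vert\vert=\frac{1}{k!}\vert\vert\int_\mathcal{H}D^kf(x)\,dm(x)\vert\vert$, so $\sum_{k\ge 0}\vert\vert\mathcal{B}_{f_k}\vert\vert^2$ coincides with $\sum_{k\ge 0}\frac{1}{(k!)^2}\vert\vert\int_\mathcal{H}D^kf(x)\,dm(x)\vert\vert^2$ and is finite by hypothesis; hence $f\in\mathcal{X}$, and substituting this identity into the definition of $\vert\vert\cdot\vert\vert_\mathcal{X}$ produces the stated formula. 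For $(iii)$ I would run the argument of $(i)$ verbatim, with the term $(\sum_k\vert\vert\mathcal{B}_{f_k}\vert\vert^2)^{1/2}$ in the norm replaced by the subadditive, absolutely homogeneous functional $\sup_{k\ge 0}(k!\,\vert\vert\mathcal{B}_{f_k}\vert\vert)$; the inclusion $\mathcal{Y}\subset\mathcal{X}$ then follows from $\vert\vert\mathcal{B}_{g_k}\vert\vert\le(k!)^{-1}\sup_{j\ge 0}(j!\,\vert\vert\mathcal{B}_{g_j}\vert\vert)$ together with $\sum_{k\ge 0}(k!)^{-2}<+\infty$, and completeness is obtained exactly as above, now tracking $\sup_k(k!\,\vert\vert\mathcal{B}_{(f^{(p)}-f^{(q)})_k}\vert\vert)$ in place of the $\ell^2$-sum and using, for each fixed $k$, that $k!\,\vert\vert\mathcal{B}_{(f^{(p)}-f)_k}\vert\vert=\lim_q k!\,\vert\vert\mathcal{B}_{(f^{(p)}-f^{(q)})_k}\vert\vert\le\varepsilon$ for $p$ large, then taking the supremum over $k$. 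Finally, if $g\in L^2_\mathbb{R}(\mathcal{H},\mathcal{B},m)$ is infinitely differentiable with $\sup_{k\ge 0}\int_\mathcal{H}\vert\vert D^kg(x)\vert\vert\,dm(x)<+\infty$, then $g$ satisfies \eqref{intfinite}, so Theorem \ref{bounded} gives $k!\,\vert\vert\mathcal{B}_{g_k}\vert\vert=\vert\vert\int_\mathcal{H}D^kg(x)\,dm(x)\vert\vert\le\int_\mathcal{H}\vert\vert D^kg(x)\vert\vert\,dm(x)$, which is bounded uniformly in $k$, so $g\in\mathcal{Y}$.

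The step I expect to be the main obstacle is the identification, in both completeness proofs, of the limit (in the norm \eqref{normemultilineaire}) of the forms $\mathcal{B}_{f^{(p)}_k}$ with the form $\mathcal{B}_{f_k}$ attached to the $L^2$-limit $f$: this needs, simultaneously, the $L^2$-continuity of the chaos projection, the $L^2$-continuity of the coefficient functionals \eqref{expressioncoefficient}, and the fact that a bounded $k$-linear form on $\ell_2(\mathbb{Z}^*,\mathbb{R})$ is determined by its values on the canonical basis vectors. Everything else reduces to Minkowski's inequality, elementary manipulations with suprema, and Theorem \ref{bounded}.
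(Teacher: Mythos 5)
Your proof is correct and follows essentially the same route as the paper: reduce completeness to the completeness of $L^2(m)$ and of the space of bounded $k$-linear forms, identify the limit forms with those of the $L^2$-limit, and derive $(ii)$ and the second half of $(iii)$ directly from Theorem \ref{bounded}. The only difference is that you spell out two steps the paper treats as immediate — the identification $\mathcal{B}_k=\mathcal{B}_{\tilde f_k}$ via the coefficient functionals, and the interchange of the limit in $m$ with the sum over $k$ via finite truncation — which is a welcome but not substantively different elaboration.
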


\begin{proof}
$(i)$ It is straightforward to check that $\vert\vert \cdot \vert\vert_{\mathcal{X}}$ defines a norm on the space $\mathcal{X}$. We now prove that $(\mathcal{X},\vert\vert \cdot \vert\vert_\mathcal{X})$ is a Banach space. Let $(f_n)_{n\in \mathbb{N}}$ be a Cauchy sequence in the space $(\mathcal{X},\vert\vert \cdot \vert\vert_\mathcal{X})$ where the Wiener chaos decomposition of each function $f_n$ is written as $f_n=\sum_{k\ge 0}f_{n,k}$. We know that for all $\epsilon >0$, there is an integer $n_\epsilon\ge 1$ such that for any $n,m\ge n_\epsilon$, $\big\vert\big\vert f_n-f_m\big\vert\big\vert_{\mathcal{X}}\le \epsilon$ and then that
\begin{equation}\label{banach1} 
\big\vert\big\vert f_n-f_m\big\vert\big\vert_{L^2(m)}\le \epsilon\ \ \ \mathrm{and}\ \ \ \sum_{k\ge 0}\big\vert\big\vert \mathcal{B}_{f_{n,k}}-\mathcal{B}_{f_{m,k}}\big\vert\big\vert^2\le \epsilon^2\ \ \ \mathrm{for\ any\ } n, m \ge n_\epsilon.
\end{equation}
This shows that there is a function $\tilde{f}$ in $L^2_{\mathbb{R}}(\mathcal{H},\mathcal{B},m)$, with Wiener chaos decomposition $\tilde{f}=\sum_{k\ge 0}\tilde{f}_k$, such that $(f_n)_{n\in \mathbb{N}}$ is convergent to $\tilde{f}$ in $(L^2(\mathcal{H},\mathcal{B},m),\vert\vert \cdot \vert\vert_{L^2(m)})$. Furthermore, the second inequality of \eqref{banach1} gives us $\big\vert\big\vert \mathcal{B}_{f_{n,k}}-\mathcal{B}_{f_{m,k}}\big\vert\big\vert\le \epsilon$ for any $n,m\ge n_\epsilon$ and for all $k\ge 0$. Since the space of bounded $k$-linear forms is a Banach space for the norm $\vert\vert \cdot \vert\vert$, it follows that there exists a bounded $k$-linear form $\mathcal{B}_k$ such that $\lim_{n\to +\infty}\big\vert\big\vert \mathcal{B}_{f_{n,k}}-\mathcal{B}_k\big\vert\big\vert=0$. If we expand $\mathcal{B}_k$ as in \eqref{multilinearform}, we easily see that $\mathcal{B}_k=\mathcal{B}_{\tilde{f}_k}$ for any nonnegative integer $k$. We now see that for any $n\ge n_\epsilon$, we have
\begin{align*}
\big\vert\big\vert \tilde{f}-f_n\big\vert\big\vert_\mathcal{X}^2 &=\big\vert\big\vert \tilde{f}-f_n\big\vert\big\vert_{L^2(m)}^2+\sum_{k=0}^{+\infty}\big\vert\big\vert \mathcal{B}_{k}-\mathcal{B}_{f_{n,k}}\big\vert\big\vert^2\\
 &\le\lim_{m\to +\infty}\big\vert\big\vert f_m-f_n\big\vert\big\vert_{L^2(m)}^2+\lim_{m\to +\infty}\sum_{k=0}^{+\infty}\big\vert\big\vert \mathcal{B}_{f_{m,k}}-\mathcal{B}_{f_{n,k}}\big\vert\big\vert^2
\le 2\,\epsilon^2.
\end{align*}
This proves that the function $\tilde{f}$ belongs to $\mathcal{X}$ and that the sequence $(f_n)_{n\in \mathbb{N}}$ is convergent to $\tilde{f}$ in the space $(\mathcal{X},\vert\vert \cdot \vert\vert_{\mathcal{X}})$, that is $(\mathcal{X},\vert\vert\cdot\vert\vert_\mathcal{X})$ is a Banach space.\\
\noindent $(ii)$ is an immediate consequence of Theorem \ref{bounded}.\\
\noindent $(iii)$ In the same way as in the proof of $(i)$, we prove that any Cauchy sequence in $(\mathcal{Y},\vert\vert \cdot \vert\vert_\mathcal{Y})$ is convergent in this space, that is $(\mathcal{Y},\vert\vert \cdot \vert\vert_\mathcal{Y})$ is a Banach space. Furthermore, the fact that $\mathcal{Y}$ contains any real-valued infinitely differentiable function in $L^2(\mathcal{H},\mathcal{B},m)$ satisfying condition \eqref{conditionintegraleY} is a consequence of Theorem \ref{bounded}.
\end{proof}

Before stating our result about the rate of mixing, we point out some classes of functions which belong to the spaces $\mathcal{X}$ or $\mathcal{Y}$. Recall that an entire function $\phi : \mathbb{C}\longrightarrow \mathbb{C}$ is said to be of \textit{exponential type} if there exist constants $M$ and $\tau$ such that for every $r>0$ and $\theta$ in $\mathbb{R}$, $\vert \phi(re^{i\theta})\vert \le Me^{\tau r}$. Letting $\kappa$ stand for the infimum of all such $\tau$, we say that the function $\phi$ is of \textit{exponential type} $\kappa$ (see \cite{B} for more details on these functions).

\begin{Prop}\label{propositionexemples}
\noindent $(i)$ A polynomial in the elements $\mathfrak{Re}\langle \mathfrak{e}_k,\cdot\rangle$ belongs to $\mathcal{Y}$.\\
\noindent $(ii)$ A square-integrable function $f$ of the form
\begin{equation*}
f=\phi\big(\mathfrak{Re}\langle \mathfrak{e}_{-N},\cdot\rangle,\dots,\mathfrak{Re}\langle \mathfrak{e}_N,\cdot\rangle\big),
\end{equation*}
where $\phi : \mathbb{R}^{2N} \longrightarrow \mathbb{R}$ is a real-valued measurable function, belongs to $\mathcal{X}$.\\
\noindent $(iii)$ Let $\phi : \mathbb{C} \longrightarrow \mathbb{C}$ be an entire function of exponential type $\kappa$ such that $\kappa<(2\vert\vert E \vert\vert_2^2)^{-1}$. Then the function $f=\mathfrak{Re}\big(\phi\circ\vert\vert\cdot \vert\vert^2\big)$ belongs to $\mathcal{X}$.
\end{Prop}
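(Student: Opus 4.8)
The plan is to treat the three membership statements separately, reducing $(i)$ and $(iii)$ to Proposition \ref{banach} and handling $(ii)$ by a finite-dimensionality argument. For part $(i)$, observe that a polynomial $P$ in the variables $\mathfrak{Re}\langle\mathfrak{e}_k,\cdot\rangle$ is a real-valued infinitely differentiable function on $\mathcal{H}$, lies in $L^2(\mathcal{H},\mathcal{B},m)$, and has a finite degree $d$; hence $D^kP\equiv 0$ for $k>d$, while for $0\le k\le d$ the norm $\|D^kP(x)\|$ is dominated by a polynomial in $\|x\|$, so each $\int_\mathcal{H}\|D^kP(x)\|\,dm(x)$ is finite (the measure $m$ having moments of all orders, Proposition \ref{moments}) and $\sup_{k\ge 0}\int_\mathcal{H}\|D^kP(x)\|\,dm(x)<+\infty$. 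Proposition \ref{banach}$(iii)$ then gives $P\in\mathcal{Y}$; this step is routine.

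For part $(ii)$, I would first discard any index $j$ with $|j|\le N$ and $\sigma_j=0$ (for which $\mathfrak{Re}\langle\mathfrak{e}_j,\cdot\rangle=0$ $m$-almost everywhere), and let $\sigma_\ast^2$ be the smallest of the remaining, positive, variances. The key point is that every component $f_k=\mathcal{P}_{:\mathcal{G}^k:}f$ is carried by the $2N$ variables $\mathfrak{Re}\langle\mathfrak{e}_j,\cdot\rangle$, $|j|\le N$: since these are independent of $\mathfrak{Re}\langle\mathfrak{e}_j,\cdot\rangle$ for $|j|>N$ (Proposition \ref{basis}) and a Wick transform of positive degree is centred, formula \eqref{expressioncoefficient} together with the factorisation in Fact \ref{mutually} forces $\alpha^{(k)}_{i_1,\dots,i_k}=0$ in \eqref{developpement2} unless every $i_\ell\in\{-N,\dots,N\}\setminus\{0\}$. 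Consequently the sum defining $\mathcal{B}_{f_k}$ in \eqref{multilinearform} has at most $(2N)^k$ terms, so $\mathcal{B}_{f_k}$ is bounded and, by Cauchy--Schwarz, $\|\mathcal{B}_{f_k}\|^2\le\sum_{(i_1,\dots,i_k)}|\alpha^{(k)}_{i_1,\dots,i_k}|^2$. I would then use Fact \ref{faitcombinatoire}, formula \eqref{coefficient2} and Proposition \ref{variance} to re-express this last sum in terms of the coefficients $a^{(k)}$ of the decomposition \eqref{developpement}, obtaining $\sum_{(i_1,\dots,i_k)}|\alpha^{(k)}_{i_1,\dots,i_k}|^2\le\big(k!\,\sigma_\ast^{2k}\big)^{-1}\|f_k\|_{L^2(m)}^2$; since $\big(k!\,\sigma_\ast^{2k}\big)^{-1}$ is bounded in $k$ by some constant $C_N$, summing over $k$ yields $\sum_{k\ge 0}\|\mathcal{B}_{f_k}\|^2\le C_N\|f\|_{L^2(m)}^2<+\infty$, i.e. $f\in\mathcal{X}$.

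For part $(iii)$, I would invoke Proposition \ref{banach}$(ii)$, so it suffices to bound $\big\|\int_\mathcal{H}D^kf(x)\,dm(x)\big\|$. Writing $\phi(z)=\sum_{j\ge 0}c_jz^j$, the exponential-type hypothesis furnishes, for any fixed $\kappa'$ with $\kappa<\kappa'<(2\|E\|_2^2)^{-1}$, a constant $C$ with $|c_j|\le C\kappa'^j/j!$, hence $|\phi^{(m)}(w)|\le C\kappa'^m e^{\kappa'|w|}$ for all $m\ge 0$. One first checks $f\in L^2$: $|f(x)|^2\le C^2e^{2\kappa'\|x\|^2}$, and $\int_\mathcal{H}e^{t\|x\|^2}\,dm(x)=\prod_{j\ge 1}(1-2t\sigma_j^2)^{-1}$ (by the independence and the covariance identities, Proposition \ref{basis} and Lemma \ref{rotations}) is finite whenever $t<(2\sup_j\sigma_j^2)^{-1}$, which holds for $t=2\kappa'$ because $\sup_j\sigma_j^2\le\sum_j\sigma_j^2=\tfrac12\|E\|_2^2$ (Corollary \ref{orthogonality}) and $2\kappa'<\|E\|_2^{-2}$. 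Next, $f=\mathfrak{Re}(\phi\circ\|\cdot\|^2)$ is smooth, and with $N(x)=\|x\|^2$ one has $DN(x)(h)=2\mathfrak{Re}\langle x,h\rangle$, $D^2N(x)(h_1,h_2)=2\mathfrak{Re}\langle h_1,h_2\rangle$ and $D^jN\equiv 0$ for $j\ge 3$, so the chain rule for higher derivatives (Fa\`{a} di Bruno) gives $\|D^kf(x)\|\le Ck!\,e^{\kappa'\|x\|^2}\sum_{p+2q=k}\kappa'^{p+q}(2\|x\|)^p/(p!\,q!)$. Integrating against $m$ with the elementary bounds $(2\|x\|)^p/p!\le A^{-p}e^{2A\|x\|}$ and $2A\|x\|\le\delta\|x\|^2+A^2/\delta$ (choosing $\delta$ small enough that $\kappa'+\delta<\|E\|_2^{-2}$, and $A>\kappa'$) and summing the resulting geometric-type series in $p$ and $q$, one arrives at $\big\|\int_\mathcal{H}D^kf(x)\,dm(x)\big\|\le C'k!\,(\kappa'/A)^k$. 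Since $\kappa'/A<1$, the series $\sum_{k\ge 0}(k!)^{-2}\big\|\int_\mathcal{H}D^kf\,dm\big\|^2$ converges, and Proposition \ref{banach}$(ii)$ yields $f\in\mathcal{X}$.

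The main obstacle is the bookkeeping in $(iii)$: the Fa\`{a} di Bruno sum only collapses to a genuinely geometric bound $C'k!\,(\kappa'/A)^k$ if the Gaussian exponential moment $\int_\mathcal{H}e^{t\|x\|^2}\,dm$ remains finite for $t$ up to $\|E\|_2^{-2}$, which one controls through the inequality $\sup_j\sigma_j^2\le\tfrac12\|E\|_2^2$; the hypothesis $\kappa<(2\|E\|_2^2)^{-1}$ is exactly what keeps the relevant exponents $2\kappa'$ and $\kappa'+\delta$ below this threshold, and arranging the constants so that a factor $(\kappa'/A)^k$ with $\kappa'/A<1$ survives is the only substantial computation. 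In $(ii)$ the sole subtlety is the combinatorial identity relating $\sum|\alpha^{(k)}|^2$ to $\|f_k\|_{L^2(m)}^2$, and $(i)$ is immediate.
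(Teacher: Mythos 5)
Your proof is correct and takes essentially the same route as the paper's: part $(i)$ is reduced to Proposition \ref{banach}, part $(ii)$ bounds $\sum_k\Vert\mathcal{B}_{f_k}\Vert^2$ by a constant times $\Vert f\Vert_{L^2(m)}^2$ via Cauchy--Schwarz and the minimal variance $\tilde\sigma=\min(\sigma_{-N},\dots,\sigma_N)$, and part $(iii)$ estimates the higher derivatives of $\phi\circ\Vert\cdot\Vert^2$ using that $\Vert\cdot\Vert^2$ has vanishing derivatives of order $\ge 3$, the coefficient bound from the exponential type, and the finiteness of the Gaussian exponential moment $\int_\mathcal{H}e^{t\Vert x\Vert^2}\,dm(x)$ (the paper counts the pair-partitions by hand rather than citing Fa\`a di Bruno, but the combinatorics is identical). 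Your explicit check that $2\kappa'\sup_j\sigma_j^2<1$ via $\sup_j\sigma_j^2\le\tfrac12\Vert E\Vert_2^2$ makes precise a convergence point the paper leaves implicit.
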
 

\begin{proof}
Assertion $(i)$ follows immediately from Proposition \ref{banach}. In order to prove $(ii)$, we show that $\vert\vert f \vert\vert_\mathcal{X}$ can be controlled by $\vert\vert f \vert\vert_{L^2(m)}$ in this case. The function $f$ can be written as $f=\sum_{k\ge 0}f_k$ where
$$
f_k=\sum_{(j_1,\dots, j_k)\in (\{-N,\dots, N\}\setminus\{0\})^k}\alpha_{j_1,\dots, j_k}^{(k)}:\mathfrak{Re}\langle \mathfrak{e}_{j_1},\cdot\rangle\dots\mathfrak{Re}\langle \mathfrak{e}_{j_k},\cdot\rangle:
$$
and with
$$
\vert\vert f \vert\vert_{L^2(m)}^2=\sum_{k=0}^{+\infty}\vert\vert f_k \vert\vert_{L^2(m)}^2<+\infty.
$$
We now compute the norm $\vert\vert f \vert\vert_{L^2(m)}$. For every positive integer $k$ we have
\begin{align*}
\notag &\vert\vert f_k \vert\vert_{L^2(m)}^2=\sum_{\substack{(i_1,\dots, i_k)\in (\{-N,\dots, N\}\setminus\{0\})^k\\(j_1,\dots, j_k)\in (\{-N,\dots, N\}\setminus\{0\})^k}}\alpha_{i_1,\dots, i_k}^{(k)}\alpha_{j_1,\dots, j_k}^{(k)}\\ &\qquad \qquad\times\int_{\mathcal{H}}:\mathfrak{Re}\langle \mathfrak{e}_{i_1},\cdot\rangle\dots\mathfrak{Re}\langle \mathfrak{e}_{i_{k}},\cdot\rangle:(x):\mathfrak{Re}\langle \mathfrak{e}_{j_1},\cdot\rangle\dots\mathfrak{Re}\langle \mathfrak{e}_{j_{k}},\cdot\rangle:(x)\,dm(x).
\end{align*}
By using the same method of computation as in Corollary \ref{sommeconvergente}, we obtain
\begin{align*}
\vert\vert f_k\vert\vert_{L^2(m)}^2=k!\sum_{(i_1,\dots, i_k)\in (\{-N,\dots, N\}\setminus\{0\})^k}\big\vert\alpha_{i_1,\dots ,i_k}^{(k)}\big\vert^2\sigma_{i_1}^2\dots \sigma_{i_k}^2.
\end{align*}
and then
\begin{equation}\label{normefexemple}
\vert\vert f\vert\vert_{L^2(m)}^2=\sum_{k=0}^{+\infty}k!\sum_{(i_1,\dots, i_k)\in (\{-N,\dots, N\}\setminus\{0\})^k}\big\vert\alpha_{i_1,\dots ,i_k}^{(k)}\big\vert^2\sigma_{i_1}^2\dots \sigma_{i_k}^2.
\end{equation}
Furthermore, the $k$-linear form $\mathcal{B}_{f_k}$ is defined by
$$
\mathcal{B}_{f_k}\big(x^{(1)},\dots, x^{(k)}\big)=\sum_{(i_1,\dots, i_k)\in (\{-N,\dots, N\}\setminus\{0\})^k}\alpha_{i_1,\dots, i_k}^{(k)}x_{i_1}^{(1)}\dots x_{i_k}^{(k)}
$$
for every vectors $x^{(1)},\dots, x^{(k)}$ in $\ell(\mathbb{Z}^*,\mathbb{R})$ and the Cauchy-Schwarz inequality gives us
$$
\vert\vert\mathcal{B}_{f_k}\vert\vert\le \Bigg(\sum_{(i_1,\dots, i_k)\in (\{-N,\dots, N\}\setminus\{0\})^k}\big\vert\alpha_{i_1,\dots, i_k}^{(k)}\big\vert^2\Bigg)^{1/2}.
$$
We can conclude that
$$
\sum_{k=0}^{+\infty}\vert\vert\mathcal{B}_{f_k}\vert\vert^2\le \sum_{k=0}^{+\infty}\sum_{(i_1,\dots, i_k)\in (\{-N,\dots, N\}\setminus\{0\})^k}\big\vert\alpha_{i_1,\dots, i_k}^{(k)}\big\vert^2.
$$
Let $\tilde{\sigma}=\min(\sigma_{-N},\dots,\sigma_{N})$. Then $\tilde{\sigma}^k\le \sigma_{i_1}\dots \sigma_{i_k}$ for every $k$-tuple $(i_1,\dots, i_k)$ belonging to $(\{-N,\dots, N\}\setminus\{0\})^k$ and the sequence $(k!\tilde{\sigma}^{2k})_{k\ge 0}$ tends to infinity. Then there exists a constant $C_N>0$ such that $k!\tilde{\sigma}^{2k}\ge C_N$ for every positive integer $k$. It follows that 
$$
\sum_{k=0}^{+\infty}\vert\vert\mathcal{B}_{f_k}\vert\vert^2 \le C_N^{-1}\vert\vert f \vert\vert_{L^2(m)}^2.
$$
This proves that $\vert\vert f \vert\vert_\mathcal{X}^2\le (1+C_N^{-1})\vert\vert f \vert\vert_{L^2(m)}^2$ and that $f$ belongs to the space $\mathcal{X}$.\newline
\noindent We now deal with assertion $(iii)$. There exist constants $\kappa<\tau<(2\vert\vert E \vert\vert_2^2)^{-1}$ and $M$ such that $\vert \phi(re^{i\theta})\vert\le Me^{\tau r}$ for any $r$ and $\theta$. In order to prove that $f$ belongs to $L^2(\mathcal{H},\mathcal{B},m)$, it suffices to show that the function $e^{\tau\vert\vert \cdot \vert\vert^2}$ belongs to $L^2(\mathcal{H},\mathcal{B},m)$ (recall that $\vert\vert E \vert\vert_2^2=2\sum_{k\ge 1}\sigma_k^2$). Since $(\vert \langle e_k,\cdot \rangle \vert)_{k\in \mathbb{N}}$ is a sequence of independant complex random variables, we have
\begin{align*}
\int_{\mathcal{H}}e^{2\tau\vert\vert x \vert\vert^2}\,dm(x)=\prod_{k=1}^{+\infty}\bigg(\int_\mathcal{H}e^{2\tau(\mathfrak{Re}\langle e_k,x\rangle)^2}\,dm(x)\bigg)^2 &=\prod_{k=1}^{+\infty}\bigg(\int_{\mathbb{R}}e^{-\big(\frac{1}{2\sigma_k^2}-2\tau\big)t^2}\,\frac{dt}{\sigma_k\sqrt{2\pi}}\bigg)^2\\
&=\prod_{k=1}^{+\infty}\bigg(\frac{1}{1-4\tau\sigma_k^2}\bigg)
\end{align*}
and this infinite product is convergent since the series $\sum_{k\ge 1}\sigma_k^2$ is convergent, which proves that $f$ belongs to $L^2(\mathcal{H},\mathcal{B},m)$. We are now going to prove that the infinitely differentiable function $g=\phi\circ\vert\vert\cdot\vert\vert^2$ is such that the series $\sum_{k\ge 1}(k!)^{-1}\big\vert\big\vert \int_\mathcal{H}D^k g(x)\,dm(x)\big\vert\big\vert$ is convergent (it will follow that $f$ belongs to the space $\mathcal{X}$). To do this, we expand our entire function $\phi$ as 
\begin{equation}\label{entire}
\phi(z)=\sum_{k\ge 0}\frac{a_k}{k!}z^k. 
\end{equation}
We will need a characterization of functions of exponential type in terms of the coefficients $a_k$. It is a well known result that an entire function written as in \eqref{entire} is of exponential type $\kappa$ if and only if $\overline{\lim}_{n\to +\infty}\vert a_n\vert^{1/n}=\kappa$ (see for instance \cite{B}). In particular, there exists a positive constant $C$ such that for any nonnegative integer $n$, we have
\begin{equation} \label{type}
\vert a_n \vert \le C \tau^n.
\end{equation}
In order to compute the derivatives of $g$, we introduce the symmetric bilinear function $A:\mathcal{H}\times \mathcal{H}\longrightarrow \mathbb{R}$ which is defined by $A(u,v)=\sum_{k\ge 1}\big(\mathfrak{Re}\langle e_k,u\rangle \mathfrak{Re}\langle e_k,v\rangle+\mathfrak{Im}\langle e_k,u\rangle \mathfrak{Im}\langle e_k,v\rangle\big)$. Then it is rather easy to see that the derivatives of $g$ can be written as follows:
\begin{equation*}
D^{2n}g(x)(h_1,\dots, h_{2n})=\sum_{j=0}^{n} 2^{n+j}\mathcal{S}_{2j}(x,h_1,\dots, h_{2n})\sum_{k=0}^{+\infty}\frac{a_{k+n+j}}{k!}\vert\vert x \vert\vert^{2k}
\end{equation*}
where for any $j\in \{0,\dots,n\}$, $\mathcal{S}_{2j}(x,h_1,\dots,h_{2n})$ is the sum of all the terms of the form 
\begin{equation*}
A(x,h_{i_1})\dots A(x,h_{i_{2j}})A(h_{i_{2j+1}},h_{i_{2j+2}})\dots A(h_{i_{2n-1}},h_{i_{2n}})
\end{equation*}
with $\{i_1,\dots,i_{2n}\}=\{1,\dots, 2n\}$ and
\begin{equation*}
D^{2n+1}g(x)(h_1,\dots,h_{2n+1})=\sum_{j=0}^{n}2^{n+j+1}\mathcal{S}_{2j+1}(x,h_1,\dots, h_{2n+1})\sum_{k=0}^{+\infty}\frac{a_{k+n+j+1}}{k!}\vert\vert x \vert\vert^{2k},
\end{equation*}
where for any $j\in \{0,\dots, n\}$, $\mathcal{S}_{2j+1}(x,h_1,\dots, h_{2n+1})$ is the sum of all the terms of the form
\begin{equation*}
A(x,h_{i_1})\dots A(x,h_{i_{2j+1}})A(h_{i_{2j+2}},h_{i_{2j+3}})\dots A(h_{i_{2n}},h_{i_{2n+1}})
\end{equation*}
with $\{i_1,\dots,i_{2n+1}\}=\{1,\dots, 2n+1\}$. In order to estimate $\vert D^n g(x)(h_1,\dots, h_n)\vert$, we need to compute the number of terms which appear in the sums 
$$
\mathcal{S}_{2j}(x,h_1,\dots,h_{2n})\qquad \textrm{and}\qquad \mathcal{S}_{2j+1}(x,h_1,\dots,h_{2n+1})
$$
Since the number of partitions by pairs of a set of $2p$ elements is equal to $\frac{(2p)!}{2^p p!}$, we easily see that there is exactly $\binom{2n}{2j}\frac{(2(n-j))!}{2^{n-j}(n-j)!}$ terms in the sum $\mathcal{S}_{2j}(x,h_1,\dots,h_{2n})$ and $\binom{2n+1}{2j+1}\frac{(2(n-j))!}{2^{n-j}(n-j)!}$ terms in the sum $\mathcal{S}_{2j+1}(x,h_1,\dots,h_{2n+1})$. Then, since $\vert A(u,v)\vert \le 2\, \vert\vert u \vert\vert\, \vert\vert v \vert\vert$, we get for any $h_1,\dots, h_{2n}$ in the closed unit ball $\mathbb{B}$ of $\mathcal{H}$, $\vert\mathcal{S}_{2j}(x,h_1,\dots,h_{2n})\vert \le \binom{2n}{2j}\frac{(2(n-j))!}{2^{n-j}(n-j)!}2^{n+j}\,\vert\vert x \vert\vert^{2j}$ and then by using \eqref{type}:
\begin{align*}
\big\vert D^{2n} g(x)(h_1,\dots, h_{2n})\big\vert &\le\sum_{j=0}^{n}2^{n+j}\binom{2n}{2j}\frac{(2(n-j))!}{2^{n-j}(n-j)!}2^{n+j}\vert\vert x \vert\vert^{2j}\sum_{k=0}^{+\infty}\frac{C\tau^{k+n+j}}{k!}\vert\vert x \vert\vert^{2k}\\
&\le C (2n)!\sum_{j=0}^{n}\frac{(4\tau)^{n+j}}{(n-j)!}\frac{\vert\vert x \vert\vert^{2j}}{(2j)!}e^{\tau \vert\vert x \vert\vert^2}.
\end{align*}
By using the same method, we find that for any $h_1,\dots, h_{2n+1}$ in $\mathbb{B}$,
\begin{equation*}
\big\vert D^{2n+1} g(x)(h_1,\dots,h_{2n+1}) \big\vert\le C (2n+1)!\sum_{j=0}^{n}\frac{(4\tau)^{n+j+1}}{(n-j)!}\frac{\vert\vert x \vert\vert^{2j+1}}{(2j+1)!}e^{\tau \vert\vert x \vert\vert^2}.
\end{equation*}
Then, we have
\begin{align*}
\sum_{n=0}^{+\infty}\frac{\big\vert\big\vert \int_\mathcal{H} D^{2n} g(x)\, dm(x)\big\vert\big\vert}{(2n)!}&\le C\int_\mathcal{H} e^{\tau \vert\vert x \vert\vert^2}\sum_{n=0}^{+\infty}\sum_{j=0}^{n}\frac{(4\tau)^{n+j}}{(n-j)!}\frac{\vert\vert x \vert\vert^{2j}}{(2j)!}\,dm(x)\\
&=C\int_\mathcal{H} e^{\tau \vert\vert x \vert\vert^2}\sum_{j=0}^{+\infty}\frac{(4\tau \vert\vert x \vert\vert)^{2j}}{(2j)!}\bigg(\sum_{n=j}^{+\infty}\frac{(4\tau)^{n-j}}{(n-j)!}\bigg)\,dm(x)\\
&=C e^{4\tau}\int_\mathcal{H}e^{\tau \vert\vert x \vert\vert^2}\sum_{j=0}^{+\infty}\frac{(4\tau \vert\vert x \vert\vert)^{2j}}{(2j)!}\,dm(x),
\end{align*}
and in the same way,
\begin{equation*}
\sum_{n=0}^{+\infty}\frac{\big\vert\big\vert \int_\mathcal{H} D^{2n+1} g(x)\, dm(x)\big\vert\big\vert}{(2n+1)!}\le C e^{4\tau}\int_\mathcal{H}e^{\tau \vert\vert x \vert\vert^2}\sum_{j=0}^{+\infty}\frac{(4\tau \vert\vert x \vert\vert)^{2j+1}}{(2j+1)!}\,dm(x).
\end{equation*}
We finally conclude that
\begin{equation*}
\sum_{n=0}^{+\infty}\frac{\big\vert\big\vert \int_\mathcal{H} D^{n}g(x)\, dm(x)\big\vert\big\vert}{n!}\le C e^{4\tau}\int_\mathcal{H}e^{\tau \vert\vert x \vert\vert^2 +4\tau \vert\vert x \vert\vert}\,dm(x).
\end{equation*}
According to the beginning of the proof, the integral $\int_\mathcal{H}e^{2\tau \vert\vert x \vert\vert^2}\,dm(x)$ is convergent and the function $f$ belongs to the space $\mathcal{X}$.
\end{proof}

\begin{Rem} It is not difficult to see that the space $\mathcal{X}$ is smaller than the whole space $L^2_{\mathbb{R}}(\mathcal{H},\mathcal{B},m)$. Indeed, we can take a function $f$ in $\mathcal{G}$ which is written as in Remark \ref{decompositiong}: $f=\sum_{k\in \mathbb{Z}^*}a_k\, \mathfrak{Re}\langle \mathfrak{e}_k,\cdot \rangle$ where $(a_k)_{k\in \mathbb{Z}^*}$ is a sequence of real numbers such that $\sum_{k\in \mathbb{Z}^*}a_k^2\,\sigma_k^2<+\infty$. It is clear that $f=f_1$ (in the Wiener chaos decomposition of $f$) and that $\vert\vert\mathcal{B}_{f_1}\vert\vert=\vert\vert (a_k)_{k\in \mathbb{Z}^*}\vert\vert_2$ which is not finite if the sequence $(a_k)_{k\in \mathbb{Z}^*}$ does not belong to $\ell_2(\mathbb{Z}^*,\mathbb{R})$. 
\end{Rem}

We finally state and prove our result on the decrease of correlations when we consider a function $f$ in the space $\mathcal{X}$ and a function $g$ in the space $\mathcal{Y}$.
\begin{Theo}\label{theoremefinal}
Let $T\in \mathcal{B}(\mathcal{H})$ be a bounded linear operator on $\mathcal{H}$ whose eigenvectors associated to unimodular eigenvalues are parametrized by a $\mathbb{T}$-eigenvector field $E$ which is $\mu$-spanning and $\alpha$-H\"olderian as in Assumption \ref{assumption}. Then, there exists a positive constant $C'(E)$ such that for any $f\in \mathcal{X}$ and $g\in\mathcal{Y}$, we have
\begin{equation*}
\big\vert \mathcal{I}_n(f,g) \big\vert \le \frac{C'(E)}{n^{\alpha}}\,\vert\vert f \vert\vert_{\mathcal{X}}\, \vert\vert g\vert\vert_{\mathcal{Y}}
\end{equation*}
for any positive integer $n$.
\end{Theo}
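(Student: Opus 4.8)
The plan is to run everything through the Wiener chaos decomposition \eqref{expansion} and reduce the correlation to a sum of the component correlations already estimated in Theorem \ref{correlations}. Write $f=\sum_{k\ge 0}f_k$ and $g=\sum_{\ell\ge 0}g_\ell$, where $f_k=\mathcal{P}_{:\mathcal{G}^k:}f$ and $g_\ell=\mathcal{P}_{:\mathcal{G}^\ell:}g$, both series converging in $L^2(\mathcal{H},\mathcal{B},m)$. First I would observe that, for each fixed $n$, the map $(u,v)\mapsto\mathcal{I}_n(u,v)$ is a bounded bilinear form on $L^2(\mathcal{H},\mathcal{B},m)\times L^2(\mathcal{H},\mathcal{B},m)$: since $T$ is measure preserving, $\vert\mathcal{I}_n(u,v)\vert\le\vert\vert u\circ T^n\vert\vert_{L^2(m)}\,\vert\vert v\vert\vert_{L^2(m)}+\vert\vert u\vert\vert_{L^2(m)}\,\vert\vert v\vert\vert_{L^2(m)}\le 2\,\vert\vert u\vert\vert_{L^2(m)}\,\vert\vert v\vert\vert_{L^2(m)}$. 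Hence one may expand $\mathcal{I}_n(f,g)=\sum_{k,\ell\ge 0}\mathcal{I}_n(f_k,g_\ell)$; by Corollary \ref{nul} every term with $k\ne\ell$ vanishes, the term $k=\ell=0$ cancels against the subtracted product since $f_0=\int f\,dm$ and $g_0=\int g\,dm$, and therefore $\mathcal{I}_n(f,g)=\sum_{k\ge 1}\mathcal{I}_n(f_k,g_k)$.

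For $f\in\mathcal{X}$ and $g\in\mathcal{Y}$ all the multilinear forms $\mathcal{B}_{f_k}$ and $\mathcal{B}_{g_k}$ are bounded, so Theorem \ref{correlations} applies to each summand and yields, for every $n\ge1$,
\begin{equation*}
\big\vert\mathcal{I}_n(f_k,g_k)\big\vert\le k!\,\frac{C(E)\,\pi^{\alpha}}{n^{\alpha}}\,\vert\vert E\vert\vert_2^{2k-1}\,\vert\vert\mathcal{B}_{f_k}\vert\vert\,\vert\vert\mathcal{B}_{g_k}\vert\vert .
\end{equation*}
Summing over $k$, the theorem is reduced to the numerical estimate
\begin{equation*}
\sum_{k\ge 1}k!\,\vert\vert E\vert\vert_2^{2k-1}\,\vert\vert\mathcal{B}_{f_k}\vert\vert\,\vert\vert\mathcal{B}_{g_k}\vert\vert\ \le\ C''(E)\,\vert\vert f\vert\vert_{\mathcal{X}}\,\vert\vert g\vert\vert_{\mathcal{Y}},
\end{equation*}
with $C''(E)$ depending only on $\vert\vert E\vert\vert_2$, after which $C'(E)=C(E)\,\pi^{\alpha}\,C''(E)$ works.

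Establishing this last inequality is where the two classes are used in a complementary way, and it is the step I expect to be the technical heart of the proof. Membership of $g$ in $\mathcal{Y}$ is exactly the statement that $k!\,\vert\vert\mathcal{B}_{g_k}\vert\vert\le\vert\vert g\vert\vert_{\mathcal{Y}}$ for every $k$, i.e. $\vert\vert\mathcal{B}_{g_k}\vert\vert$ decays at least like $\vert\vert g\vert\vert_{\mathcal{Y}}/k!$, which beats any geometric rate; this is precisely what is needed to absorb both the factorial $k!$ and the exponential factor $\vert\vert E\vert\vert_2^{2k}$ produced by the moment bounds behind Theorem \ref{correlations} (Proposition \ref{moments} and Lemma \ref{seriemajoree}). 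Concretely I would factor the general term as $\vert\vert\mathcal{B}_{f_k}\vert\vert\cdot\big(k!\,\vert\vert E\vert\vert_2^{2k-1}\,\vert\vert\mathcal{B}_{g_k}\vert\vert\big)$, bound $k!\,\vert\vert\mathcal{B}_{g_k}\vert\vert$ by $\vert\vert g\vert\vert_{\mathcal{Y}}$ while keeping a further power of the super-geometrically small $\vert\vert\mathcal{B}_{g_k}\vert\vert$ in reserve, and then apply the Cauchy--Schwarz inequality so as to be left, on one side, with $\big(\sum_k\vert\vert\mathcal{B}_{f_k}\vert\vert^2\big)^{1/2}\le\vert\vert f\vert\vert_{\mathcal{X}}$ and, on the other, with a convergent numerical series (comparable to $\sum_k\vert\vert E\vert\vert_2^{2k}/k!=e^{\vert\vert E\vert\vert_2^2}$) times $\vert\vert g\vert\vert_{\mathcal{Y}}$.

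The two points requiring care are, first, the legitimacy of the term-by-term expansion $\mathcal{I}_n(f,g)=\sum_k\mathcal{I}_n(f_k,g_k)$, which rests on the $L^2$-continuity of $\mathcal{I}_n$ noted above together with the $L^2$-convergence of the Wiener chaos series and Corollary \ref{nul}; and, second, the summation of the component bounds, where the whole growth $k!\,\vert\vert E\vert\vert_2^{2k-1}$ must be transferred onto the $\mathcal{B}_{g_k}$ factor — where the definition of $\mathcal{Y}$ annihilates it — so that only a plain square-summable series in $\vert\vert\mathcal{B}_{f_k}\vert\vert$, controlled by $\vert\vert f\vert\vert_{\mathcal{X}}$, survives. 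This is exactly why the classes $\mathcal{X}$ and $\mathcal{Y}$ were defined with these particular conditions.
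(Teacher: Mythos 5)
Your overall route is exactly the paper's: expand $f$ and $g$ in Wiener chaoses, use Corollary \ref{nul} to reduce to the diagonal $\mathcal{I}_n(f,g)=\sum_{k\ge 1}\mathcal{I}_n(f_k,g_k)$, apply Theorem \ref{correlations} termwise, and then sum using the definitions of $\vert\vert\cdot\vert\vert_{\mathcal{X}}$ and $\vert\vert\cdot\vert\vert_{\mathcal{Y}}$ together with the Cauchy--Schwarz inequality. Your justification of the term-by-term expansion via the $L^2$-boundedness of $\mathcal{I}_n$ is fine (and more explicit than the paper, which simply invokes the orthogonality of the spaces $:\mathcal{G}^j:$).

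The gap is in the final summation. The condition $g\in\mathcal{Y}$ gives exactly $\vert\vert\mathcal{B}_{g_k}\vert\vert\le\vert\vert g\vert\vert_{\mathcal{Y}}/k!$, and since the general term contains only a single power of $\vert\vert\mathcal{B}_{g_k}\vert\vert$, once you have used it to cancel the $k!$ coming from Theorem \ref{correlations} there is \emph{nothing left in reserve}: you cannot extract ``a further power of the super-geometrically small $\vert\vert\mathcal{B}_{g_k}\vert\vert$'' from the same single factor. What actually remains is
\begin{equation*}
\sum_{k\ge 1}k!\,\vert\vert E\vert\vert_2^{2k-1}\,\vert\vert\mathcal{B}_{f_k}\vert\vert\,\vert\vert\mathcal{B}_{g_k}\vert\vert\le \vert\vert g\vert\vert_{\mathcal{Y}}\sum_{k\ge 1}\vert\vert E\vert\vert_2^{2k-1}\,\vert\vert\mathcal{B}_{f_k}\vert\vert\le \vert\vert g\vert\vert_{\mathcal{Y}}\,\vert\vert f\vert\vert_{\mathcal{X}}\Big(\sum_{k\ge 1}\vert\vert E\vert\vert_2^{4k-2}\Big)^{1/2},
\end{equation*}
so the numerical series you are left with is the geometric series $\sum_{k\ge 1}\vert\vert E\vert\vert_2^{4k-2}$, not something comparable to $\sum_k\vert\vert E\vert\vert_2^{2k}/k!=e^{\vert\vert E\vert\vert_2^2}$. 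This geometric series converges only when $\vert\vert E\vert\vert_2<1$, which is an extra normalization that must be addressed; the paper disposes of it with a (terse) remark that one may rescale the eigenvector field so that $\sum_{k\ge 1}\vert\vert E\vert\vert_2^{4k-2}<\infty$, and sets $C'(E)=C(E)\pi^{\alpha}\big(\sum_{k\ge 1}\vert\vert E\vert\vert_2^{4k-2}\big)^{1/2}$. As written, your argument hides this convergence issue behind an incorrect accounting of the $1/k!$ decay, so the concluding estimate does not follow from the steps you give; the fix is either the paper's normalization of $E$ or an explicit hypothesis ensuring $\vert\vert E\vert\vert_2<1$.
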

\begin{proof}
We consider the Wiener chaos decomposition of the functions $f\in \mathcal{X}$ and $g\in \mathcal{Y}$, that is $f=\sum_{k\ge 0}f_k$ and $g=\sum_{k\ge 0}g_k$, where $f_k, g_k$ belong to the space $:\mathcal{G}^k:$. Since the spaces $:\mathcal{G}^j:$ are orthogonal, we know that for any positive integer $n$, we have
\begin{equation*}
\mathcal{I}_n(f,g)=\sum_{k=1}^{+\infty}\mathcal{I}_n(f_k,g_k).
\end{equation*}
The functions $f_k$ and $g_k$ belong to the same space $:\mathcal{G}^k:$, so we can apply Theorem \ref{correlations}. Hence, by using the triangle inequality, we get
\begin{align*}
\big\vert \mathcal{I}_n(f,g) \big\vert &\le \sum_{k=1}^{+\infty}\big\vert \mathcal{I}_n(f_k,g_k)\big\vert \le \frac{C(E)\,\pi^{\alpha}}{n^{\alpha}}\sum_{k=1}^{+\infty}k!\,\vert\vert E \vert\vert_2^{2k-1}\,\vert\vert \mathcal{B}_{f_k}\vert\vert\,\vert\vert \mathcal{B}_{g_k}\vert\vert\\
&\le \frac{C(E)\,\pi^{\alpha}}{n^{\alpha}}\bigg(\sum_{k=1}^{+\infty}\vert\vert E \vert\vert_2^{4k-2}\bigg)^{1/2}\vert\vert f \vert\vert_\mathcal{X}\, \vert\vert g \vert\vert_\mathcal{Y}
\end{align*}
where the series $\sum_{k\ge0}\vert\vert E \vert\vert_2^{4k-2}$ can always be assumed to be convergent by taking, if necessary, a smaller H\"older constant $C(E)$. Then the result follows with the constant
$$
C'(E):=C(E)\pi^{\alpha}\Big(\sum_{k\ge 1}\vert\vert E \vert\vert_2^{4k-2}\Big)^{1/2}.
$$
\end{proof}

By using the examples of functions we find in the spaces $\mathcal{X}$ and $\mathcal{Y}$ in Proposition \ref{propositionexemples}, we can for instance deduce from Theorem \ref{theoremefinal} the following corollary.
\begin{Cor}
Let $T\in \mathcal{\mathcal{H}}$ whose eigenvectors associated to unimodular eigenvalues are parametrized by a $\mathbb{T}$-eigenvector field $E$ which is $\mu$-spanning and $\alpha$-H\"olderian as in Assumption \ref{assumption}. For any entire function $\phi:\mathbb{C}\longrightarrow \mathbb{C}$ of exponential type $\kappa<(2\vert\vert E \vert\vert_2^2)^{-1}$ and any polynomial $p$ in the variables $\mathfrak{Re}\langle \mathfrak{e}_i,\cdot\rangle$, we have
\begin{equation*}
\big\vert \mathcal{I}_n\big(\phi(\vert\vert \cdot \vert\vert^2),p\big)\big\vert \le \frac{C'(E)}{n^\alpha}\big\vert\big\vert \mathfrak{Re}\big(\phi\circ\vert\vert\cdot\vert\vert^2\big) \big\vert\big\vert_\mathcal{X}\,\vert\vert p \vert\vert_\mathcal{Y}
\end{equation*}
for any positive integer $n$, where the positive constant $C'(E)$ appears in Theorem \ref{theoremefinal}.
\end{Cor}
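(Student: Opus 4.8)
The plan is to read the statement off directly from Theorem~\ref{theoremefinal} combined with the membership results collected in Proposition~\ref{propositionexemples}. Set $f:=\mathfrak{Re}\big(\phi\circ\vert\vert\cdot\vert\vert^{2}\big)$. Since $\phi$ is entire of exponential type $\kappa<(2\vert\vert E\vert\vert_{2}^{2})^{-1}$, assertion $(iii)$ of Proposition~\ref{propositionexemples} applies and gives $f\in\mathcal{X}$ with $\vert\vert f\vert\vert_{\mathcal{X}}<+\infty$. On the other side, the real polynomial $p$ in the variables $\mathfrak{Re}\langle\mathfrak{e}_{i},\cdot\rangle$ lies in $\mathcal{Y}$ by assertion $(i)$ of the same proposition, and $\vert\vert p\vert\vert_{\mathcal{Y}}<+\infty$.

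With these two memberships in hand I would apply Theorem~\ref{theoremefinal} to the pair $(f,p)\in\mathcal{X}\times\mathcal{Y}$, which produces the constant $C'(E)$ and the inequality $\vert\mathcal{I}_{n}(f,p)\vert\le C'(E)\,n^{-\alpha}\,\vert\vert f\vert\vert_{\mathcal{X}}\,\vert\vert p\vert\vert_{\mathcal{Y}}$ for every $n\ge 1$. If the Taylor coefficients of $\phi$ are real, then $\phi(\vert\vert x\vert\vert^{2})$ is a real number for every $x\in\mathcal{H}$, so $\phi(\vert\vert\cdot\vert\vert^{2})=f$ and the displayed estimate is exactly the desired conclusion. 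For a general complex $\phi$ I would write $\phi=\phi_{1}+i\phi_{2}$ with $\phi_{1}(z)=\frac{1}{2}\big(\phi(z)+\overline{\phi(\bar z)}\big)$ and $\phi_{2}(z)=\frac{1}{2i}\big(\phi(z)-\overline{\phi(\bar z)}\big)$, both entire with real coefficients and of exponential type at most $\kappa$; since $\vert\vert x\vert\vert^{2}$ is real, $\phi(\vert\vert\cdot\vert\vert^{2})=\phi_{1}\circ\vert\vert\cdot\vert\vert^{2}+i\,\phi_{2}\circ\vert\vert\cdot\vert\vert^{2}$ with $\phi_{1}\circ\vert\vert\cdot\vert\vert^{2}=f$ and $\phi_{2}\circ\vert\vert\cdot\vert\vert^{2}=\mathfrak{Im}\big(\phi\circ\vert\vert\cdot\vert\vert^{2}\big)$, each again in $\mathcal{X}$ by Proposition~\ref{propositionexemples}$(iii)$. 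Using the linearity of $\mathcal{I}_{n}$ in its first argument and the triangle inequality then reduces everything to the real-coefficient case, up to harmlessly enlarging the constant $C'(E)$.

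I do not anticipate any real difficulty here: the whole content is already contained in Theorem~\ref{theoremefinal} and Proposition~\ref{propositionexemples}, and the single point deserving a line of justification is the passage from $\phi(\vert\vert\cdot\vert\vert^{2})$ to its real part $f$ when $\phi$ is permitted to take complex values, which is dealt with by the elementary real/imaginary splitting just described.
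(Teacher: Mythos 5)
Your proposal is correct and follows exactly the route the paper intends: membership of $\mathfrak{Re}\big(\phi\circ\vert\vert\cdot\vert\vert^{2}\big)$ in $\mathcal{X}$ via Proposition \ref{propositionexemples}$(iii)$, membership of $p$ in $\mathcal{Y}$ via Proposition \ref{propositionexemples}$(i)$, and a direct application of Theorem \ref{theoremefinal}. Your extra paragraph on splitting a complex-valued $\phi$ into real and imaginary parts is a reasonable (and slightly more careful) reading of the statement, which the paper glosses over by tacitly identifying $\phi(\vert\vert\cdot\vert\vert^{2})$ with its real part.
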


\begin{Rem}
In a more general situation, the $\mathbb{T}$-eigenvectors of $T\in \mathcal{B}(\mathcal{H})$ are parametrized by a countable family $(E_i)_{i\in I}$ of $\mathbb{T}$-eigenvector fields (see Remark \ref{plusieurs} for the definitions of the operators $K_{E_i}$ and $K$). Under the stronger assumption that the series $\sum_{i\in I}\alpha_i^2\vert\vert E_i\vert\vert_2$ is convergent and that the $\mathbb{T}$-eigenvector fields are H\"olderian with the same H\"older exponent $\alpha\in (0,1]$ (and with the same H\"older constant $C(E_i):=C$), one can easily proves that Proposition \ref{functional} remains true, that is the sequence $(\langle RT^{*n}x,y\rangle)_{n\in \mathbb{N}}$ is convergent to zero with speed $n^{-\alpha}$ (with the constant $C\pi^{\alpha}\sum_{i\in I}\alpha_i^2\vert\vert E_i\vert\vert_2$ instead of the constant $C(E,\alpha)$). Then, it is not difficult to prove that Lemma \ref{norm1} and Lemma \ref{seriemajoree} can be proved in this context and then that we have the same kind of result that Theorem \ref{theoremefinal}.
\end{Rem}

\subsection{Analytic $\mathbb{T}$-eigenvector fields}

In many cases (see for instance Example \ref{analytic} or various examples given in the book \cite{BM} on composition operators), a stronger regularity assumption on our $\mathbb{T}$-eigenvector field holds true: the $\mathbb{T}$-eigenvector field $E$ is a vector-valued analytic function in a neighbourhood of $\mathbb{T}$. In this case, the convergence to zero of the correlations $\mathcal{I}_n(f,g)$ is exponential when the functions $f$ and $g$ belong to the spaces $\mathcal{X}$ and $\mathcal{Y}$ respectively. More precisely:
\begin{Theo}\label{theoanalytic}
Let $T\in \mathcal{B}(\mathcal{H})$ be a bounded linear operator on $\mathcal{H}$ whose eigenvectors associated to unimodular eigenvalues are parametrized by a $\mathbb{T}$-eigenvector field $E$ which is $\mu$-spanning and analytic. Then there exists $0<t<1$ and a positive constant $D(E)$ such that for any $f\in \mathcal{X}$ and $g\in \mathcal{Y}$, 
$$
\vert \mathcal{I}_n(f,g)\vert\le D(E)\, t^n\, \vert\vert f \vert\vert_{\mathcal{X}}\, \vert\vert g \vert\vert_{\mathcal{Y}}
$$ 
for any positive integer $n$.
\end{Theo}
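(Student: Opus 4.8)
The plan is to re-run the chain of estimates that produced Theorem~\ref{theoremefinal}, replacing the polynomial rate $n^{-\alpha}$ (which came from the H\"older hypothesis via Proposition~\ref{functional} and Lemma~\ref{norm1}) by a geometric rate $t^n$ coming from analyticity; the two Fourier-analytic lemmas are the only places where the regularity of $E$ was actually used, so they are the only steps that have to be redone. First I would fix the annulus of analyticity: since $E$ is analytic in a neighbourhood of $\mathbb{T}$, it extends to an $\mathcal{H}$-valued holomorphic function on some annulus $\{1/\rho<|z|<\rho\}$ with $\rho>1$, with Laurent expansion $E(z)=\sum_{k\in\mathbb{Z}}c_k z^k$, $c_k\in\mathcal{H}$, and $\sum_k\|c_k\|^2 s^{2k}<+\infty$ for $1/\rho<s<\rho$. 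For $\lambda\in\mathbb{T}$ one has $\overline{\langle y,E(\lambda)\rangle}=\sum_k\overline{\langle y,c_k\rangle}\lambda^{-k}$, a Laurent series converging on the same annulus, so the density $\lambda\mapsto\langle x,E(\lambda)\rangle\,\overline{\langle y,E(\lambda)\rangle}$ of $\mu_{x,y}$ extends to a holomorphic function on an annulus containing $\mathbb{T}$. Choosing any $t\in(1/\rho,1)$, pushing the contour in the integral representation~\eqref{representation} onto the circle $|\lambda|=t$, and estimating $\|E(\lambda)\|$ and $\|\sum_k\overline{c_k}\lambda^{-k}\|$ on that circle by Cauchy--Schwarz yields the analytic substitute for Proposition~\ref{functional}:
\begin{equation*}
\big\vert\langle RT^{*n}x,y\rangle\big\vert\le D_0(E)\,t^{n}\,\vert\vert x\vert\vert\,\vert\vert y\vert\vert
\end{equation*}
for all $x,y\in\mathcal{H}$ and $n\ge1$, with $D_0(E)>0$ depending only on $E$ and $t$.

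Next I would redo Lemma~\ref{norm1} in the same spirit. The $\mathcal{H}$-valued Fourier coefficient $c_n^{(\ell)}=\int_{\mathbb{T}}\lambda^{n}\overline{\langle e_\ell,E(\lambda)\rangle}E(\lambda)\,d\mu(\lambda)$ is the $(-n)$-th Laurent coefficient of an $\mathcal{H}$-valued function holomorphic on the annulus, so shifting to $|\lambda|=t$ and then imitating the Bessel/Plancherel argument of Lemma~\ref{norm1} — the orthogonality of $(\overline{\langle e_\ell,E(\cdot)\rangle})_\ell$, available on $\mathbb{T}$, being replaced by integrating $\|\cdot\|^2$ over the circle $|\lambda|=t$ and using orthogonality of the exponentials $e^{ik\theta}$ — gives $\sum_{\ell\ge1}\|c_n^{(\ell)}\|^2\le D_1(E)\,t^{2n}$, hence the analytic versions of Lemma~\ref{norm1} and Corollary~\ref{ecriture2}:
\begin{equation*}
\mathcal{I}_n\big(\vert\vert\cdot\vert\vert^2,\vert\vert\cdot\vert\vert^2\big)\le D_1(E)\,t^{2n},\qquad \sum_{k\ge1}\sigma_k^4\,\vert\vert T^ne_k\vert\vert^2\le\tfrac14\,D_1(E)\,t^{2n}.
\end{equation*}
The remaining ingredient, Lemma~\ref{seriemajoree} (i.e. $\sum_{k\ge1}\sigma_k^2\vert\vert T^ne_k\vert\vert^2\le\frac12\vert\vert E\vert\vert_2^2$), is already uniform in $n$ and stays untouched. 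Feeding these two displayed bounds into the Cauchy--Schwarz estimates in the proof of Theorem~\ref{correlations} word for word — and using~\eqref{finite2} and~\eqref{finitesum1} exactly as there — produces, for $f_k,g_k\in{:}\mathcal{G}^{k}{:}$ with $\mathcal{B}_{f_k},\mathcal{B}_{g_k}$ bounded,
\begin{equation*}
\big\vert\mathcal{I}_n(f_k,g_k)\big\vert\le k!\,D_2(E)\,\vert\vert E\vert\vert_2^{2k-2}\,t^{n}\,\vert\vert\mathcal{B}_{f_k}\vert\vert\,\vert\vert\mathcal{B}_{g_k}\vert\vert .
\end{equation*}

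To conclude, I would sum over the Wiener chaos decomposition, $\mathcal{I}_n(f,g)=\sum_{k\ge1}\mathcal{I}_n(f_k,g_k)$, exactly as in the proof of Theorem~\ref{theoremefinal}: using $k!\,\vert\vert\mathcal{B}_{g_k}\vert\vert\le\vert\vert g\vert\vert_{\mathcal{Y}}$ for $g\in\mathcal{Y}$, then the Cauchy--Schwarz inequality over $k$ against $\sum_k\vert\vert\mathcal{B}_{f_k}\vert\vert^2<+\infty$ for $f\in\mathcal{X}$, and finally — after rescaling $E$ if necessary so that $\vert\vert E\vert\vert_2<1$, exactly as in Theorem~\ref{theoremefinal} — summing the geometric series $\sum_{k\ge1}\vert\vert E\vert\vert_2^{4k-4}$, one arrives at $\vert\mathcal{I}_n(f,g)\vert\le D(E)\,t^{n}\,\vert\vert f\vert\vert_{\mathcal{X}}\,\vert\vert g\vert\vert_{\mathcal{Y}}$ with $D(E):=D_2(E)\big(\sum_{k\ge1}\vert\vert E\vert\vert_2^{4k-4}\big)^{1/2}$, which is the assertion.

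The genuinely new work is confined to the two Fourier-analytic lemmas, namely the analytic versions of Proposition~\ref{functional} and Lemma~\ref{norm1}. I expect the main obstacle to be making the contour shift rigorous for $\mathcal{H}$-valued Laurent series (handling the anti-holomorphic factor $\overline{\langle y,E(\cdot)\rangle}$, and controlling the relevant suprema on the circle $|\lambda|=t$ uniformly in the data) and, in the second lemma, recovering the summability over $\ell$ that the original proof extracted from orthogonality on $\mathbb{T}$ but which must now be obtained on a circle of radius $t<1$. Everything downstream is a line-by-line repetition of the proofs of Theorems~\ref{correlations} and~\ref{theoremefinal} with $n^{-\alpha}$ systematically replaced by $t^{n}$.
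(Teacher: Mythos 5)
Your proposal is correct and follows essentially the same route as the paper: the paper likewise observes that the regularity of $E$ enters only through Proposition \ref{functional} and Lemma \ref{norm1}, replaces both by geometric bounds extracted from the series expansion of the analytic field, and then reuses Theorem \ref{correlations} and the proof of Theorem \ref{theoremefinal} verbatim with $n^{-\alpha}$ replaced by $t^n$. The only (harmless) difference is in how the geometric decay of the Fourier coefficients is obtained: the paper writes $E(\lambda)=\sum_{n\ge 0}c_n\lambda^n e_n$ with $\vert c_n\vert\le Mt^n$ and multiplies coefficients directly, whereas your Laurent-expansion/contour-shift argument handles a general analytic $E$ and is, if anything, slightly more general.
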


Indeed, the two only results where we use the fact the the $\mathbb{T}$-eigenvector field $E$ is \textit{regular} (H\"olderian or analytic) is Proposition \ref{functional} and Lemma \ref{norm1}. Assume that $E: \mathbb{T}\longrightarrow \mathcal{H}$ is a vector-valued analytic function (in a neighbourhood of $\mathbb{T}$). Then there exists a sequence of complex numbers $(c_n)_{n\in \mathbb{Z}_+}$ such that $E(\lambda)=\sum_{n\ge 0}c_n\lambda^n e_n$ and there exist two constants $t\in (0,1)$ and $M>0$ such that $\vert c_n\vert\le Mt^n$ for every nonnegative integer $n$. For instance, if one rewrite the proof of Proposition \ref{functional}, then he gets
\begin{align*}
\big\vert\langle RT^{*n}x,y\rangle\big\vert &=\bigg\vert\int_\mathbb{T}\lambda^n \langle x,E(\lambda)\rangle\overline{\langle y,E(\lambda)\rangle}\,d\mu(\lambda)\bigg\vert\\
&=\Bigg\vert\sum_{q=0}^{+\infty}c_{q+n}\overline{c_q}\langle x,e_{q+n}\rangle\overline{\langle y,e_q\rangle}\Bigg\vert\\
&\le M^2 t^n\vert\vert x\vert\vert\,\vert\vert y\vert\vert
\end{align*}
by using the Cauchy-Schwarz inequality and since $0<t<1$. Then we obtain the same kind of result as in Proposition \ref{functional} and the correlations decrease to zero with exponential speed. If we do the same thing with Lemma \ref{norm1}, we find that the correlations $\mathcal{I}_n\big(\vert\vert\cdot\vert\vert^2,\vert\vert\cdot\vert\vert^2\big)$ decrease to zero with exponential speed $t^n$. With these two results, we can prove Theorem \ref{theoanalytic} in the context of analytics $\mathbb{T}$-eigenvector fields.

\vspace*{0.5cm}

\noindent\emph{Acknowledgement}: I am grateful to the referee for many valuable suggestions on the presentation of the paper. In particular, I thank the referee for providing me with a complete answer to Question \ref{question}. I am also grateful to my advisor, Sophie Grivaux, for helpful discussions on this subject of speed of mixing.

\nocite{*}

\noindent \textit{Vincent Devinck}

\noindent \textit{Laboratoire Paul Painlev\'e}

\noindent \textit{UMR 8524}

\noindent \textit{Universit\'e des Sciences et Technologies de Lille}

\noindent \textit{Cit\'e Scientifique}

\noindent \textit{59655 Villeneuve d'Ascq cedex}

\noindent \textit{France}

\noindent \texttt{devinck.vincent@gmail.com}

\end{document}